\theoremstyle{plain}
\newtheorem{theorem}{Theorem}
\newtheorem{proposition}[theorem]{Proposition}
\newtheorem{definition}[theorem]{Definition}
\newtheorem{lemma}[theorem]{Lemma}
\def\mos{m}
\def\hum{h}
\def\MOS{M}
\def\HUM{H}
\def\controlm{u_m}
\def\campo{(g_{\mos},g_{\hum})}
\def\xx{\mathbf{x}}
\def\yy{\mathbf{y}}
\def\ff{\mathbf{f}}
\def\hamiltonian{\mathscr{L}}
\def\solucion{\mathscr{\HUM}}
\def\nucleo{\mathbb{V}(\overline{H})}
\def\constraintset{\mathbb{V}^{0}(\overline{H})}
\def\VV{\mathbb{V}}
\def\RR{\mathbb{R}}
\def\lb#1{\underline{#1}}  
\def\ub#1{\overline{#1}}  
\def\dub#1{\overline{\overline{#1}}}
\def\rectangle{[0, 1]\times [0, \overline{H}]}
\def\eqsepv{\; , \enspace}       
\def\eqfinv{\; ,}                
\def\eqfinp{\; .}
\newcommand{\bp}[1]{\big(#1\big)}                           
\title{Viable Control of an Epidemiological Model}
\def\email#1{#1}
\def\keywords#1{\textbf{Keywords:} #1}
\author{ Michel De Lara\footnote{%
Universit\'e Paris-Est, CERMICS (ENPC), 6-8 Avenue Blaise Pascal,
Cit\'e Descartes, 77455 Marne-la-Vall\'ee, France,
\email{delara@cermics.enpc.fr}}           
 \and
Lilian Sofia Sepulveda Salcedo \footnote{%
Universidad Aut\'onoma de Occidente, 
Km. 3 v\'ia Cali-Jamund\'i, Cali, Colombia,
\email{lssepulveda@uao.edu.co}}
}
\date{\today}
\begin{document}
\maketitle

\def\controlm{u}

\begin{abstract}
In mathematical epidemiology, epidemic control often aims at driving
the number of infected individuals to zero, asymptotically.
However, during the transitory phase, the 
number of infected can peak at high values. 
In this paper, we consider mosquito vector control in 
the Ross-Macdonald epidemiological model,
with the goal of capping the proportion of infected by dengue at the peak. 
We formulate this problem as one of control of a dynamical system
under state constraint. 
We allow for time-dependent fumigation rates to reduce the population
of mosquito vector, in order to maintain the proportion of infected 
individuals by dengue below a threshold for all times.
The so-called viability kernel is the set of initial states 
(mosquitoes and infected individuals) for which  
such a fumigation control trajectory exists.
Depending on whether the cap on the 
proportion of infected is low, high or medium,
we provide different expressions of the viability kernel.
We also characterize so-called viable policies that produce, 
at each time, a fumigation rate as a function of 
current proportions of infected humans and mosquitoes,
such that the proportion of infected humans remains 
below a threshold for all times. 
We provide a numerical application in the case of control
of a dengue outbreak in 2013 in Cali, Colombia.  

\end{abstract}

\keywords{epidemiology; control theory; viability theory;
Ross-Macdonald model; dengue.}

\tableofcontents
\pagebreak 

 \section{Introduction}

We consider vector control of the spread of an epidemic
transmitted by vector in a Ross-Macdonald model.
Our approach aims at controlling the number infected at the peak:
we look for a trajectory of time-dependent vector mortality rates 
that is able to maintain the number of infected 
individuals below a threshold for all times.
This approach differs from the widespread stationary control
strategies, based upon having control reproductive number stricly less
than one  to ensure convergence, 
and also from cost minimization optimal control ones.

Indeed, many studies on mathematical modeling of infectious diseases 
consist of analyzing the stability of the equilibria of a  
differential system (behavioral models such as SIR, SIS, SEIR
\cite{BrauerCastillo2006}).  
Those studies focus on asymptotic behavior and stability, 
generally leaving aside the transient behavior of the system, 
where the infection can reach high levels. 
In  many epidemiological models, a significant quantity is 
the ``basic reproductive number'' ${\cal R}_0$ 
which depends on parameters such as the transmission rate,
the death and birth rate, etc. 
Numerous works (see references in 
\cite{Hethcote:2000,Diekmann-Heesterbeek:2000})
exhibit conditions on ${\cal R}_0$ such that
the number of infected individuals tends towards zero. 
With this tool, different (time-stationary) 
management strategies of the propagation 
of the infection -- quarantine, vaccination, etc. -- are compared 
with respect to how they modify ${\cal R}_0$.
Thus, strategies are compared as to their capacity to drive 
the number of infected towards zero, focusing on asymptotics.
However, during the transitory phase, the 
number of infected can peak at high values. 

Other works deal with the whole trajectory, as in 
dynamic optimization where strategies 
are compared with respect to intertemporal costs and benefits
 \cite{Hethcote:1973}, \cite{Longini-Ackerman-Elveback:1978},
\cite{Hethcote:1988}, 
\cite{Kirschner-Lenhart-Serbin:1997},
\cite{Culshaw-Ruan-Spiteri:2004}, etc.
More recently, \cite{Hansen-Day:2010} studies controls that minimize 
the outbreak size (or infectious burden) under the assumption 
that there are limited control resources. 

Our approach focuses both on transitories and asymptotics,
in a robust way. Instead of aiming at an equilibrium or 
optimizing, we look for policies
able to maintain the infected individuals 
below a threshold for all times. 
To our knowledge, this approach is new in mathematical epidemiology.
We have only found it mentioned in passing in \cite{Hethcote:1973} as a
constraint -- bounding above the maximum number infected at the peak --
in a dynamic optimization problem, solved numerically.

In this paper, we formulate the problem as one of \emph{viability}.
In a nutshell, viability theory examines when and how can the state of a 
control system can be maintained within a given region for all times
\cite{Aubin1991}. 
Such problems of dynamic control under constraints also refer 
to invariance issues \cite{Clarkeetal:1995}.
In the control theory literature, problems of constrained control 
lead to the study of positively invariant sets, 
particularly ellipsoidal and polyhedral ones for linear systems 
(see\cite{Bitsoris:1988}, \cite{Gilbert-Tan:1991}, \cite{Gutman-Cwikel:1986} 
and the survey paper \cite{Blanchini:1999});
reachability of target sets or tubes for nonlinear discrete time
dynamics is examined in~\cite{bertsekas71minimax}.
In continuous time, 
such a viability approach has been applied to models related to 
the sustainable management of fisheries \cite{Bene-Doyen-Gabay:2001}, 
to viable strategies to ensure survival of some species \cite{Bonneuil-Saint-Pierre:2005}, to secure the prey predator system \cite{Bonneuil-Mullers:1997}, 
etc.
In discrete-time, different examples can be found in \cite{DeLara-Doyen:2008} 
for sustainable management applications of viability. 
\medskip

The paper is organized as follows. 
In Section~\ref{sec:planteamiento-problema}, 
we state the viability problem 
for the Ross-Macdonald model with control on the mosquito population
by requiring to keep the proportion of infected humans below 
a given threshold. The \emph{viability kernel} is the set of
initial states such that there exists at least one 
fumigation control trajectory such that the resulting 
proportion of infected humans remains below 
a given threshold for all times.
In Section~\ref{sec:caracterizacion-nucleo}, we provide
a characterization of the viability kernel depending on 
whether the cap for the proportion of infected humans 
is low, high or medium. 
We also discuss \emph{viable controls}.
We apply our theoretical results to the case of the dengue outbreak in 2013 in
Cali, Colombia. Thanks to numerical data provided by the
Municipal Secretariat of Public Health of Cali, we provide figures 
of viability kernels and of viable trajectories.

 \section{The viability problem}
 \label{sec:planteamiento-problema}

First, we present the Ross-MacDonald model. Second, we 
formulate the viability problem, which consists in 
capping the proportion of infected humans using the dynamics 
described by the Ross-MacDonald model. 
Then, we introduce the viability kernel and viability domains.
Finally, we describe viable equilibria, that are part of the
viability kernel. 

\subsection{The Ross-MacDonald model}

Different types of Ross-MacDonald models have been published 
\cite{Smith_et_al2007}. We choose the one in \cite{AndersonMay1992},
where both total populations (humans, mosquitoes) are normalized to~1
and divided between susceptibles and infected. 
The basic assumptions of the model are the following.
\begin{itemize}
  \item[i)] The human population ($N_{\hum}$) and the mosquito population
($N_{\mos}$) are closed and remain stationary.
  \item[ii)] Humans and mosquitoes are homogeneous in terms of 
susceptibility, exposure and contact.
  \item[iii)] The incubation period is ignored, in humans as in mosquitoes.
  \item[iv)] Death induced by the disease is ignored, 
in humans as in mosquitoes.
  \item[v)] Once infected, mosquitoes never recover.
  \item[vi)] Only susceptibles get infected, in humans as in mosquitoes.
  \item[vii)] Gradual immunity in humans is ignored.
\end{itemize}
Let $\mos(t)$ denote the \emph{proportion of infected mosquitoes} at time~$t$,
and $\hum(t)$ the \emph{proportion of infected humans} at time~$t$.
Therefore, $1-\mos(t)$ and $1-\hum(t)$ are the respective proportions 
of susceptibles. 
The Ross-MacDonald model is the following differential system
\begin{subequations}
\label{eq:model-Ross-Macdonald}
  \begin{eqnarray}
  \displaystyle\frac{d\mos}{dt}&=& 
\alpha\, p_{\mos}\,\hum(1-\mos)-\delta\, \mos\eqsepv
\label{eq:model-Ross-Macdonald_a} \\[5mm]
  \displaystyle\frac{d\hum}{dt}&=& 
\alpha\, p_{\hum}\, \frac{N_{\mos}}{N_{\hum}}\, \mos (1-\hum)-\gamma\, \hum\eqsepv
  \end{eqnarray}
\end{subequations}
where the parameters $\alpha$, $p_{\mos},\, p_{\hum}$, 
$\xi=\frac{N_{\mos}}{N_{\hum}}$, 
$\delta$ and $\gamma$ are given in Table~\ref{tabla:parametros-Ross}.

\begin{table}[h]
	\centering
	\begin{tabular}{|p{3cm}|p{9.5cm}|}\hline
		\bf{Parameter} & \bf{Description} \\ \hline \hline
$ \alpha \geq 0 $ & biting rate per time unit \\ \hline
$  \xi=N_{\mos}/N_{\hum} \geq 0 $  & number of female mosquitoes per human 
\\ \hline
$ 1 \geq p_{\hum} \geq 0 $ & probability of infection of a susceptible human \\
& by infected mosquito biting \\ \hline
$ 1 \geq p_{\mos} \geq 0 $ & 
probability of infection of a susceptible mosquito \\
& when biting an infected human \\ \hline
$\gamma \geq 0 $  & recovery rate for humans \\ \hline
$\delta \geq 0 $  & (natural) death rate for mosquitoes \\ \hline
\end{tabular}
\caption{Parameters of the Ross-MacDonald model~\eqref{eq:model-Ross-Macdonald}}
\label{tabla:parametros-Ross}
\end{table}

 \subsection{Capping the proportion of infected humans}
 \label{sec:sin-costos}

We turn the dynamical system~\eqref{eq:model-Ross-Macdonald} into a 
control system by replacing the natural death rate~$\delta$ for mosquitoes
in~\eqref{eq:model-Ross-Macdonald_a} 
by a piecewise continuous function~$\controlm(\cdot):t\rightarrow \controlm(t)\in [\lb{\controlm}, \ub{\controlm}]$, with $\lb{\controlm}=\delta$.
The function~$\controlm(\cdot)$ is the control on mosquito population 
that affects the mortality rate by fumigation of insecticides.
The upper bound~$\ub{\controlm}$ is the maximal fumigation mortality rate. 
For notational simplicity, we put
\begin{equation}
\label{eq:transmision-ross}
 A_{\mos}=\alpha p_{\mos} \eqsepv A_{\hum}=\alpha p_{\hum} \frac{M}{H}
\eqfinp
\end{equation}

Thinking about public health policies set by governmental entities, 
we impose the following constraint:
the proportion~$\hum(t)$ of infected humans 
must always remain below a threshold~$\ub{\HUM}$, where
\begin{equation}
  0< \ub{\HUM} < 1 \eqsepv
\end{equation}
represents the maximum tolerated proportion of infected humans. 

Therefore, the viability problem for the Ross-Macdonald model 
with control on the mosquito population is as follows. 
Given the control system 
\begin{subequations}
  \begin{align}
  \displaystyle\frac{d\mos}{dt}=& 
A_{\mos}\hum(t) (1-\mos(t))-\controlm(t)  \mos(t)  \eqsepv \\[5mm]
  \displaystyle\frac{d\hum}{dt}=& 
A_{\hum} \mos(t) (1-\hum(t))-\gamma  \hum(t) \eqsepv    
  \end{align}
\label{eq:model-control-vector}
\end{subequations}
determine if there exists a piecewise continuous 
control trajectory~$\controlm(\cdot)$ such that 
\begin{equation}
\label{eq:restriccion-control}
\controlm(\cdot):t \mapsto \controlm(t)\eqsepv
\lb{\controlm}\leq\controlm(t)\leq\ub{\controlm} \eqsepv 
\forall t \geq 0 \eqsepv 
\end{equation}
and such that the state trajectory given by~\eqref{eq:model-control-vector}
satisfies the viability constraint
\begin{equation}
\label{eq:restriccion-estado}
\hum(t) \leq \ub{\HUM} \eqsepv \forall t \geq 0 \eqfinp
\end{equation}
\label{eq:problema-viabilidad-recursos-ilimitados}

\subsection{Viability kernel and viability domains}

The solution to problem~\eqref{eq:problema-viabilidad-recursos-ilimitados} 
relies mainly on identifying the initial conditions, 
$(\mos(0), \hum(0))$, for the mosquitoes and infected humans, 
for which there exists a mortality rate due to fumigation, 
$\controlm(\cdot)$ like~\eqref{eq:restriccion-control}, 
such that the solution~\eqref{eq:model-control-vector}
starting from~$(\mos(0), \hum(0))$ 
satisfies~\eqref{eq:restriccion-estado}. 
Such set of initial conditions is called the \emph{viability kernel}.

\begin{definition}
\label{def:nucleo-viabilidad}
The set of initial conditions $(\mos_0, \hum_0)$ for which there 
exists at least one fumigation policy~\eqref{eq:restriccion-control} 
such that the solution to the system~\eqref{eq:model-control-vector},
with initial state $(\mos(0), \hum(0))=(\mos_0, \hum_0)$,
satisfies the constraint~\eqref{eq:restriccion-estado}
is called the \emph{viability kernel}.
We denote the {viability kernel} by $\nucleo$, that is, 
\begin{equation}
\nucleo=\left \{ (\mos_0, \hum_0) \left|
\begin{array}{c}
 \textit{there is  } \controlm(\cdot) 
\textit{as in }~\eqref{eq:restriccion-control} \\
  \textit{such that the solution to }~\eqref{eq:model-control-vector}\\
  \textit{which starts from }(\mos_0, \hum_0)\\
  \textit{satisfies the constraint }~\eqref{eq:restriccion-estado}       
\end{array} \right.
\right\} \eqfinp
\label{eq:nucleo-viabilidad}
\end{equation}
\end{definition}

The {viability kernel} is a subset of the \emph{constraint set}
\begin{equation}
\constraintset =\{(\mos, \hum)|0\leq\mos\leq 1, 0\leq\hum\leq \ub{\HUM} \}
=\rectangle \eqfinv
\label{eq:conjunto_de_restricciones}
\end{equation}
that is,
\begin{equation}
  \nucleo \subset \constraintset \eqfinp
\end{equation}

We define and present a geometric characterization of 
so-called viability domains of system~\eqref{eq:model-control-vector}, 
as they will be an important step to characterize 
the viability kernel.

\begin{definition}
\label{def:dominio-viable}
A subset $\VV$ of the set of states~$[0,1]\times [0,1]$
is said to be a \emph{viability domain} for 
the system~\eqref{eq:model-control-vector}--\eqref{eq:restriccion-control}
if there exists a control trajectory~$\controlm(\cdot)$ as 
in~\eqref{eq:restriccion-control} such that the solution 
to~\eqref{eq:model-control-vector}, which starts from
$(\mos(0), \hum(0))\in \VV$, 
remains within~$\VV$ for every $t\geq 0$.
\end{definition}

Viability domains are related to the viability kernel as follows.
\begin{theorem}[\cite{Aubin1991}]
The viability kernel is the largest viability domain within the constraint 
set.
\label{th:Aubin}
\end{theorem}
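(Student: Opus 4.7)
The plan is to prove two inclusions: \textbf{(a)} the viability kernel $\nucleo$ is itself a viability domain contained in the constraint set $\constraintset$, and \textbf{(b)} any viability domain $\VV$ contained in $\constraintset$ is in turn contained in $\nucleo$. Taken together, these show that $\nucleo$ is the largest such domain. The argument is essentially a flow/time-shift argument and does not depend on the specific form of~\eqref{eq:model-control-vector}; it only uses autonomy of the dynamics and the ``for all $t \geq 0$'' nature of the constraint~\eqref{eq:restriccion-estado}.

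For \textbf{(a)}, I first note that $\nucleo \subset \constraintset$: if $(\mos_0,\hum_0)\in\nucleo$, then evaluating~\eqref{eq:restriccion-estado} at $t=0$ gives $\hum_0 \leq \ub{\HUM}$, and $(\mos_0,\hum_0)$ lies in $[0,1]\times[0,1]$ because the Ross-MacDonald dynamics~\eqref{eq:model-control-vector} confine proportions to this square. To show $\nucleo$ is a viability domain, I pick $(\mos_0,\hum_0)\in\nucleo$, choose a witnessing control $\controlm(\cdot)$ satisfying~\eqref{eq:restriccion-control} whose trajectory $(\mos(\cdot),\hum(\cdot))$ satisfies $\hum(t)\leq\ub{\HUM}$ for all $t\geq 0$, and verify that the whole trajectory stays in $\nucleo$. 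For any $t_0\geq 0$, the shifted control $\controlm_{t_0}(\cdot) := \controlm(t_0 + \cdot)$ is again piecewise continuous and valued in $[\lb{\controlm},\ub{\controlm}]$; by autonomy of~\eqref{eq:model-control-vector}, the solution starting at $(\mos(t_0),\hum(t_0))$ under $\controlm_{t_0}$ is precisely $(\mos(t_0+\cdot),\hum(t_0+\cdot))$, whose $\hum$-component remains under $\ub{\HUM}$. Hence $(\mos(t_0),\hum(t_0))\in\nucleo$ for every $t_0\geq 0$, establishing that $\nucleo$ is a viability domain.

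For \textbf{(b)}, let $\VV\subset\constraintset$ be any viability domain and take $(\mos_0,\hum_0)\in\VV$. By Definition~\ref{def:dominio-viable} there exists an admissible $\controlm(\cdot)$ such that the trajectory of~\eqref{eq:model-control-vector} starting from $(\mos_0,\hum_0)$ stays in $\VV$ for all $t\geq 0$. Since $\VV\subset\constraintset$, that trajectory satisfies $\hum(t)\leq\ub{\HUM}$ for all $t\geq 0$, so $(\mos_0,\hum_0)\in\nucleo$ by Definition~\ref{def:nucleo-viabilidad}. Thus $\VV\subset\nucleo$.

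The only non-trivial point is the shift argument in \textbf{(a)}, and the one place it could go wrong is in handling piecewise continuity: I would check that the time shift preserves the piecewise-continuity requirement imposed by~\eqref{eq:restriccion-control}, which it does since translating the breakpoints yields an admissible control. Existence and uniqueness of the Carath\'eodory solution to~\eqref{eq:model-control-vector} under such controls (guaranteed by the Lipschitz character of the right-hand side on the compact state square) is what makes the identification $(\mos(t_0+\cdot),\hum(t_0+\cdot))$ with the solution from $(\mos(t_0),\hum(t_0))$ under $\controlm_{t_0}$ rigorous; this is the one technicality I would spell out carefully but expect no genuine obstacle.
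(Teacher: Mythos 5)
Your proof is correct. Note that the paper itself offers no proof of this statement --- it is quoted from Aubin's book --- so there is nothing internal to compare against; your two-inclusion decomposition (the kernel is a viability domain inside $\constraintset$, and every viability domain inside $\constraintset$ sits inside the kernel) together with the time-shift/autonomy argument for the first inclusion is exactly the canonical argument. The one point worth being explicit about, which you handle adequately, is that the state constraint~\eqref{eq:restriccion-estado} only caps $\hum(t)$, so the identification of ``satisfies the constraint for all $t$'' with ``remains in $\constraintset=\rectangle$ for all $t$'' relies on the invariance of the square $[0,1]^2$ under the dynamics~\eqref{eq:model-control-vector}; without that, the kernel as defined by~\eqref{eq:nucleo-viabilidad} would a priori be a viability domain for the larger set $[0,1]\times[0,\ub{\HUM}]$-in-the-$\hum$-coordinate-only rather than for $\constraintset$ itself. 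Your remark on piecewise continuity being preserved under time translation, and on uniqueness of solutions making the shifted trajectory identification rigorous, closes the only technical gaps in the argument.
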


With system~\eqref{eq:model-control-vector}, 
we associate the vector field~$\campo$ given by the two components
\begin{subequations}
\begin{align}
g_{\mos}(\mos, \hum, \controlm) &= \label{eq:campo-control-vector_mos}
A_{\mos} \hum(1-\mos)-\controlm  \mos  \eqfinv \\
g_{\hum}(\mos, \hum) &= A_{\hum}  \mos (1-\hum)-\gamma  \hum \eqfinp
\label{eq:campo-control-vector_hum}
\end{align}
\label{eq:campo-control-vector}
\end{subequations}
The system~\eqref{eq:model-control-vector} is equivalent to
\begin{subequations}
\begin{align}
  \displaystyle\frac{d\mos}{dt}&= 
g_{\mos}\big(\mos(t), \hum(t), \controlm(t)\big) \eqsepv \\[5mm]
  \displaystyle\frac{d\hum}{dt}&=
g_{\hum}\big(\mos(t), \hum(t)\big) \eqfinp
\end{align}
\label{eq:campo-model-control-vector}
\end{subequations}

We now provide a geometric characterization of the viability domains of the 
system~\eqref{eq:campo-model-control-vector}
with control constraints~\eqref{eq:restriccion-control}
using the vector field~$\campo$. 
For this purpose, we first note that 
the system~\eqref{eq:campo-model-control-vector} 
is \emph{Marchaud} \cite{Aubin1991} because: 
\begin{itemize}
\item the constraint~\eqref{eq:restriccion-control} on the controls is 
written as $\controlm \in [\lb{\controlm},\ub{\controlm}]$, 
where $[\lb{\controlm},\ub{\controlm}]$ is closed;
\item the components of the vector field~$\campo$ 
in~\eqref{eq:campo-control-vector} are continuous; 
\item the vector field~$\campo$ and the set $[\lb{\controlm},\ub{\controlm}]$
have linear growth (because the partial derivatives of~$\campo$ are smooth 
and defined over the compact $[0,1]\times[0,1]$);
\item the set $\{ \campo(\mos,\hum,\controlm) \mid \controlm 
\in[\lb{\controlm},\ub{\controlm}] \}$ is convex, for all $(\mos,\hum)$, 
because $g_{\mos}(\mos,\hum,\controlm)$ is linearly dependent 
on the control~$\controlm$.
\end{itemize}

Second, we will use the following result to be found in~\cite{Aubin1991}
(Theorem~6.1.4, p.~203 and the remark p.~200).

\begin{proposition} 
For a Marchaud controlled system, a closed subset $\VV$ is viable if
the tangent cone at any point in $\VV$ contains at least one of the vectors
in the family generated by the vector field at this point 
when the control varies.
\label{pr:Marchaud}
\end{proposition}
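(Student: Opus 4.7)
The plan is to follow the classical Nagumo/Aubin strategy: reformulate the controlled system~\eqref{eq:campo-model-control-vector}--\eqref{eq:restriccion-control} as the differential inclusion $(d\mos/dt,d\hum/dt)\in G(\mos,\hum)$, where $G(\mos,\hum)=\{\campo(\mos,\hum,\controlm)\mid \controlm\in[\lb{\controlm},\ub{\controlm}]\}$, and then prove the tangent-cone characterization for this inclusion. The four Marchaud bullet points listed right before the statement are precisely the hypotheses that make $G$ a well-behaved set-valued map: nonempty closed images, convex values, linear growth, and continuity/upper semicontinuity in $(\mos,\hum)$. Viability of a closed set~$\VV$ is then equivalent to the tangent condition $G(x)\cap T_{\VV}(x)\neq\emptyset$ for all $x\in\VV$, where $T_{\VV}(x)$ is the contingent (Bouligand) cone. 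The proposition as stated is only the sufficiency part, so I will concentrate on that direction.

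The necessity side is the easy warm-up and I would dispatch it first to fix ideas: if a control $\controlm(\cdot)$ produces a trajectory $(\mos(\cdot),\hum(\cdot))$ that stays in $\VV$ and starts at $x\in\VV$, then the difference quotient $\bigl((\mos(t_n),\hum(t_n))-x\bigr)/t_n$ along a sequence $t_n\downarrow 0$ of Lebesgue points of $\controlm(\cdot)$ converges to $\campo(x,\controlm(0))$, which belongs to $T_{\VV}(x)$ by definition of the contingent cone. Hence one of the vectors produced by the control family sits in the tangent cone at every viable point.

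For the sufficiency direction, I would build viable trajectories by an explicit Euler-type scheme with projection-free selection. Given a time step $h>0$ and a current state $x_k\in\VV$, the tangent condition furnishes a control $\controlm_k\in[\lb{\controlm},\ub{\controlm}]$ and a vector $v_k=\campo(x_k,\controlm_k)\in T_{\VV}(x_k)$; by definition of the contingent cone there exists $h_k\in(0,h]$ and $\varepsilon_k\to 0$ such that $x_k+h_k v_k+h_k\varepsilon_k\in\VV$. Iterating yields piecewise constant controls $\controlm_h(\cdot)$ and piecewise affine curves $x_h(\cdot)$ with vertices in $\VV$. Linear growth of $G$ gives uniform bounds and equicontinuity on compact intervals, so Arzel\`a--Ascoli provides a subsequence $x_h(\cdot)\to x(\cdot)$ uniformly on compacts; the closedness of $\VV$ forces $x(t)\in\VV$ for all $t\geq 0$.

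The main obstacle, and where the full Marchaud hypothesis is used, is to show that the limit $x(\cdot)$ is itself a trajectory of the controlled system, i.e.\ that $\dot{x}(t)\in G(x(t))$ almost everywhere. This step requires the convexity of $G(\mos,\hum)$ (guaranteed here by linearity of $g_{\mos}$ in~$\controlm$) together with upper semicontinuity of $G$, which together yield the standard convergence lemma of Filippov--Aubin: weak-$L^1$ limits of selections of $G(x_h(\cdot))$ are still selections of $G(x(\cdot))$. A measurable selection theorem then produces a measurable control $\controlm(\cdot)$ with $\dot{x}(t)=\campo(x(t),\controlm(t))$, and a final approximation argument replaces this measurable control by a piecewise continuous one of the form required in~\eqref{eq:restriccion-control}. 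Once this convergence step is in hand, the proposition follows, and combined with Theorem~\ref{th:Aubin} it will provide the working criterion used in Section~\ref{sec:caracterizacion-nucleo} to identify the viability kernel.
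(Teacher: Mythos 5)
The paper does not actually prove this proposition: it is imported verbatim from \cite{Aubin1991} (Theorem~6.1.4 together with the remark on p.~200), so there is no in-paper argument to compare yours against. Your sketch is, in outline, the standard proof of that theorem --- differential-inclusion reformulation, Euler scheme driven by the contingent-cone condition, Arzel\`a--Ascoli compactness, and the convexity/upper-semicontinuity convergence theorem --- so the route is the right one, and you correctly identify where each of the four Marchaud hypotheses listed in the paper enters. You also correctly note that only the sufficiency direction is claimed.

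Two steps in your sketch are asserted rather than proved, and they are precisely the delicate ones. First, in the Euler construction the step lengths $h_k$ supplied by the definition of the contingent cone are not under your control and may shrink so fast that $\sum_k h_k<T$; the approximate solutions would then fail to be defined on a full time interval. The classical proof handles this by choosing $h_k$ comparable to the supremum of admissible step lengths at $x_k$ (or by a maximality argument), and without some such device the limiting trajectory need not exist beyond a finite horizon. Second, the convergence theorem delivers a limit trajectory with derivative in $G(x(t))$ almost everywhere and hence, via measurable selection, a \emph{measurable} control; your final sentence, replacing it by a piecewise continuous control as required by~\eqref{eq:restriccion-control}, is not a routine approximation. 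A nearby piecewise constant control produces a trajectory that is only uniformly close to $x(\cdot)$, and uniform closeness does not keep it inside the closed set $\VV$. The honest resolution is either to state the proposition for measurable controls (which is what Aubin does, and which suffices for the paper, since the viable controls exhibited later, such as~\eqref{eq:control-viable}, are continuous feedbacks), or to argue separately that for this particular system the kernel is unchanged. As a reconstruction of the cited theorem your proposal is sound in structure; as a self-contained proof it needs these two gaps filled.
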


In our case, we obtain the following geometric characterization 
of viability domains.

\begin{proposition}
\label{prop:geometria-dominio-viables}
Consider a closed subset~$\VV$ of $[0,1]\times [0,1]$. The set~$\VV$ is 
a viability domain for the system~\eqref{eq:model-control-vector}, if, 
whenever $(\mos, \hum)$ varies along the frontier~$\partial\VV$ of 
the set $\VV$,
there is a control $\controlm \in [\lb{\controlm},\ub{\controlm}]$ 
such that $(g_{\mos}(\mos,\hum,\controlm),g_{\hum}(\mos,\hum))$ 
is an inward-pointing vector, with respect to the set~$\VV$.
\end{proposition}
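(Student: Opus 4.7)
The plan is to derive this geometric characterization directly from the general viability theorem for Marchaud systems, namely Proposition~\ref{pr:Marchaud}, using the fact that the Marchaud hypotheses have already been verified for our system~\eqref{eq:campo-model-control-vector}--\eqref{eq:restriccion-control} in the discussion preceding the proposition.

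First, I would recall that Proposition~\ref{pr:Marchaud} states that a closed subset $\VV$ is viable if and only if, at every point $(\mos,\hum)\in\VV$, the (contingent/Bouligand) tangent cone $T_\VV(\mos,\hum)$ contains at least one vector from the set-valued map
\[
 F(\mos,\hum) = \bigl\{ \bp{g_{\mos}(\mos,\hum,\controlm),\, g_{\hum}(\mos,\hum)}
\bigm| \controlm\in[\lb{\controlm},\ub{\controlm}] \bigr\} \eqfinp
\]
So to prove our claim, it suffices to verify this pointwise tangential condition under the hypothesis of the proposition.

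Next, I would split the verification according to whether the point lies in the interior or on the boundary of $\VV$. At every interior point $(\mos,\hum)\in\mathrm{int}\,\VV$, the tangent cone $T_\VV(\mos,\hum)$ equals the whole of $\RR^2$, so the condition $F(\mos,\hum)\cap T_\VV(\mos,\hum)\neq\emptyset$ is automatic since $F(\mos,\hum)$ is nonempty (any admissible $\controlm$ produces a vector). At a boundary point $(\mos,\hum)\in\partial\VV$, the tangent cone contains every inward-pointing vector with respect to $\VV$ (this is the defining feature of the contingent cone for closed sets: directions that locally point into $\VV$ are tangent). Therefore the hypothesis of the proposition, which gives a control $\controlm\in[\lb{\controlm},\ub{\controlm}]$ such that $\bp{g_{\mos}(\mos,\hum,\controlm),g_{\hum}(\mos,\hum)}$ is inward-pointing, produces a vector in $F(\mos,\hum)\cap T_\VV(\mos,\hum)$, as required.

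Combining the two cases, the tangential condition of Proposition~\ref{pr:Marchaud} is met at every point of $\VV$, hence $\VV$ is a viability domain. The main delicate point will be the boundary step: one has to invoke the (standard) fact that for a closed subset of $\RR^2$ the contingent cone at a boundary point contains all inward-pointing directions, which is why the hypothesis is only required to be checked on $\partial\VV$ rather than throughout $\VV$. Once this is articulated, the rest of the argument is a direct reading of Aubin's theorem applied to our Marchaud system.
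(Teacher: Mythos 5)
Your proposal is correct and follows essentially the same route as the paper, which derives this proposition directly from Aubin's viability theorem for Marchaud systems (Proposition~\ref{pr:Marchaud}) after having checked the Marchaud hypotheses, with the reduction to the frontier justified exactly as you say: at interior points the contingent cone is all of $\RR^2$, and at boundary points inward-pointing vectors belong to the (closed) tangent cone. The only cosmetic remark is that Proposition~\ref{pr:Marchaud} is stated in the paper only as a sufficient condition, not an equivalence, but your argument uses only that direction, so nothing is affected.
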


If the closed subset~$\VV$ has a piecewise smooth frontier~$\partial\VV$, 
it suffices --- for the set~$\VV$ to be a viability domain for the 
system~\eqref{eq:model-control-vector} ---
that the scalar product between the vector~$\campo$ and a normal 
(non zero) outward-pointing vector (with respect to the set~$\VV$) 
be lower than or equal to zero.

\subsection{Viable equilibria}
\label{Puntos_de_equilibrio}

Control systems display a family of equilibria, indexed by stationary 
decisions. Within them, the equilibria which satisfy the constraints 
are part of the viability kernel: they are said to be \emph{viable equilibria}.

\subsubsection*{Stationary control: mortality rate due to constant fumigation}
\label{fumigacion-constante}

Consider stationary mosquito control, that is, constant mortality rate due to 
fumigation: 
\begin{equation}
\controlm(t)=u_m \eqsepv \forall t \geq 0 \eqsepv \text{ with }
\lb{\controlm}\leq u_m\leq \ub{\controlm} \eqfinp
\end{equation}
The system~\eqref{eq:model-control-vector} has the disease free
equilibrium~$(0, 0)$ and, possibly, the endemic equilibrium point: 
\begin{equation}
\label{eq:endemico-control}
E^*_{u_m} = (\mos^*,\hum^*) =  \displaystyle \left(
\frac{A_{\mos} - \gamma u_m/A_{\hum} }{ A_{\mos} + u_m }, 
\frac{ A_{\hum} - \gamma u_m/A_{\mos} }{ A_{\hum} + \gamma }
\right )  \eqfinp
\end{equation}
Such point $E^*_{u_m}$ exists in $[0,1]^2$
and has global asymptotic stability when
\begin{equation}
\label{eq:existe-endemico-control}
0 < {A_{\mos} A_{\hum} - \gamma u_m} \eqfinp
\end{equation}
The proof relies on the Poincar\'e-Bendixson theorem,
excluding periodic orbits thanks to 
the Bendixson-Dulac criterion \cite{Wolfgang1998}.

The viable equilibrium points~$(\mos^*, \hum^*)$ are those for
which $\hum^*\leq\ub{\HUM}$. With~\eqref{eq:existe-endemico-control}, 
we deduce that the viable equilibrium points are the 
points~\eqref{eq:endemico-control} for which: 
\begin{equation}
\label{eq:endemico-viable}
0 < \frac{ A_{\hum} - \gamma u_m/A_{\mos} }{ A_{\hum} + \gamma }
\leq\ub{\HUM} \eqfinp
\end{equation}

\subsubsection*{Monotonicity properties}

The system \eqref{eq:model-control-vector} has monotonicity properties 
which will be practical for characterizing of the viability kernel. 

\begin{proposition}
\label{prop-cuasimonotono}
Let $\bp{\ub{\mos}(t), \ub{\hum}(t)}$ be the solution 
to~\eqref{eq:model-control-vector} when $\controlm(t)=\ub{\controlm}$. If
\begin{equation}
\ub{\mos}(0)\leq \mos(0) 
\eqsepv
\ub{\hum}(0)\leq \hum(0) 
\eqfinv  
\end{equation}
we have that 
\begin{equation}
\ub{\mos}(t)\leq\mos(t) 
\eqsepv 
\ub{\hum}(t)\leq\hum(t) 
\eqsepv \forall t > 0 \eqfinp
\end{equation}
\end{proposition}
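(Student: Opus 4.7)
The plan is to establish this as a comparison (Kamke-type) result for a quasi-monotone system in which the vector field depends monotonically on the control. The structural ingredients are already visible in~\eqref{eq:campo-control-vector}: $g_{\mos}$ and $g_{\hum}$ are cooperative in the state, and $g_{\mos}$ is non-increasing in the control.

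First, I would verify that the square $[0,1]\times[0,1]$ is positively invariant under~\eqref{eq:model-control-vector}--\eqref{eq:restriccion-control}, by checking the vector field on each of the four edges: at $\mos=0$ one has $g_{\mos}=A_{\mos}\hum\geq 0$, at $\mos=1$ one has $g_{\mos}=-\controlm\leq 0$, and symmetrically for $\hum$. Consequently both the barred and the unbarred trajectories stay in $[0,1]^{2}$ for all $t\geq 0$. From this, on the whole relevant domain,
\begin{equation}
\frac{\partial g_{\mos}}{\partial \hum}=A_{\mos}(1-\mos)\geq 0\eqsepv
\frac{\partial g_{\hum}}{\partial \mos}=A_{\hum}(1-\hum)\geq 0\eqsepv
\frac{\partial g_{\mos}}{\partial \controlm}=-\mos\leq 0\eqfinp
\end{equation}

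Second, I would set $\varphi(t)=\mos(t)-\ub{\mos}(t)$ and $\psi(t)=\hum(t)-\ub{\hum}(t)$, so that $\varphi(0)\geq 0$ and $\psi(0)\geq 0$ by hypothesis, and I would argue by contradiction. Suppose the set $\{t>0\mid \varphi(t)<0\text{ or } \psi(t)<0\}$ is nonempty; by continuity its infimum~$t^{*}$ satisfies $\varphi(t^{*}),\psi(t^{*})\geq 0$ with at least one equal to zero. If $\varphi(t^{*})=0$, then $\mos(t^{*})=\ub{\mos}(t^{*})$ and a direct computation gives
\begin{equation}
\varphi'(t^{*})=A_{\mos}\,\psi(t^{*})\bp{1-\mos(t^{*})}+\bp{\ub{\controlm}-\controlm(t^{*})}\,\mos(t^{*})\geq 0\eqfinv
\end{equation}
using $\psi(t^{*})\geq 0$, $\mos(t^{*})\in[0,1]$, and $\controlm(t^{*})\leq\ub{\controlm}$. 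If instead $\psi(t^{*})=0$, then similarly
\begin{equation}
\psi'(t^{*})=A_{\hum}\,\varphi(t^{*})\bp{1-\hum(t^{*})}\geq 0\eqfinp
\end{equation}
In either case the derivative at $t^{*}$ is nonnegative, which contradicts the definition of~$t^{*}$ as the first time of violation.

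The main obstacle is that nonnegative is not the same as strictly positive: a derivative of $0$ at $t^{*}$ does not immediately forbid the function from dipping below zero just afterwards. The standard way to close this gap, which I would follow, is a perturbation argument: replace $\bp{\ub{\mos}(0),\ub{\hum}(0)}$ by $\bp{\ub{\mos}(0)-\varepsilon,\ub{\hum}(0)-\varepsilon}$, apply the same first-violation-time reasoning to obtain strict inequalities (since $\ub{\controlm}-\controlm(t)$ may vanish, one propagates the strict gap through continuous dependence of the ODE on initial conditions), and then pass to the limit $\varepsilon\downarrow 0$. An equivalent route is to invoke the classical Kamke comparison theorem for quasi-monotone systems directly, once monotonicity in both state and control is established as above.
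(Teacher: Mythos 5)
Your proposal is correct and follows essentially the same route as the paper: establish that the field $\campo$ is quasi-monotone (cooperative) on $[0,1]^2$ and non-increasing in the control, then conclude by a Kamke-type comparison theorem, which the paper proves in its appendix (Theorem~\ref{teo:comparacion-edo}) by exactly your first-violation-time argument followed by an $\varepsilon$-perturbation and passage to the limit. The only cosmetic difference is that the paper perturbs the vector field (by $+\varepsilon$) together with the initial condition rather than the initial condition alone, which closes the strictness gap slightly more cleanly when $\controlm(t)=\ub{\controlm}$.
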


\begin{proof}
Note that the components of vector field~$\campo$ 
in~\eqref{eq:campo-control-vector} are smooth, and that,
when $(\mos,\hum) \in [0,1]^2$, 
\begin{equation}
\frac{\partial g_{\mos}}{\partial \hum}=A_{\mos}  (1-\mos)\geq 0 \eqsepv 
\frac{\partial g_{\hum}}{\partial \mos}=A_{\hum}  (1-\hum)\geq 0 \eqfinp
\end{equation}
By Definition \ref{def:funcion-cuasimonotona} of 
Appendix~\ref{sec:resultados-teoricos}, $\campo$ is quasi monotonous 
in $(\mos, \hum)$ for any control $t \mapsto \controlm(t)$.

Denote by $(\ub{g_{\mos}},\ub{g_{\hum}})$ 
the vector fields when $\controlm=\ub{\controlm}$ and $\lb{\controlm}$, 
respectively, in~\eqref{eq:campo-control-vector}. Since
\( \ub{g_{\mos}}\leq g_{\mos}\) and 
\( \ub{g_{\hum}}\leq g_{\hum} \),
through the comparison Theorem \ref{teo:comparacion-edo-extension}, 
we obtain the following the result: if $\ub{\mos}(0)\leq \mos(0)$ and 
$\ub{\hum}(0)\leq \hum(0)$, then
\( \ub{\mos}(t)\leq\mos(t) \) and 
\( \ub{\hum}(t)\leq\hum(t) \), for every~$t\geq 0$. 
\end{proof}

\section{Characterization of the viability kernel}
\label{sec:caracterizacion-nucleo}

We will show that the characterization of the viability 
kernel~\eqref{eq:nucleo-viabilidad} depends on whether the upper 
limit~$\ub{\HUM}$ for the proportion of infected humans 
in~\eqref{eq:restriccion-estado} is low, high or medium. 

\begin{description}
\item[L)] 
When $\ub{\HUM}$ is \emph{low} in~\eqref{eq:restriccion-estado}, 
a strong constraint is put on the proportion of infected humans, 
and we will prove in~\S\ref{restriccion-fuerte}
that the viability kernel~\eqref{eq:nucleo-viabilidad} 
reduces to the origin~$\{(0,0)\}$. 

\item[H)]
When $\ub{\HUM}$ is \emph{high} in~\eqref{eq:restriccion-estado}, 
hence allowing the proportion of infected humans to be large, the viability
kernel is the entire constraint set~$\constraintset$ 
in~\eqref{eq:conjunto_de_restricciones}, as we will show 
in~\S\ref{restriccion-debil}. 

\item[M)]
 Finally, when $\ub{\HUM}$ is \emph{medium} in~\eqref{eq:restriccion-estado}, 
which is a more interesting case, the viability 
kernel~\eqref{eq:nucleo-viabilidad}  is a strict subset of the 
constraint set~$\constraintset$ 
in~\eqref{eq:conjunto_de_restricciones}, whose upper right frontier
is a smooth curve that we characterize. 
The proof of this result will be given in~\S\ref{restriccion-media}.
\end{description}

\subsection{When the infected humans upper bound is low}
\label{restriccion-fuerte}

When the imposed constraint~\eqref{eq:restriccion-estado} is strong, 
meaning that the upper bound~$\ub{\HUM}$ for the proportion of infected
 humans is low, the viability kernel~\eqref{eq:nucleo-viabilidad} 
reduces to the origin~$\{(0,0)\}$ as follows. 

\begin{proposition}
\label{prop:restriccion-fuerte}
If
\begin{equation}
\label{eq:cota-baja}
\ub{\HUM} < \frac{A_{\hum} - \gamma \ub{\controlm}/A_{\mos} }{A_{\hum} + \gamma}
\eqsepv 
\end{equation}
the viability kernel~\eqref{eq:nucleo-viabilidad} 
only consists of the origin:
\begin{equation}
\nucleo=\{(0,0)\} \eqfinp
\end{equation}
\end{proposition}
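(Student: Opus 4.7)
The plan is to show $\{(0,0)\} \subseteq \nucleo$ trivially, then rule out every other initial point via the monotonicity Proposition~\ref{prop-cuasimonotono} together with the global asymptotic stability of the endemic equilibrium of~\eqref{eq:model-control-vector} under constant maximum fumigation. The easy inclusion is immediate: $(0,0)$ is an equilibrium of the control system for any constant control, so taking $\controlm(\cdot) \equiv \ub{\controlm}$ keeps the trajectory at the origin and trivially satisfies $\hum(t) = 0 \leq \ub{\HUM}$.

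For the reverse inclusion I would argue by contradiction. Assume $(\mos_0, \hum_0) \in \nucleo$ with $(\mos_0, \hum_0) \neq (0,0)$, and pick an admissible $\controlm(\cdot)$ as in~\eqref{eq:restriccion-control} for which the trajectory $(\mos(t), \hum(t))$ of~\eqref{eq:model-control-vector} stays in $\{\hum \leq \ub{\HUM}\}$. Let $\bp{\ub{\mos}(t), \ub{\hum}(t)}$ denote the solution starting from the same $(\mos_0, \hum_0)$ but with the constant control $\controlm(t) \equiv \ub{\controlm}$. Applying Proposition~\ref{prop-cuasimonotono} with identical initial data (so both hypotheses hold trivially) yields $\ub{\hum}(t) \leq \hum(t) \leq \ub{\HUM}$ for all $t \geq 0$, reducing the problem to the dominated, autonomous trajectory under maximum fumigation.

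The crux is then to show that this dominated trajectory eventually violates the cap. Hypothesis~\eqref{eq:cota-baja} together with $\ub{\HUM} > 0$ and $A_{\hum}+\gamma > 0$ forces $A_{\hum} - \gamma \ub{\controlm}/A_{\mos} > 0$, i.e.\ exactly condition~\eqref{eq:existe-endemico-control} for $u_m = \ub{\controlm}$; hence the endemic equilibrium $E^*_{\ub{\controlm}}$ of~\eqref{eq:endemico-control} exists in $[0,1]^2$ and is globally asymptotically stable, and its $\hum$-coordinate $\frac{A_{\hum} - \gamma \ub{\controlm}/A_{\mos}}{A_{\hum} + \gamma}$ is strictly greater than $\ub{\HUM}$ by~\eqref{eq:cota-baja}. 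Since $(\mos_0, \hum_0) \neq (0,0)$, the trajectory $\bp{\ub{\mos}(t), \ub{\hum}(t)}$ converges to $E^*_{\ub{\controlm}}$, so $\ub{\hum}(t) > \ub{\HUM}$ for all sufficiently large $t$, contradicting the domination derived in the previous paragraph.

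The delicate point I anticipate is handling boundary initial data where the disease-free equilibrium $(0,0)$ might a priori trap the dominated trajectory and prevent convergence to $E^*_{\ub{\controlm}}$. If $\mos_0 = 0$ and $\hum_0 > 0$, then $\frac{d\mos}{dt}\big|_{t=0} = A_{\mos}\, \hum_0 > 0$, so the trajectory instantly enters the open positive quadrant, where the stated global asymptotic stability (obtained, as the paper notes, via Poincar\'e-Bendixson together with the Bendixson-Dulac criterion) takes effect; the symmetric case $\hum_0 = 0$, $\mos_0 > 0$ is analogous. Once this boundary subtlety is resolved, the three steps above combine to give $\nucleo = \{(0,0)\}$.
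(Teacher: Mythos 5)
Your proposal is correct and follows essentially the same route as the paper: dominate the given trajectory from below by the constant maximum-fumigation trajectory via Proposition~\ref{prop-cuasimonotono}, then use that~\eqref{eq:cota-baja} implies~\eqref{eq:existe-endemico-control}, so the dominated trajectory converges to the endemic equilibrium whose $\hum$-coordinate exceeds $\ub{\HUM}$. Your extra paragraph on initial data lying on the coordinate axes is a welcome bit of care that the paper's proof passes over silently when invoking global asymptotic stability on $[0,1]^2\setminus\{(0,0)\}$.
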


\begin{proof}
First, note that the state~$(0,0)$ is a viable equilibrium, 
as seen in~\S\ref{fumigacion-constante}. 
Second, if the initial conditions $(\mos(0), \hum(0))$
are taken outside $\{(0,0)\}$, let
us show that, for any $\controlm(\cdot)$ as in~\eqref{eq:restriccion-control}, 
the solution to~\eqref{eq:model-control-vector} violates 
the constraint~\eqref{eq:restriccion-estado}. 

Indeed, let $\ub{\mos}(t)$ and $\ub{\hum}(t)$ be solutions 
to~\eqref{eq:model-control-vector} when $\controlm(t)=\ub{\controlm}$
and with initial state $(\mos(0), \hum(0))$. 
The assumption~\eqref{eq:cota-baja} implies 
the condition~\eqref{eq:existe-endemico-control} that makes 
the endemic equilibrium point
\begin{equation}
\label{eq:endemico-control-maximo}
E^*_{\ub{\controlm}} = (\mos^*_{\ub{\controlm}},\hum^*_{\ub{\controlm}})
= \displaystyle \left(
\frac{A_{\mos} - \gamma u_m/A_{\hum} }{ A_{\mos} + u_m }, 
\frac{ A_{\hum} - \gamma u_m/A_{\mos} }{ A_{\hum} + \gamma }
\right)
\end{equation}
exist and display global asymptotic stability, 
as seen in~\S\ref{fumigacion-constante}. Hence, $\ub{\hum}(t)$ 
approaches~$\hum^*_{\ub{\controlm}}$ with $\ub{\HUM}<\hum^*_{\ub{\controlm}}$. 
So, by~\eqref{eq:cota-baja} and~\eqref{eq:endemico-control-maximo}, 
we have $\ub{\hum}(t)>\ub{\HUM}$ for all $t$ large enough. 

From Proposition~\ref{prop-cuasimonotono}, we have that 
$\ub{\hum}(t)\leq\hum(t)$, for every $t \geq 0$, so, 
for a significantly large~$t$, we have that 
$\ub{\HUM} <\ub{\hum}(t)\leq \hum(t)$. 

Therefore, for any initial condition outside $\{(0,0)\}$, 
for any  $\controlm(\cdot)$ as in~\eqref{eq:restriccion-control}, 
the solution to~\eqref{eq:model-control-vector} violates the 
constraint~\eqref{eq:restriccion-estado} for times~$t$ large enough,
hence at least for one time.
\end{proof}

\subsection{When the infected humans upper bound is high}
\label{restriccion-debil}

When the imposed constraint~\eqref{eq:restriccion-estado} is weak, 
meaning that the upper bound~$\ub{\HUM}$ for the proportion of infected humans 
is high, the viability kernel is the entire constraint 
set~$\constraintset$ in~\eqref{eq:conjunto_de_restricciones} as follows. 

\begin{proposition}
\label{prop:restriccion-debil}
If
\begin{equation}
\label{eq:cota-alta}
\frac{A_{\hum}}{A_{\hum} + \gamma} \leq \ub{\HUM} \eqfinv
\end{equation}
the constraint set~$\constraintset$ in~\eqref{eq:conjunto_de_restricciones}
is strongly invariant and is, therefore, the viability kernel:
\begin{equation}
\nucleo=\constraintset 
=\rectangle \eqfinp
\end{equation}
\end{proposition}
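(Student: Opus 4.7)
The plan is to prove that the constraint set $\constraintset$ is itself a viability domain for the controlled system~\eqref{eq:model-control-vector}--\eqref{eq:restriccion-control}, and then conclude by Theorem~\ref{th:Aubin}. Indeed, one always has $\nucleo \subset \constraintset$ by definition, and Theorem~\ref{th:Aubin} identifies $\nucleo$ with the largest viability domain contained in $\constraintset$; once $\constraintset$ is shown to be a viability domain, the equality $\nucleo = \constraintset$ follows immediately.

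To show $\constraintset = \rectangle$ is a viability domain, I would invoke the sufficient condition stated just after Proposition~\ref{prop:geometria-dominio-viables}: the rectangle has a piecewise smooth boundary, so it suffices to exhibit, at each point of each of the four edges, an admissible control $\controlm\in[\lb{\controlm},\ub{\controlm}]$ such that the scalar product of $\campo$ with an outward normal is non-positive. Because the control $\controlm$ does not enter $g_{\hum}$, and enters $g_{\mos}$ only through the term $-\controlm\,\mos$, the four edge conditions decouple almost trivially.

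Concretely, I would check the edges one at a time. On the bottom edge $\hum=0$, the outward normal is $(0,-1)$ and $g_{\hum}(\mos,0)=A_{\hum}\mos\geq 0$, so any control works. On the left edge $\mos=0$, the outward normal is $(-1,0)$ and $g_{\mos}(0,\hum,\controlm)=A_{\mos}\hum\geq 0$, so again any control works. On the right edge $\mos=1$, the outward normal is $(1,0)$ and $g_{\mos}(1,\hum,\controlm)=-\controlm\leq 0$ since $\controlm\geq\lb{\controlm}=\delta\geq 0$. Only the top edge $\hum=\ub{\HUM}$ is sensitive to the hypothesis: the outward normal is $(0,1)$, and $g_{\hum}(\mos,\ub{\HUM})=A_{\hum}\mos(1-\ub{\HUM})-\gamma\ub{\HUM}$ attains its maximum over $\mos\in[0,1]$ at $\mos=1$, with value $A_{\hum}(1-\ub{\HUM})-\gamma\ub{\HUM}$. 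The elementary equivalence
\begin{equation*}
A_{\hum}(1-\ub{\HUM})-\gamma\ub{\HUM}\leq 0 \iff \ub{\HUM}\geq \frac{A_{\hum}}{A_{\hum}+\gamma}
\end{equation*}
shows that the hypothesis~\eqref{eq:cota-alta} is exactly the borderline condition for the vector field to point inward on the entire top edge; moreover, since $g_{\hum}$ does not depend on $\controlm$, this holds for every admissible control.

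The main observation, which is the only point where the hypothesis is used, is this scalar inequality on the top edge; the three other sides are handled by the structural form of the vector field, and the four corners are automatically covered because the tangent cone at a corner contains both adjacent half-tangent cones. I do not anticipate any real obstacle: the check is local, control-free on the critical edge, and in fact yields the stronger \emph{strong invariance} claimed in the proposition, since the inward-pointing property on $\partial\constraintset$ holds for \emph{every} admissible $\controlm\in[\lb{\controlm},\ub{\controlm}]$, not merely for a selected one.
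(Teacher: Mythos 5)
Your proposal is correct and follows essentially the same route as the paper: an edge-by-edge check of the rectangle's boundary showing the vector field points inward for \emph{every} admissible control, with hypothesis~\eqref{eq:cota-alta} used only on the top edge $\hum=\ub{\HUM}$ via the same equivalence $A_{\hum}(1-\ub{\HUM})\leq\gamma\ub{\HUM}$. The paper concludes directly from strong invariance rather than via Theorem~\ref{th:Aubin}, but this is an immaterial difference.
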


\begin{proof}
Consider the vector field~$\campo$ described 
in~\eqref{eq:campo-control-vector}, which corresponds to the 
system~\eqref{eq:model-control-vector}. We will study the vector~$\campo$ 
along the four faces of the rectangle $\constraintset$ and 
we will prove that, for any control, 
$\campo$ is an inward-pointing vector, with respect to the set~$\constraintset$ 
(that is, the vector~$\campo$ belongs to the tangent cone, which is closed). 
Thanks to Proposition~\ref{pr:Marchaud} (in fact, a time-varying extension), 
this suffices to prove that $\constraintset$ is strongly invariant.
\begin{itemize}
\item 
For any state~$(\mos,\hum)$ on the vertical half-line~$\mos=0$
and $\hum \geq 0$, we have that: 
\begin{equation}
g_{\mos}(0,\hum,\controlm)=A_{\mos}\hum \geq 0 \eqsepv  
g_{\hum}(0,\hum)= -\gamma \hum \leq 0 \eqfinp
 \end{equation}
\begin{figure}
\centering
\includegraphics[width=10cm,height=6cm]{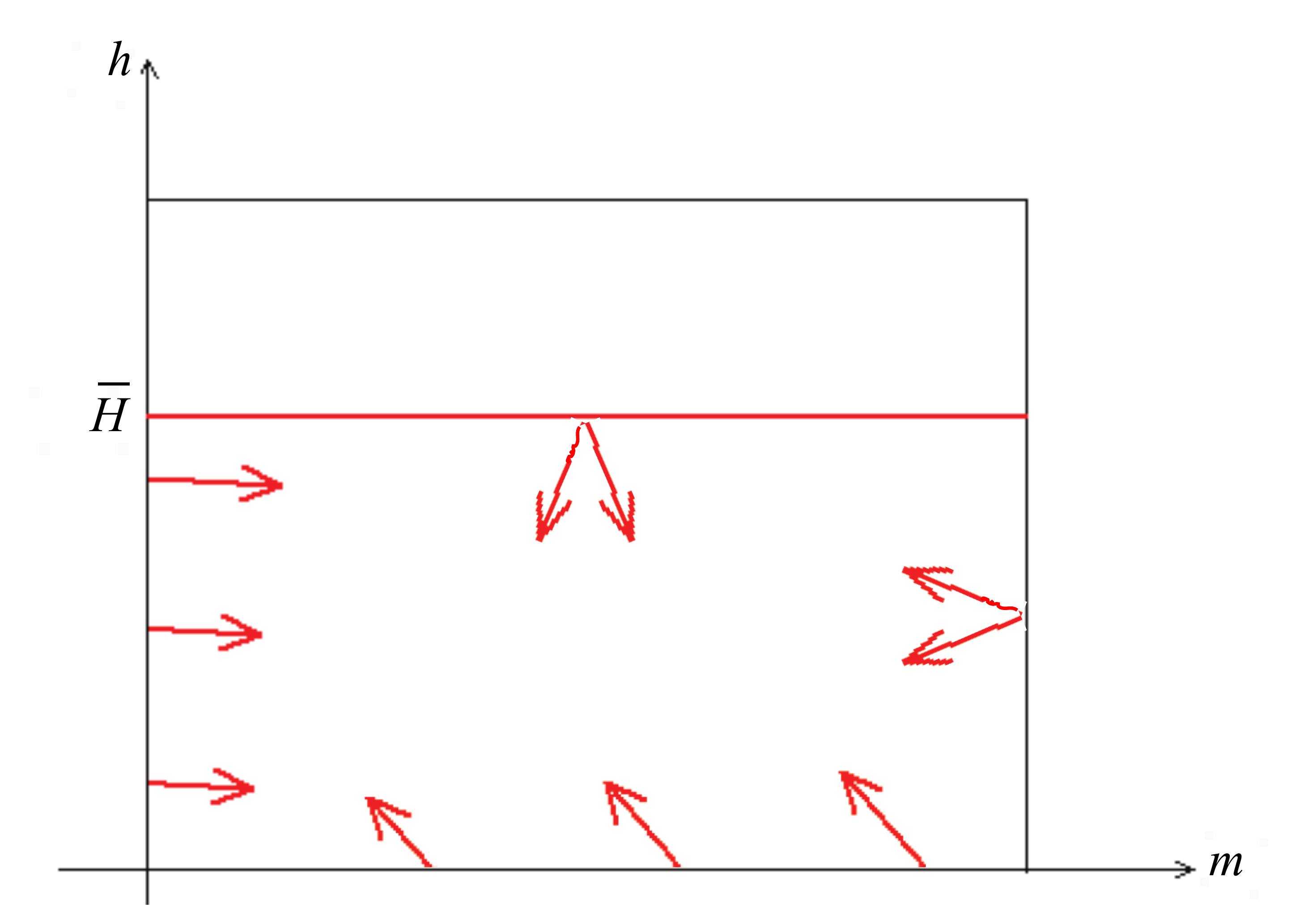}
\caption{Vector field~$\campo$ on the frontier of the constraint set~$\constraintset$}
\label{fig:frontera-conjunto-restriccion}
\end{figure}
Therefore, for any control $\controlm \in [\lb{\controlm},\ub{\controlm}]$, 
the vector~$\campo$ always points towards the inside of~$\constraintset$
(see the left part of Figure~\ref{fig:frontera-conjunto-restriccion}).

\item For any state~$(\mos,\hum)$ on the horizontal 
half-line~$\hum=0$ and $\mos \geq 0$, we have that:
\begin{equation}
g_{\mos}(0,\hum,\controlm)=-\controlm \mos \leq 0 \eqsepv  
g_{\hum}(0,\hum)= A_{\hum} \mos \geq 0 \eqfinp
\end{equation}
Therefore, for any control $\controlm \in [\lb{\controlm},\ub{\controlm}]$, 
the vector~$\campo$ always points towards the inside of~$\constraintset$
(see the bottom part of Figure~\ref{fig:frontera-conjunto-restriccion}).

\item For any state~$(\mos,\hum)$ on the vertical line~$\mos=1$, 
we have that
\begin{equation}
 g_{\mos}(1,\hum,\controlm)= -\controlm \leq -\lb{\controlm} \leq 0 \eqfinp
\end{equation}
Therefore, for any control $\controlm \in [\lb{\controlm},\ub{\controlm}]$, 
the vector~$\campo$ always points to the left of the line~$\mos=1$. Hence, 
the vector~$\campo$ always points towards the inside of~$\constraintset$
(see the right part of Figure~\ref{fig:frontera-conjunto-restriccion}).

\item Finally, for any state~$(\mos,\hum)$ on the horizontal
half-line~$\hum=\ub{\HUM}$ and $\mos \leq 1$,  we have that
\begin{equation}
g_{\hum}(\mos, \ub{\HUM})=A_{\hum} \mos (1-\ub{\HUM})-\gamma  \ub{\HUM} 
\leq A_{\hum} (1-\ub{\HUM})-\gamma  \ub{\HUM} \leq 0 \eqfinv
\end{equation}
by~\eqref{eq:cota-alta} 
(which is equivalent to $A_{\hum} (1-\ub{\HUM}) \leq \gamma \ub{\HUM} $). 
Therefore, for any control~$\controlm \in [\lb{\controlm},\ub{\controlm}]$, 
the vector~$\campo$ always points below the line~$\hum=\ub{\HUM}$. 
Hence, the vector~$\campo$ always points towards the inside of~$\constraintset$
(see the top part of Figure~\ref{fig:frontera-conjunto-restriccion}).

\end{itemize}
Therefore, the constraint set~$\constraintset$ is strongly invariant. 
So, the viability kernel is $\constraintset$ because every trajectory 
starting from~$\constraintset$ remains in~$\constraintset$, hence satisfies 
the constraint~\eqref{eq:restriccion-estado}.  
\end{proof}

\subsection{When the infected humans upper bound is medium}
\label{restriccion-media}

When the constraint is medium, that is, not too weak or too strong, 
then the upper bound for the proportion of infected humans is medium,
and the viability kernel is in-between the origin~$\{(0,0)\}$ and 
the entire constraint set~$\constraintset$ 
in~\eqref{eq:conjunto_de_restricciones}.
We introduce
\begin{equation}
\ub{\MOS}=\displaystyle\frac{\gamma \ub{\HUM}}{A_{\hum}(1-\ub{\HUM})}
\eqfinv 
\label{eq:hat_mos}
\end{equation}
the value $\mos$ on the line~$\hum=\ub{\HUM}$ where the 
component~$g_{\hum}(\mos, \hum)$ in~\eqref{eq:campo-control-vector_hum} is zero, 
that is,
\begin{equation}
 g_{\hum}(\ub{\MOS},\ub{\HUM}) = 
\ub{\MOS}(1-\ub{\HUM})-\gamma\ub{\HUM} = 0 \eqfinp
\label{eq:carac_hat_mos}
\end{equation}

\begin{proposition}
\label{prop:restriccion-media}
\begin{subequations}
If the upper bound for the proportion of infected humans is medium,
that is, if 
\begin{equation}
  \frac{A_{\hum} - \gamma \ub{\controlm}/A_{\mos} }{%
A_{\hum} + \gamma } 
< \ub{\HUM} < \frac{A_{\hum}}{A_{\hum} + \gamma} \eqfinv
\label{eq:cota-media_a}
\end{equation}
where
\begin{equation}
  0<\frac{A_{\hum} - \gamma \ub{\controlm}/A_{\mos} }{%
A_{\hum} + \gamma } \eqfinv
\label{eq:cota-media_b}   
\end{equation}
\label{eq:cota-media}    
\end{subequations}
we have that $\ub{\MOS}$, as defined in~\eqref{eq:hat_mos} is such that 
$\ub{\MOS}<1$, 
and the differential equation
\begin{subequations}
\begin{equation}
\label{eq:frontera-derecha}
-g_{\mos}(\mos, \solucion(\mos), \ub{\controlm}) \solucion'(\mos)+g_{\hum}(\mos, \solucion(\mos))=0 \eqfinv
\end{equation}
with initial condition
\begin{equation}
\label{eq:condicion-inicial-frontera}
\solucion(\ub{\MOS})=\ub{\HUM}  \eqfinv
\end{equation}
\end{subequations}
has a unique nonnegative solution, and this solution is of the form
\begin{equation}
\solucion : [\ub{\MOS}, \MOS_{\infty} ] \to [0,\ub{\HUM}] \text{ with }
\begin{cases}
\text{either \quad}  \ub{\MOS} < \MOS_{\infty} < 1 \text{ and } 
\solucion(\MOS_{\infty})=0 \eqfinv\\
\text{or \quad} \MOS_{\infty} =1 \eqfinp 
\end{cases}
\label{eq:solucion_Y}
\end{equation}
The viability kernel~\eqref{eq:nucleo-viabilidad} is $\nucleo=$
\begin{equation}
\left ([0, \ub{\MOS}]\times [0, \ub{\HUM}]\right )
\bigcup \left\{(\mos, \hum)\Big | \ub{\MOS}\leq \mos\leq \MOS_{\infty}
\eqsepv 0\leq\hum\leq \solucion(\mos)\right \}  \eqfinp
\end{equation}
\end{proposition}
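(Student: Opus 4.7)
The plan is to construct $\solucion$ as the phase--plane orbit of the closed--loop system~\eqref{eq:campo-model-control-vector} under maximal fumigation $\controlm\equiv\ub{\controlm}$ tangent to the cap $\{\hum=\ub{\HUM}\}$ at $(\ub{\MOS},\ub{\HUM})$, to show that the candidate set $\mathbb{W}$ in the statement is a viability domain via the inward--normal criterion of Proposition~\ref{prop:geometria-dominio-viables}, and to obtain the reverse inclusion $\nucleo\subset\mathbb{W}$ by combining the comparison Proposition~\ref{prop-cuasimonotono} with uniqueness of ODE flow and a Poincar\'e--Bendixson argument.

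I first dispatch $\ub{\MOS}<1$ directly from the right inequality in~\eqref{eq:cota-media_a}. Then I evaluate $\campo$ at $(\ub{\MOS},\ub{\HUM})$: by the definition~\eqref{eq:carac_hat_mos}, $g_{\hum}(\ub{\MOS},\ub{\HUM})=0$, and using the left inequality in~\eqref{eq:cota-media_a} a short algebraic rearrangement gives
\[
g_{\mos}(\ub{\MOS},\ub{\HUM},\ub{\controlm})
   = \frac{A_{\mos}\ub{\HUM}(A_{\hum}+\gamma)}{A_{\hum}(1-\ub{\HUM})}\bigl(\hum^*_{\ub{\controlm}}-\ub{\HUM}\bigr)<0.
\]
Hence the orbit $\Gamma$ of~\eqref{eq:campo-model-control-vector} through $(\ub{\MOS},\ub{\HUM})$ is tangent to $\{\hum=\ub{\HUM}\}$ from below, with $\hum$ attaining a strict local maximum along $\Gamma$ at this tangency. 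Running $\Gamma$ backwards in time, I parameterize it as $\hum=\solucion(\mos)$; the ODE~\eqref{eq:frontera-derecha} is exactly $\solucion'=g_{\hum}/g_{\mos}$ along this branch. Monotonicity in~$\mos$ of both nullclines of the $\ub{\controlm}$--closed--loop system implies that this backward branch stays strictly below the $\hum$--nullcline (so $g_{\hum}>0$) and strictly below the $\mos$--nullcline (so $g_{\mos}<0$), which keeps $\mos$ strictly monotone backward along $\Gamma$, validates the parameterization, and yields strict monotonicity of $\solucion$. The maximal interval is $[\ub{\MOS},\MOS_{\infty}]$; and because the Bendixson--Dulac criterion recalled in~\S\ref{fumigacion-constante} forbids periodic orbits and the only interior equilibrium $E^*_{\ub{\controlm}}$ lies on the \emph{forward} branch of $\Gamma$, the backward orbit must exit the open rectangle $(0,1)\times(0,\ub{\HUM})$ through either $\{\hum=0\}$ or $\{\mos=1\}$, giving the two alternatives in~\eqref{eq:solucion_Y}.

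I then prove $\mathbb{W}\subset\nucleo$ by the inward--normal criterion of Proposition~\ref{prop:geometria-dominio-viables} with the constant choice $\controlm\equiv\ub{\controlm}$. On the straight faces $\mos=0$, $\hum=0$ and (if $\MOS_{\infty}=1$) $\mos=1$, the sign checks are identical to those in the proof of Proposition~\ref{prop:restriccion-debil}. On the upper segment $\{\hum=\ub{\HUM},\;0\le\mos\le\ub{\MOS}\}$, monotonicity of $\mos\mapsto g_{\hum}(\mos,\ub{\HUM})$ combined with $g_{\hum}(\ub{\MOS},\ub{\HUM})=0$ gives $g_{\hum}\le 0$, so the field is inward. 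On the graph of $\solucion$ the field $\campo$ with $\controlm=\ub{\controlm}$ is tangent by construction of $\Gamma$.

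The hardest step is the reverse inclusion $\nucleo\subset\mathbb{W}$. Given $(\mos_0,\hum_0)\in\constraintset\setminus\mathbb{W}$, Proposition~\ref{prop-cuasimonotono} reduces the task to showing that the max--control orbit $\bigl(\ub{\mos}(t),\ub{\hum}(t)\bigr)$ itself violates $\ub{\hum}\le\ub{\HUM}$ for some $t>0$. The set $R=\constraintset\setminus\mathbb{W}$ is compact, contains no equilibrium of the $\ub{\controlm}$--system, and no periodic orbit (Bendixson--Dulac); its boundary consists of pieces of $\Gamma$ (uncrossable by uniqueness of the ODE flow), the edge $\mos=1$ (uncrossable since $g_{\mos}=-\ub{\controlm}<0$), pieces of $\hum=0$ (field inward), and the upper edge $\hum=\ub{\HUM}$ (where $g_{\hum}>0$ for $\mos>\ub{\MOS}$, hence outward). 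Poincar\'e--Bendixson then rules out non--trivial $\omega$--limit behavior in $R$, so the orbit must exit $R$, and the only permissible exit is through $\hum=\ub{\HUM}$. The delicate point is the case $\MOS_{\infty}<1$, in which an extra slab $\{\MOS_{\infty}<\mos\le 1\}$ belongs to $R$; one verifies the plane--separation property of $\Gamma$ near $(\MOS_{\infty},0)$ by continuing it analytically into $\{\hum<0\}$, so that uniqueness still prevents trajectories starting in the slab from reaching the left part of $\mathbb{W}$ without first crossing $\hum=\ub{\HUM}$.
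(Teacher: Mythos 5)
Your proposal is correct and, for the first two thirds, runs parallel to the paper: you build the upper-right frontier as the orbit~$\Gamma$ of the maximal-fumigation field through $(\ub{\MOS},\ub{\HUM})$ (the paper does this in two steps, first solving the scalar ODE~\eqref{eq:frontera-derecha} in the variable~$\mos$ and then showing in Lemma~\ref{lema:frontera-derecha} that its graph is an orbit, whereas you start from the orbit and read off the graph), and your sign computation $g_{\mos}(\ub{\MOS},\ub{\HUM},\ub{\controlm})=\frac{A_{\mos}\ub{\HUM}(A_{\hum}+\gamma)}{A_{\hum}(1-\ub{\HUM})}\bp{\hum^*_{\ub{\controlm}}-\ub{\HUM}}<0$ is exactly the paper's step~1 (and you correctly attribute the sign to the left inequality of~\eqref{eq:cota-media_a} rather than to~\eqref{eq:cota-media_b}). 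The genuine divergence is in the maximality step. The paper matches an arbitrary starting point $(\mos_0,\hum_0)\in\constraintset\setminus\VV$ with the point $(\dub{\mos}_0,\hum_0)$ on~$\Gamma$ at the same height, shows via Lemma~\ref{lema:frontera-derecha} that the max-control trajectory from that curve point reaches $(\ub{\MOS},\ub{\HUM})$ at a finite time~$T$, sandwiches with Proposition~\ref{prop-cuasimonotono}, and finishes with a uniqueness-based contradiction to turn $\hum(T)\geq\ub{\HUM}$ into a strict inequality. You instead apply Poincar\'e--Bendixson to the complement region: no equilibria (since $\mos^*_{\ub{\controlm}}<\ub{\MOS}$), no closed orbits (Bendixson--Dulac), impermeable boundary except the top edge where $g_{\hum}>0$, hence forced exit above~$\ub{\HUM}$. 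Both arguments work; yours is more qualitative and avoids the explicit height-matching and the final strictness lemma, while the paper's yields the extra quantitative fact that every boundary trajectory reaches $(\ub{\MOS},\ub{\HUM})$ in finite time, which it reuses. Three points of polish: $\constraintset\setminus\VV$ is not compact (it omits~$\Gamma$), so Poincar\'e--Bendixson should be applied to its closure, which still contains no equilibrium; the kink points $(\ub{\MOS},\ub{\HUM})$, $(\MOS_{\infty},0)$ and $(1,\solucion(1))$ of $\partial\VV$ deserve the explicit normal-cone check the paper gives in Lemma~\ref{lema:dominio-viable}; and the analytic continuation of~$\Gamma$ into $\{\hum<0\}$ near $(\MOS_{\infty},0)$ is unnecessary, since $g_{\hum}(\mos,0)=A_{\hum}\mos>0$ already makes $\{\hum>0\}$ forward invariant and, combined with non-crossing of~$\Gamma$, seals off~$\VV$ from the slab $\{\mos>\MOS_{\infty}\}$.
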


On the Figure~\ref{fig:nucleo1-2013}, we display three viability kernels
when the constraint is medium (corresponding to three values for 
the maximum tolerated proportion~$\ub{\HUM}$ of infected humans). 
\begin{figure}
\centering
\includegraphics[width=13cm,height=7.7cm]{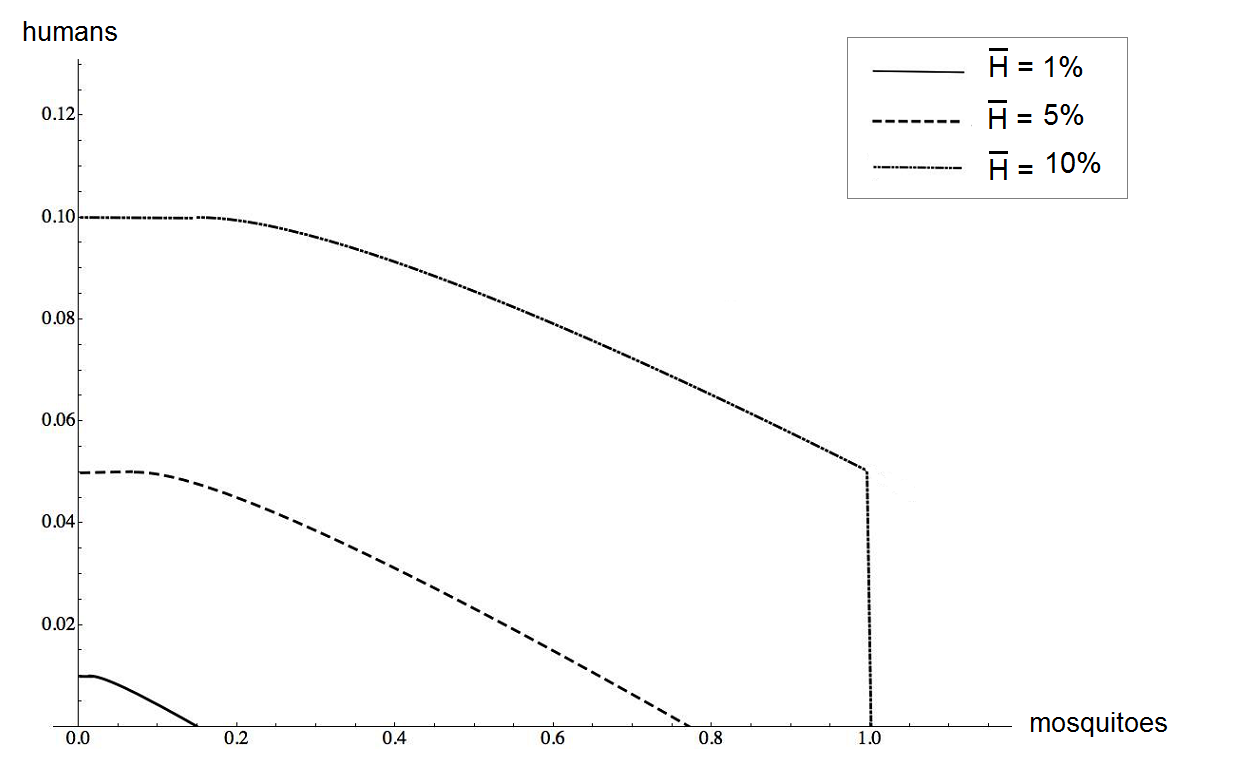}
\caption{Three viability kernels for the Ross-MacDonald 
model~\eqref{eq:model-Ross-Macdonald}, when the constraint is medium 
(corresponding to three values for 
the maximum tolerated proportion~$\ub{\HUM}$ of infected humans). 
Parameters are adjusted to the 2013 dengue outbreak in Cali, Colombia.
}
\label{fig:nucleo1-2013}
\end{figure}

The proof consists of four lemmas. 
In Lemma~\ref{lema:solucion-frontera-derecha}, we describe the solution 
to~\eqref{eq:frontera-derecha}--\eqref{eq:condicion-inicial-frontera}
(Lemma~\ref{lema:frontera-derecha} provides additional information, that 
will be useful).
Lemma~\ref{lema:dominio-viable} shows that the set
\begin{equation}
\label{eq:nucleo-medio}
\VV=\left ([0, \ub{\MOS}]\times [0, \ub{\HUM}]\right )
\bigcup \left\{(\mos, \hum)\Big | \ub{\MOS}\leq \mos\leq \MOS_{\infty}
\eqsepv 0\leq\hum\leq \solucion(\mos)\right \} 
\end{equation}
is a viability domain.  
Finally, in Lemma~\ref{lema:mayor-dominio-viable} we prove
that the set~$\VV$, defined in~\eqref{eq:nucleo-medio}, 
is the largest viability domain within the constraint 
set~\eqref{eq:conjunto_de_restricciones}, hence is the viability kernel
by Theorem~\ref{th:Aubin}. 

\begin{lemma}
\label{lema:solucion-frontera-derecha}
When the inequalities~\eqref{eq:cota-media} are fulfilled, 
there exists a function~$\solucion$, solution
to the differential equation~\eqref{eq:frontera-derecha} which satisfies 
the initial condition~\eqref{eq:condicion-inicial-frontera} and has the 
form~\eqref{eq:solucion_Y}. The function 
$\solucion: [\ub{\MOS},\MOS_{\infty}]  \to [\solucion(\MOS_{\infty}),\ub{\HUM}]$ 
is a strictly decreasing one-to-one mapping. 
\end{lemma}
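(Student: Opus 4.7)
The strategy is to read equation~\eqref{eq:frontera-derecha} as the phase-plane (graph) equation of the system~\eqref{eq:campo-model-control-vector} driven by the constant maximal control $\controlm(\cdot)\equiv\ub{\controlm}$, and to construct $\solucion$ as the graph of the trajectory of this system passing through $(\ub{\MOS},\ub{\HUM})$, traced backward in time. Concretely, I would introduce the reverse-time auxiliary system
\begin{equation*}
\frac{d\mos}{ds} = -g_{\mos}(\mos,\hum,\ub{\controlm}), \quad
\frac{d\hum}{ds} = -g_{\hum}(\mos,\hum), \quad
(\mos(0),\hum(0))=(\ub{\MOS},\ub{\HUM}),
\end{equation*}
which is locally well-posed by the Cauchy--Lipschitz theorem since the vector field in~\eqref{eq:campo-control-vector} is smooth. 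The goal is to show that the resulting orbit for $s\ge 0$ is the graph of a function $\mos\mapsto\solucion(\mos)$ with the claimed properties.

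First, I would verify two algebraic sign facts that underpin everything else. The inequality $\ub{\MOS}<1$ is equivalent, through the definition~\eqref{eq:hat_mos}, to $\gamma\ub{\HUM}<A_{\hum}(1-\ub{\HUM})$, i.e.\ to $\ub{\HUM}<A_{\hum}/(A_{\hum}+\gamma)$, which is the right inequality in~\eqref{eq:cota-media_a}. Substituting $\ub{\MOS}$ from~\eqref{eq:hat_mos} into the expression of $g_{\mos}$ and simplifying, one finds that $g_{\mos}(\ub{\MOS},\ub{\HUM},\ub{\controlm})<0$ is equivalent to $(A_{\hum}-\gamma\ub{\controlm}/A_{\mos})/(A_{\hum}+\gamma)<\ub{\HUM}$, i.e.\ to the left inequality in~\eqref{eq:cota-media_a}. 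Hence, at $s=0$, the reverse dynamics satisfy $d\mos/ds=-g_{\mos}>0$ while $d\hum/ds=-g_{\hum}=0$ by~\eqref{eq:carac_hat_mos}, so the orbit initially moves strictly to the right.

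The crucial second step is to establish positive invariance, under the reverse flow, of the open region $R=\{(\mos,\hum):\ub{\MOS}<\mos<1,\ 0<\hum<\ub{\HUM}\}$. For any $\mos>\ub{\MOS}$, the nullcline $\{g_{\hum}=0\}$, explicitly $\hum=A_{\hum}\mos/(A_{\hum}\mos+\gamma)$, is strictly increasing in $\mos$ and equals $\ub{\HUM}$ at $\mos=\ub{\MOS}$, so it strictly exceeds $\ub{\HUM}$ on $(\ub{\MOS},1]$. Consequently any point with $\mos>\ub{\MOS}$ and $0<\hum<\ub{\HUM}$ satisfies $g_{\hum}(\mos,\hum)>0$, which forces $\hum$ to strictly decrease along the reverse trajectory. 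In parallel, $g_{\mos}(\mos,\hum,\ub{\controlm})=A_{\mos}\hum(1-\mos)-\ub{\controlm}\mos$ becomes even more negative when $\mos$ grows or $\hum$ shrinks, so the sign $g_{\mos}<0$ is preserved and $\mos$ strictly increases. The trajectory therefore cannot exit $R$ through the boundary pieces $\hum=\ub{\HUM}$ or $\mos=\ub{\MOS}$, and can only leave through $\hum=0$ (at some $\MOS_{\infty}\in(\ub{\MOS},1)$) or in the limit $\mos\to 1$.

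Finally, since $s\mapsto\mos(s)$ is strictly increasing, I would invert it and set $\solucion(\mos):=\hum(s(\mos))$ on $[\ub{\MOS},\MOS_{\infty}]$, where $\MOS_{\infty}\in(\ub{\MOS},1]$ is the supremum of reached $\mos$-values; the exit dichotomy is exactly~\eqref{eq:solucion_Y}. The chain rule gives $\solucion'(\mos)=g_{\hum}/g_{\mos}$, which rearranges to the ODE~\eqref{eq:frontera-derecha}, while strict monotonicity of $\solucion$ follows from $g_{\hum}>0$ and $g_{\mos}<0$ along the graph. Since $g_{\mos}$ stays strictly bounded away from zero on the graph, the right-hand side $g_{\hum}/g_{\mos}$ of the reformulated ODE is locally Lipschitz, so Cauchy--Lipschitz applied to~\eqref{eq:frontera-derecha}--\eqref{eq:condicion-inicial-frontera} delivers uniqueness. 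The main obstacle is the invariance argument of the preceding paragraph: ruling out a priori that the reverse trajectory re-crosses the nullcline $\{g_{\hum}=0\}$ or leaves the strip $\hum\in[0,\ub{\HUM}]$. The key observation that makes the proof go through is that for $\mos>\ub{\MOS}$ this nullcline lies strictly above $\ub{\HUM}$, which renders the monotone reverse dynamics self-consistent.
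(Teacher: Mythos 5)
Your proof is correct, but it takes a genuinely different route from the paper's. The paper works directly with the scalar equation $\solucion'(\mos)=g_{\hum}(\mos,\solucion(\mos))/g_{\mos}(\mos,\solucion(\mos),\ub{\controlm})$ in the independent variable~$\mos$: it first checks $g_{\mos}(\ub{\MOS},\ub{\HUM},\ub{\controlm})<0$ (and hence $g_{\mos}<0$ on the whole quadrant $\mos\geq\ub{\MOS}$, $\hum\leq\ub{\HUM}$, exactly as you do), applies Cauchy--Lipschitz locally, proves strict decrease \emph{near}~$\ub{\MOS}$ by a first-order expansion of $g_{\hum}(\mos,\solucion(\mos))$ around the point where $\solucion'(\ub{\MOS})=0$, extends strict decrease globally by a contradiction argument at the first putative critical point, and finally obtains the dichotomy~\eqref{eq:solucion_Y} by case analysis. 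You instead construct the curve as the reverse-time orbit of the two-dimensional system at maximal control through $(\ub{\MOS},\ub{\HUM})$, and derive everything from the positive invariance (under the reverse flow) of the strip $\{\mos>\ub{\MOS},\ 0<\hum<\ub{\HUM}\}$, using the fact that the $\hum$-nullcline lies strictly above~$\ub{\HUM}$ for $\mos>\ub{\MOS}$. Both sign computations agree (your identification of $g_{\mos}(\ub{\MOS},\ub{\HUM},\ub{\controlm})<0$ with the left inequality of~\eqref{eq:cota-media_a} is the correct one), and the uniform bound $-g_{\mos}\geq -g_{\mos}(\ub{\MOS},\ub{\HUM},\ub{\controlm})>0$, which you invoke for Lipschitz continuity, also guarantees that the reverse orbit exits the strip in finite time, so that the dichotomy in~\eqref{eq:solucion_Y} is exhaustive. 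What your approach buys is that the content of Lemma~\ref{lema:frontera-derecha} --- that the graph of~$\solucion$ is an orbit of the field~$\campo$ for the control~$\ub{\controlm}$ --- comes for free from the construction, whereas the paper must prove it separately afterwards; what the paper's approach buys is that it never has to invert the time parametrization or argue invariance of a planar region, staying entirely within scalar ODE theory.
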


\begin{proof}
We conduct the proof in five steps.
\begin{enumerate}
\item 
\label{it:coeficiente-mos<0}
First, we note that 
\begin{eqnarray*}
g_{\mos}(\ub{\MOS}, \ub{\HUM}, \ub{\controlm}) &=&
A_{\mos} \ub{\HUM} (1-\ub{\MOS}) -\ub{\controlm} \ub{\MOS}
\text{ by~\eqref{eq:campo-control-vector_mos} } \\
&=& A_{\mos} \ub{\HUM} - (A_{\mos} \ub{\HUM}+\ub{\controlm})\ub{\MOS} \\
&=& A_{\mos} \ub{\HUM} - (A_{\mos} \ub{\HUM}+\ub{\controlm})
\frac{\gamma \ub{\HUM}}{A_{\hum}(1-\ub{\HUM})} 
 \text{ by definition~\eqref{eq:hat_mos} of~ } \ub{\MOS} \\
&=& \frac{\ub{\HUM}}{A_{\hum}(1-\ub{\HUM})} [A_{\mos}A_{\hum}(1-\ub{\HUM})
- \gamma (A_{\mos} \ub{\HUM}+\ub{\controlm}) ]  \\
&=& \frac{\ub{\HUM}}{A_{\hum}(1-\ub{\HUM})} 
[A_{\mos}A_{\hum}- \gamma \ub{\controlm} - 
(A_{\mos}A_{\hum}+\gamma A_{\mos}) \ub{\HUM} ] \\
&<& 0 \text{ by~\eqref{eq:cota-media_b}.}
\end{eqnarray*}
We easily deduce the following property, that will be useful later:
\begin{equation}
\mos \geq \ub{\MOS} \eqsepv \hum \leq \ub{\HUM} \Rightarrow 
g_{\mos}({\MOS}, {\HUM}, \ub{\controlm}) \leq
g_{\mos}(\ub{\MOS}, \ub{\HUM}, \ub{\controlm}) < 0 \eqfinp 
\label{eq:isocline_mos}
\end{equation}

\item 
Second, we show that there is a local solution to the differential
equation~\eqref{eq:frontera-derecha}--\eqref{eq:condicion-inicial-frontera}. 
Indeed, in the neighborhood of $(\ub{\MOS},\solucion(\ub{\MOS}))=
(\ub{\MOS},\ub{\HUM})$, the coefficient 
$g_{\mos}(\mos, \hum, \ub{\controlm})$ of~$\solucion'(\mos)$ 
in~\eqref{eq:frontera-derecha} is negative, as we just saw it in
the previous item~\ref{it:coeficiente-mos<0}. 
Hence, in a neighborhood of $\ub{\MOS}$, 
we can write~\eqref{eq:frontera-derecha} as 
\begin{equation}
  \label{eq:frontera-derecha-equivalente}
\solucion'(\mos)=\frac{g_{\hum}(\mos, \solucion(\mos))}{%
g_{\mos}(\mos, \solucion(\mos), \ub{\controlm})}=
\frac{A_{\hum} \mos(1-\solucion(\mos))-\gamma \solucion(\mos)}{%
A_{\mos} \solucion(\mos) (1-\mos)-\ub{\controlm} \mos} \eqfinp
\end{equation}
Applying the Cauchy-Lipschitz theorem 
to~\eqref{eq:frontera-derecha-equivalente}--\eqref{eq:condicion-inicial-frontera}, we know that there exists a unique local $C^1$ solution~$\solucion$ 
defined on an interval~$I$ around~$\ub{\MOS}$. We put
\begin{equation}
I_+= I \cap [\ub{\MOS}, +\infty[ \eqfinp  
\end{equation}

\item 
\label{it:estrictamente_decreciente}
Third, we show that the local solution $\solucion: I_+ \to \RR$ is 
strictly decreasing in the neighborhood of~$\ub{\MOS}$. 
For this purpose, we will study the sign of  
$g_{\hum}(\mos, \solucion(\mos))$ for $m \approx \ub{\MOS}$. We have that
\begin{eqnarray*}
  g_{\hum}(\mos, \solucion(\mos)) &=& 
g_{\hum}(\mos, \solucion(\mos))-g_{\hum}(\ub{\MOS},\ub{\HUM}) 
\text{ by~\eqref{eq:carac_hat_mos}} \\
&=& A_{\hum} \mos(1-\solucion(\mos))-\gamma \solucion(\mos) -
A_{\hum} \ub{\MOS}(1-\ub{\HUM})+\gamma\ub{\HUM} \\ 
&=& A_{\hum} (\mos-\ub{\MOS})(1-\ub{\HUM})
+ (A_{\hum} \mos +\gamma) (\ub{\HUM}-\solucion(\mos)) \eqfinp
\end{eqnarray*}
We have that \( \solucion(\mos)-\ub{\HUM} =o(\mos-\ub{\MOS}) \)
when $\mos \to \ub{\MOS}$. 
Indeed, the function~$\solucion$ is $C^1$ 
and such that $\solucion'(\ub{\MOS}) = 0$,
by~\eqref{eq:frontera-derecha-equivalente} 
since $g_{\hum}(\ub{\MOS},\ub{\HUM}) = 0$ by~\eqref{eq:carac_hat_mos}. 
Therefore, we deduce that 
\begin{equation}
g_{\hum}(\mos, \solucion(\mos)) =   A_{\hum} (\mos-\ub{\MOS})(1-\ub{\HUM}) 
+  o(\mos-\ub{\MOS}) \eqfinp 
\end{equation}

Hence, when  $\mos > \ub{\MOS}$ and $\mos\approx\ub{\MOS}$, 
we have that $g_{\hum}(\mos, \solucion(\mos))>0$. 
Therefore, by~\eqref{eq:frontera-derecha-equivalente} and 
item~\ref{it:coeficiente-mos<0}, the function~$\solucion: I_+ \to \RR$ 
is strictly decreasing in a neighborhood of~$\ub{\MOS}$. 

\item
\label{it:estrictamente_decreciente_bis} 
Fourth, we show that the local solution $\solucion: I_+ \to \RR$ 
is strictly decreasing. Indeed, let us suppose the contrary: there exists
$\mos \in I_+ $, $\mos > \ub{\MOS}$, such that $\solucion'(\mos) = 0$. 
We denote by~$\tilde{\mos}$ the smallest of such~$\mos$.  
By item~\ref{it:estrictamente_decreciente}, we know that 
$\solucion'(\mos) <0$ in the neighborhood of~$\ub{\MOS}$ 
(except at $\ub{\MOS}$), so that $\tilde{\mos} > \ub{\MOS}$. 
By definition of~$\tilde{\mos}$, we have that $\solucion'(\mos) <0$ 
for $\mos \in ]\ub{\MOS},\tilde{\mos}[$. Hence, 
$ \solucion(\tilde{\mos}) < \solucion(\ub{\MOS})=\ub{\HUM} $, so that 
\begin{eqnarray*}
g_{\hum}(\tilde{\mos}, \solucion(\tilde{\mos})) 
&=& A_{\hum} \tilde{\mos}(1-\solucion(\tilde{\mos}))
-\gamma \solucion(\tilde{\mos}) 
\text{ by~\eqref{eq:campo-control-vector}} \\ 
& > & A_{\hum} \ub{\MOS}(1-\ub{\HUM}) -\gamma \ub{\HUM} \\ 
& =  & 
g_{\hum}(\ub{\MOS},\ub{\HUM}) = 0 \text{ by~\eqref{eq:carac_hat_mos}.} 
\end{eqnarray*}
As a consequence, by the differential
equation~\eqref{eq:frontera-derecha-equivalente} and 
item~\ref{it:coeficiente-mos<0}, we obtain that $\solucion'(\tilde{\mos}) > 0$.  We arrive at a contradiction, since $\solucion'(\tilde{\mos}) = 0$
by definition of~$\tilde{\mos}$. Therefore, $\tilde{\mos}$ does not exist and 
the local solution $\solucion: I_+ \to \RR$ is strictly decreasing. 

\item 
\label{item:forma-solucion-frontera-derecha}
Finally, we show that the local solution $\solucion: I_+ \to \RR$ 
is such that~\eqref{eq:solucion_Y} holds true. 
For this purpose, we consider two cases. 

\begin{itemize}
\item If, for every $\mos\in I_+$, $ 0 <  \solucion(\mos)$, 
we show that $I_+ = [\ub{\MOS}, +\infty[$. Indeed, as $\solucion$ decreases, 
we have $\mos\in I_+ \Rightarrow
0< \solucion(\mos)\leq \solucion(\ub{\MOS})=\ub{\HUM} $.
Hence, we deduce that the solution $\solucion(\mos)$ to the 
equation~\eqref{eq:frontera-derecha} exists over 
$m \in  [\ub{\MOS}, +\infty[$. We denote $\MOS_{\infty}=1$. 

\item If there exists an $\mos\in I_+$ such that $\solucion(\mos)=0 $, 
we denote by~$\tilde{\mos}$ the smallest of such~$\mos$.  
We have that $\solucion(\tilde{\mos})=0$ 
and, since $\solucion$ decreases, we have that 
$\solucion(\tilde{\mos})=0\leq \solucion(\mos)\leq 
\solucion(\ub{\MOS})=\ub{\HUM}$ for every 
$\mos\in[\ub{\MOS}, \tilde{\mos}]$.

We consider two subcases, as illustrated in 
Figures~\ref{fig:curva1-frontera} and \ref{fig:curva2-frontera}.
\begin{itemize}
\item 
If $\tilde{\mos} < 1 $,  we have that $\solucion(\tilde{\mos})=0$ 
and that $0\leq \solucion(\mos)\leq\ub{\HUM}$ for all $\mos\in[\ub{\MOS},
\tilde{\mos}]$. 
We denote $\MOS_{\infty}=\tilde{\mos} < 1$.
\begin{figure}
\centering
\includegraphics[width=10cm,height=6cm]{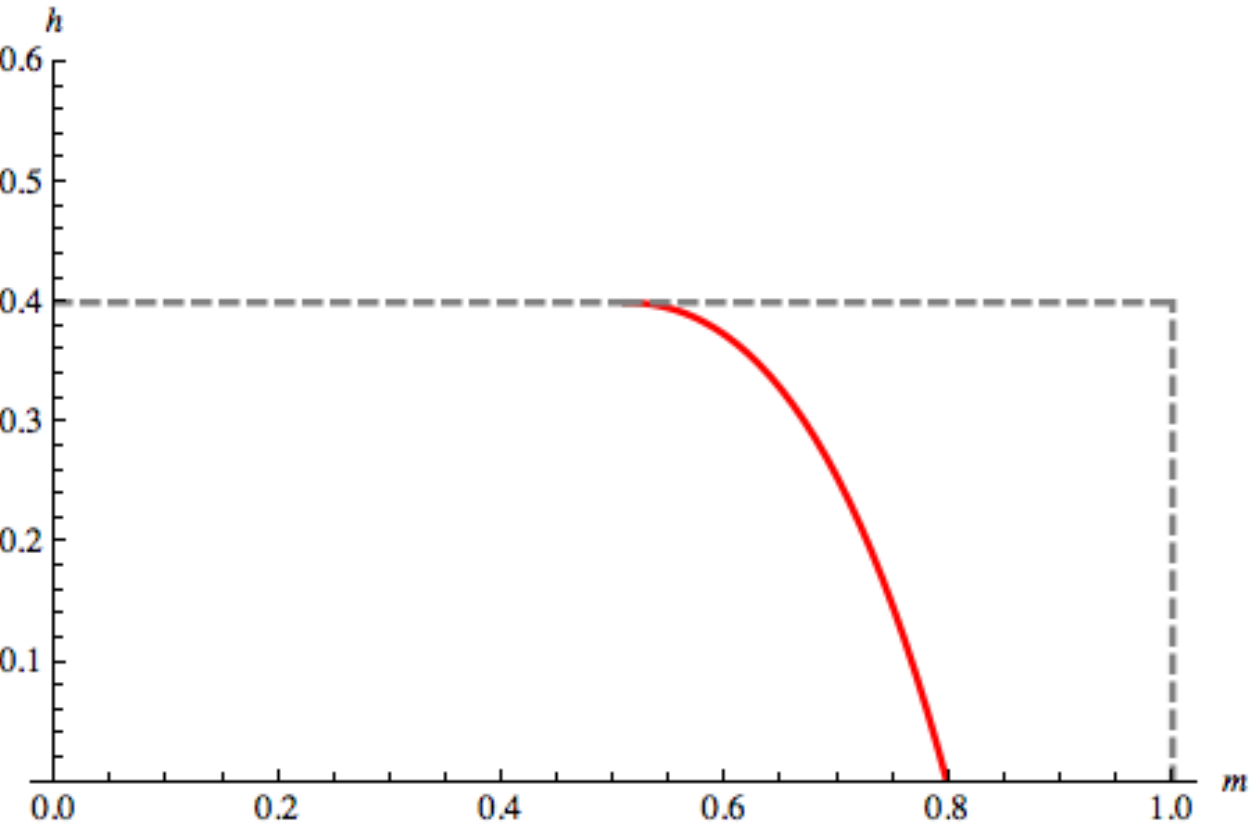}
\caption{Case $\MOS_{\infty}<1$. Solution graph for the differential equation 
\eqref{eq:frontera-derecha}--\eqref{eq:condicion-inicial-frontera},
with parameters $A_{\hum}=$0.31066, $A_{\mos}=$0.02906, $\gamma=0.1$, $\ub{\controlm}=$0.03733 and $\ub{\HUM}=$ 0.4}
\label{fig:curva1-frontera}
\end{figure}

\item 
if $\tilde{\mos} \geq 1 $,  we deduce that 
$0\leq \solucion(\mos)\leq\ub{\HUM}$ for every $\mos\in[\ub{\MOS}, 1]$. 
We denote $\MOS_{\infty}=1$.
\begin{figure}
\centering
\includegraphics[width=10cm,height=6cm]{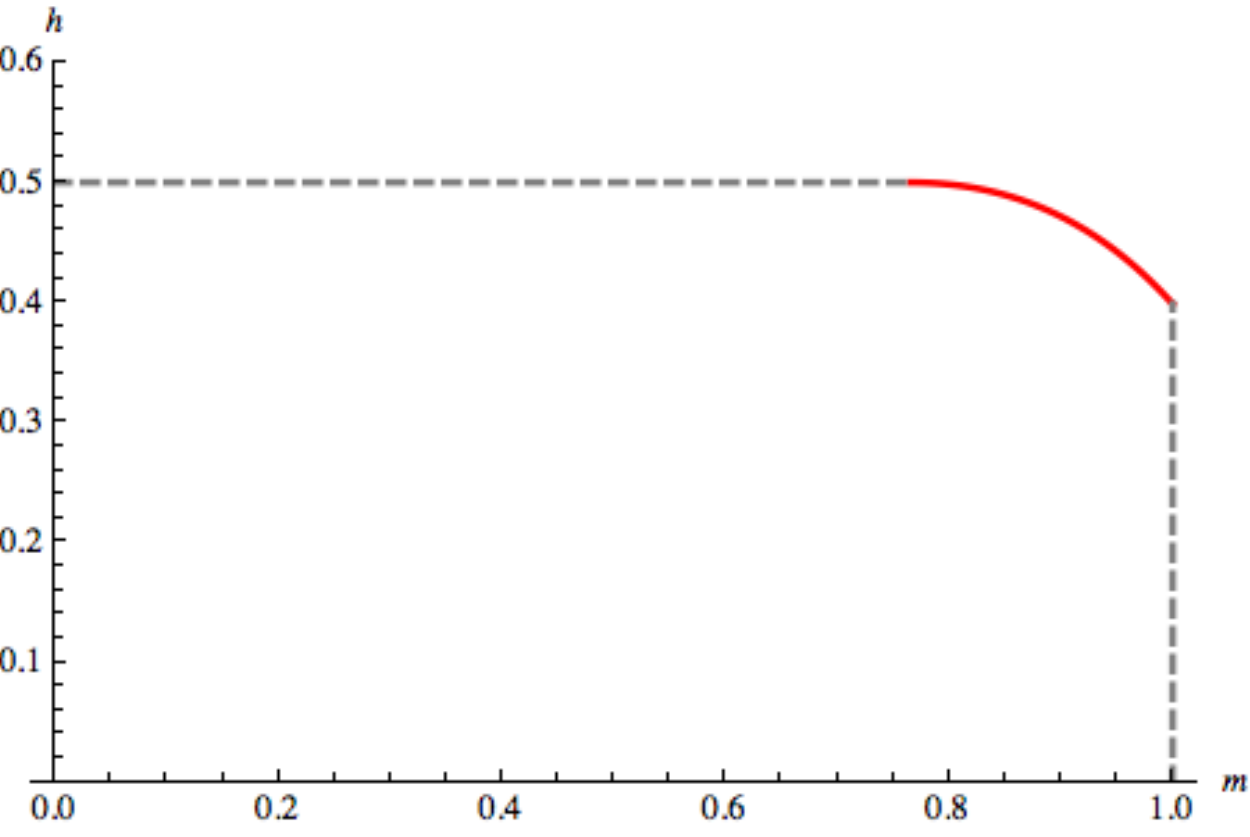}
\caption{Case $\MOS_{\infty}=1$. Solution graph for the differential equation 
\eqref{eq:frontera-derecha}--\eqref{eq:condicion-inicial-frontera},
with parameters $A_{\hum}=$0.31066, $A_{\mos}=$0.02906, $\gamma=0.1$, $\ub{\controlm}=$0.03733 and $\ub{\HUM}=$ 0.5}
\label{fig:curva2-frontera}
\end{figure}
\end{itemize}
\end{itemize}

\end{enumerate}

Hence, we conclude that the solution to the differential 
equation~\eqref{eq:frontera-derecha}, which satisfies the initial 
condition~\eqref{eq:condicion-inicial-frontera}, is unique, 
has the form~\eqref{eq:solucion_Y} and is strictly decreasing.
As $\solucion(\ub{\MOS})=\ub{\HUM}$, the function 
$\solucion: [\ub{\MOS},\MOS_{\infty}]  \to [\solucion(\MOS_{\infty}),\ub{\HUM}]$ 
is a strictly decreasing one-to-one mapping. 
\end{proof}

We know show that the curve generated by the solution~$\solucion$
is an orbit.
\begin{lemma}
\label{lema:frontera-derecha}
The curve 
\begin{equation}
\{ \bp{ \mos, \solucion(\mos) } \mid \mos \in [\ub{\MOS}, \MOS_{\infty} ] \}
\label{eq:curve_frontera-derecha}
\end{equation}
is an orbit of the vector field the vector~$\campo$,
for the control~$\ub{\controlm}$. Indeed, for all 
\begin{equation}
\dub{\mos}_{0} \in [\ub{\MOS}, \MOS_{\infty} ] \text{ and }
\dub{\hum}_{0}=\solucion(\dub{\mos}_{0}) \in 
[\solucion(\MOS_{\infty}),\ub{\HUM}] \eqfinv
\label{eq:on_the_curve_frontera-derecha}
\end{equation}
there exists~$T \geq 0$ such that the orbit of 
the trajectory $ t \in [0,T] \mapsto \bp{\dub{\mos}(t), \dub{\hum}(t)}$ --- 
the solution to~\eqref{eq:model-control-vector} when 
$\controlm(t)=\ub{\controlm}$ and when the starting point is 
$\bp{\dub{\mos}(0), \dub{\hum}(0)}=(\dub{\mos}_{0}, \dub{\hum}_{0})$ ---
is included in the curve~\eqref{eq:curve_frontera-derecha}. 
In addition, 
\begin{equation}
\dub{\mos}(T)=\ub{\MOS} \text{ and } \dub{\hum}(T)=\ub{\MOS} \eqfinp
\label{eq:Tsuite_egalite_and}
\end{equation}
\end{lemma}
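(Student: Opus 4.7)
The plan is to exhibit the curve~\eqref{eq:curve_frontera-derecha} as an invariant graph of the controlled vector field under the constant control $\controlm \equiv \ub{\controlm}$, and then to use the uniform strict negativity of $g_{\mos}$ along that graph to show that the trajectory starting from any point of the graph reaches the upper-left endpoint $(\ub{\MOS},\ub{\HUM})$ in finite time.

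First, I would build a candidate trajectory by solving the reduced scalar Cauchy problem
\begin{equation*}
\frac{d\dub{\mos}}{dt}(t) = g_{\mos}\bp{\dub{\mos}(t), \solucion(\dub{\mos}(t)), \ub{\controlm}} \eqsepv \dub{\mos}(0)=\dub{\mos}_{0} \eqfinv
\end{equation*}
and then setting $\dub{\hum}(t) := \solucion(\dub{\mos}(t))$. Differentiating and using the differential equation~\eqref{eq:frontera-derecha} satisfied by $\solucion$ --- whose coefficient $g_{\mos}$ is nonzero along the curve thanks to~\eqref{eq:isocline_mos} --- one obtains
\begin{equation*}
\frac{d\dub{\hum}}{dt}(t) = \solucion'(\dub{\mos}(t)) \, g_{\mos}\bp{\dub{\mos}(t), \dub{\hum}(t), \ub{\controlm}} = g_{\hum}\bp{\dub{\mos}(t), \dub{\hum}(t)} \eqfinp
\end{equation*}
So $t \mapsto (\dub{\mos}(t), \dub{\hum}(t))$ solves the autonomous system~\eqref{eq:model-control-vector} with $\controlm \equiv \ub{\controlm}$ and passes through $(\dub{\mos}_{0}, \dub{\hum}_{0})$. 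The smoothness of $\campo$ yields Cauchy--Lipschitz uniqueness for~\eqref{eq:model-control-vector}, so this is \emph{the} trajectory considered in the Lemma, and it lies on the curve~\eqref{eq:curve_frontera-derecha} by construction.

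Second, the sign bound~\eqref{eq:isocline_mos} provides the uniform estimate
\begin{equation*}
g_{\mos}\bp{\mos, \solucion(\mos), \ub{\controlm}} \leq g_{\mos}(\ub{\MOS}, \ub{\HUM}, \ub{\controlm}) < 0 \eqsepv \forall \mos \in [\ub{\MOS}, \MOS_{\infty}] \eqfinv
\end{equation*}
so $\dub{\mos}(\cdot)$ is strictly decreasing with derivative bounded above by a fixed negative constant. Hence $\dub{\mos}(t)$ stays in $[\ub{\MOS}, \dub{\mos}_{0}] \subset [\ub{\MOS}, \MOS_{\infty}]$ and reaches the value $\ub{\MOS}$ at some finite time $T \leq (\dub{\mos}_{0} - \ub{\MOS})/|g_{\mos}(\ub{\MOS}, \ub{\HUM}, \ub{\controlm})|$. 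At that instant, continuity of $\solucion$ together with $\solucion(\ub{\MOS})=\ub{\HUM}$ delivers $\dub{\hum}(T)=\ub{\HUM}$, which is the content of~\eqref{eq:Tsuite_egalite_and}.

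The main obstacle is verifying that the candidate trajectory does not leave the interval of definition of $\solucion$ before time~$T$; this confinement is granted precisely by the monotonicity of $\dub{\mos}(\cdot)$, itself a consequence of~\eqref{eq:isocline_mos}. Once this confinement is secured, everything reduces to a one-line application of uniqueness for the planar ODE, and the remaining work is a straightforward integration estimate to guarantee finite-time arrival at $\ub{\MOS}$.
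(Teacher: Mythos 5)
Your proof is correct, but it follows a genuinely different route from the paper's. The paper works with the planar trajectory directly: it defines $T$ as the first exit time from the region $\{\mos \geq \ub{\MOS},\ \hum \leq \ub{\HUM}\}$, proves $T<+\infty$ by invoking the global asymptotic stability of the endemic equilibrium $E^*_{\ub{\controlm}}$ (whose $\mos$-coordinate lies strictly below $\ub{\MOS}$), and then shows a posteriori that the trajectory stays on the curve by checking that $t \mapsto \dub{\hum}(t)-\solucion\bp{\dub{\mos}(t)}$ has zero derivative along the flow, thanks to~\eqref{eq:frontera-derecha}. You instead construct the solution a priori as a graph lift: you solve the scalar equation $\dot{\dub{\mos}} = g_{\mos}\bp{\dub{\mos},\solucion(\dub{\mos}),\ub{\controlm}}$, set $\dub{\hum}=\solucion(\dub{\mos})$, verify via~\eqref{eq:frontera-derecha} that this solves the planar system, and identify it with the trajectory of the Lemma by Cauchy--Lipschitz uniqueness. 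Both arguments hinge on the same key fact~\eqref{eq:isocline_mos}, but you use it twice --- once for the confinement of $\dub{\mos}(\cdot)$ in the domain of~$\solucion$ and once for finite-time arrival --- whereas the paper uses it only for confinement and outsources the finiteness of~$T$ to the Poincar\'e--Bendixson/Bendixson--Dulac machinery of \S\ref{fumigacion-constante}. Your route is more elementary and more quantitative: it bypasses the asymptotic-stability argument entirely and produces the explicit bound $T \leq (\dub{\mos}_{0}-\ub{\MOS})/\left|g_{\mos}(\ub{\MOS},\ub{\HUM},\ub{\controlm})\right|$. The trade-off is that the graph-lift construction requires $g_{\mos}$ to be bounded away from zero along the whole curve (so that the orbit really is a graph over~$\mos$), which holds here by~\eqref{eq:isocline_mos}; the paper's conservation-law argument would survive even without that parametrization. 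Your reading of~\eqref{eq:Tsuite_egalite_and} as $\dub{\hum}(T)=\ub{\HUM}$ (rather than the literal $\ub{\MOS}$, evidently a typo in the statement) is the intended one.
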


\begin{proof}
When $\dub{\mos}_{0}=\ub{\MOS}$, then 
\( \solucion(\dub{\mos}_{0}) = \solucion(\ub{\MOS})=\ub{\HUM} \)
by~\eqref{eq:condicion-inicial-frontera}, and the point
\( \bp{\dub{\mos}_{0}, \dub{\hum}_{0}} = (\ub{\MOS},\ub{\HUM}) \)
indeed belongs to the curve~\eqref{eq:condicion-inicial-frontera}.
Therefore~$T=0$.
\medskip

Now, we suppose that $\dub{\mos}_{0}>\ub{\MOS}$, so that 
\( \solucion(\dub{\mos}_{0}) < \ub{\HUM} \) because~$ \solucion$
is strictly decreasing by Lemma~\ref{lema:solucion-frontera-derecha}.
We define
\begin{equation}
T = \inf \{t \geq 0 \mid \dub{\mos}(t) < \ub{\MOS} \text{ or }
\dub{\hum}(t) > \ub{\HUM} \} \eqfinv
\label{eq:T}
\end{equation}
which is such that $T>0$, since 
\( \dub{\mos}(0)=\dub{\mos}_{0}> \ub{\MOS} \) and
\( \dub{\hum}(0)=\solucion(\dub{\mos}_{0}) < \ub{\HUM} \).
\medskip

We prove that $T< +\infty$. 
Since the inequality~\eqref{eq:cota-media_b} is fulfilled by assumption, 
the condition~\eqref{eq:existe-endemico-control} is satisfied, so that
the endemic equilibrium point~$(\mos^*_{\ub{\controlm}},\hum^*_{\ub{\controlm}})$
in~\eqref{eq:endemico-control-maximo} exists.
By definition of an equilibrium point, we have that 
\( g_{\mos}(\mos^*_{\ub{\controlm}},\hum^*_{\ub{\controlm}},\ub{\controlm})
=0 \). Therefore, as the left hand side of inequality~\eqref{eq:cota-media_a} 
can be restated as $\hum^*_{\ub{\controlm}} < \ub{\HUM} $,
we deduce that $ \mos^*_{\ub{\controlm}} < \ub{\MOS} $, 
by~\eqref{eq:isocline_mos}.
As seen in~\S\ref{fumigacion-constante}, the endemic equilibrium 
point~\eqref{eq:endemico-control-maximo} displays global asymptotic 
stability. 
Therefore, $\dub{\mos}(t)\to \mos^*_{\ub{\controlm}}$ when $t \to +\infty$. 
As $\mos^*_{\ub{\controlm}} < \ub{\MOS}$,
we deduce that there is a time~$t$ such that $\dub{\mos}(t) <
\ub{\MOS}$. As a consequence, $T$ as defined in~\eqref{eq:T} is finite:
$T< +\infty$. 
\medskip

We now study the trajectory 
$ t \in [0,T] \mapsto \bp{\dub{\mos}(t), \dub{\hum}(t)}$. 
By definition~\eqref{eq:T} of~$T$, we have that 
\begin{subequations}
  \begin{equation}
\dub{\mos}(T)=\ub{\MOS} \text{ or } \dub{\hum}(T)=\ub{\MOS}
\label{eq:Tsuite_egalite}
  \end{equation}
and that 
\begin{equation}
  \dub{\mos}(t) \geq \ub{\MOS} \text{ and } \dub{\hum}(t) \leq \ub{\HUM}
\eqsepv \forall t \in [0,T] \eqfinp
\label{eq:Tsuite_inegalite}
\end{equation}
\end{subequations}
By~\eqref{eq:isocline_mos} and~\eqref{eq:Tsuite_inegalite}, we deduce that 
\[
\frac{d \dub{\mos}(t) }{dt} = 
g_{\mos}(\dub{\mos}(t),\dub{\hum}(t),\ub{\controlm}) < 0 
\eqsepv \forall t \in [0,T] \eqfinp 
\]
As $ \dub{\mos}_{0} \leq \MOS_{\infty}$ 
by~\eqref{eq:on_the_curve_frontera-derecha}, 
we deduce that $ \dub{\mos}(t) \leq \dub{\mos}_{0} \leq \MOS_{\infty}$, 
for all $t \in [0,T]$.
Together with $\dub{\mos}(t) \geq \ub{\MOS}$, we obtain that 
$ \dub{\mos}(t) \in [\ub{\MOS},\MOS_{\infty}]$, for all $t \in [0,T]$.
Therefore, $\solucion(\dub{\mos}(t))$ is well defined since 
$\solucion: [\ub{\MOS},\MOS_{\infty}]  \to [\solucion(\MOS_{\infty}),\ub{\HUM}]$ 
by~\eqref{eq:solucion_Y}. We have, for all $t \in [0,T]$, 
\begin{align*}
\frac{d}{dt}[\dub{\hum}(t)-\solucion(\dub{\mos}(t))]= &
  g_{\mos}(\dub{\mos}(t),\dub{\hum}(t),\ub{\controlm}) -
\solucion'(\dub{\mos}(t))
g_{\hum}(\dub{\mos}(t),\dub{\hum}(t)) \\
& \text{by~\eqref{eq:model-control-vector} and 
\eqref{eq:campo-control-vector} } \\
=& 0 \text{ by~\eqref{eq:frontera-derecha}.}
\end{align*}
Therefore, for all $t \in [0,T]$, 
\begin{equation}
  \dub{\hum}(t)-\solucion(\dub{\mos}(t))=
\dub{\hum}(0)-\solucion(\dub{\mos}(0))=0
 \text{ by~\eqref{eq:on_the_curve_frontera-derecha},}
\end{equation}
so that the trajectory 
$ t \in [0,T] \mapsto \bp{\dub{\mos}(t), \dub{\hum}(t)}$ 
is included in the curve~\eqref{eq:curve_frontera-derecha}.
\medskip

As a particular case, we have that 
\( \dub{\hum}(T)-\solucion(\dub{\mos}(T)) = 0 \).
As $\dub{\mos}(T)=\ub{\MOS}$ or $\dub{\hum}(T)=\ub{\MOS}$
by~\eqref{eq:Tsuite_egalite}, we conclude that
\begin{equation*}
\dub{\mos}(T)=\ub{\MOS} \text{ and } \dub{\hum}(T)=\ub{\MOS} \eqfinv
\end{equation*}
since the function 
$\solucion: [\ub{\MOS},\MOS_{\infty}]  \to [\solucion(\MOS_{\infty}),\ub{\HUM}]$ 
is a strictly decreasing one-to-one mapping.
We have proven~\eqref{eq:Tsuite_egalite_and}. 
\end{proof}

\begin{lemma}
\label{lema:dominio-viable}
When the inequalities~\eqref{eq:cota-media} are fulfilled, 
the set~$\VV$, defined in~\eqref{eq:nucleo-medio}, is a viability domain 
for the system~\eqref{eq:model-control-vector}--\eqref{eq:restriccion-control}. 
\end{lemma}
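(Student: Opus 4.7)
The plan is to apply Proposition~\ref{prop:geometria-dominio-viables}: I must exhibit, at every point of the piecewise smooth frontier $\partial\VV$, a control $\controlm \in [\lb{\controlm},\ub{\controlm}]$ such that $\campo(\mos,\hum,\controlm)$ is inward-pointing with respect to $\VV$. I would first decompose $\partial\VV$ into its natural pieces: the left axis $\{\mos=0\}\cap\VV$; the bottom axis $\{\hum=0\}\cap\VV$; the top segment $\{(\mos,\ub{\HUM}) \mid 0\leq\mos\leq\ub{\MOS}\}$; the curve $\{(\mos,\solucion(\mos)) \mid \ub{\MOS}\leq\mos\leq\MOS_{\infty}\}$; and, only when $\MOS_{\infty}=1$, the right segment $\{(1,\hum) \mid 0\leq\hum\leq\solucion(1)\}$.

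On the two axes, and on the right segment when it is present, the sign calculations carried out in the proof of Proposition~\ref{prop:restriccion-debil} already show that $\campo$ points inward for any admissible control, so I would simply quote them. On the top segment, I would observe that $\mos \mapsto g_{\hum}(\mos,\ub{\HUM}) = A_{\hum}\,\mos(1-\ub{\HUM}) - \gamma\,\ub{\HUM}$ is nondecreasing and vanishes at $\mos=\ub{\MOS}$ by~\eqref{eq:carac_hat_mos}; hence $g_{\hum}(\mos,\ub{\HUM}) \leq 0$ for every $\mos \leq \ub{\MOS}$, so that $\campo$ points downward into $\VV$, again for any admissible control.

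The crucial piece is the curve $\{\hum=\solucion(\mos)\}$, for which I would select the maximum-fumigation control $\controlm=\ub{\controlm}$. By Lemma~\ref{lema:frontera-derecha}, this curve is an orbit of $\campo(\cdot,\cdot,\ub{\controlm})$, so that $\campo(\mos,\solucion(\mos),\ub{\controlm})$ is tangent to the curve at each such point; tangent vectors belong to the contingent cone of $\VV$ and thus qualify as inward-pointing. The step I expect to be the main obstacle is checking that this patching of controls is coherent at the corner point $(\ub{\MOS},\ub{\HUM})$, where the top segment meets the curve and where two different ``inward'' conditions must coexist. There I would invoke step~1 of the proof of Lemma~\ref{lema:solucion-frontera-derecha}, which gives $g_{\mos}(\ub{\MOS},\ub{\HUM},\ub{\controlm})<0$, together with $g_{\hum}(\ub{\MOS},\ub{\HUM})=0$ from~\eqref{eq:carac_hat_mos}: the vector field at this corner slides strictly horizontally to the left along the top segment, which is a valid tangent direction to $\VV$ at this corner. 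An analogous remark handles the endpoint $(\MOS_{\infty},\solucion(\MOS_{\infty}))$ via Lemma~\ref{lema:frontera-derecha}. Combining these verifications over all pieces of $\partial\VV$ concludes that $\VV$ is a viability domain.
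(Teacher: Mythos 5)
Your proposal is correct and follows essentially the same route as the paper: the same decomposition of~$\partial\VV$, the same reuse of the sign computations from Proposition~\ref{prop:restriccion-debil} on the two axes and the right edge, the same choice of the maximal control~$\ub{\controlm}$ on the curve~$\hum=\solucion(\mos)$, and the same corner analysis at~$(\ub{\MOS},\ub{\HUM})$ based on $g_{\mos}(\ub{\MOS},\ub{\HUM},\ub{\controlm})<0$ and $g_{\hum}(\ub{\MOS},\ub{\HUM})=0$. The only difference is presentational: the paper checks the dual condition that the Hamiltonian~$\hamiltonian$ against outward normals (and, at kinks, against the cone they generate) is nonpositive, whereas you check the primal condition that the field lies in the tangent cone, invoking the orbit property of Lemma~\ref{lema:frontera-derecha} on the curve and at its endpoints --- the two formulations are equivalent.
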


\begin{proof}
Let the \emph{Hamiltonian}~$\hamiltonian$ be defined, for every vector 
$(n_{\mos}, n_{\hum})$, every state $(\mos,\hum)$ and control~$\controlm$, by
\begin{equation}
\label{eq:hamiltoniano}
\hamiltonian(\mos,\hum,n_{\mos},n_{\hum},\controlm)
=g_{\mos}(\mos,\hum,\controlm) n_{\mos}+g_{\hum}(\mos,\hum) n_{\hum}
\eqfinp
\end{equation}
The Hamiltonian is the scalar product between the vectors~$\campo$ 
and~$(n_{\mos}, n_{\hum})$.

We will check that, for any given point along the 
piecewise-smooth frontier of the set~$\VV$, defined in~\eqref{eq:nucleo-medio}, 
there is at least one  control~$\controlm \in
[\lb{\controlm},\ub{\controlm}]$ such that 
the value~\eqref{eq:hamiltoniano} of the Hamiltonian 
is lower than or equal to zero when 
$(n_{\mos}, n_{\hum})$ is a normal outward-pointing vector 
(with respect to the set~$\VV$). 
For kink points between two smooth parts, we will do the same
but with the cone generated by two normal outward-pointing vectors, 
corresponding to each of the smooth parts. 

We will divide the frontier of 
the set~$\VV$, defined in~\eqref{eq:nucleo-medio}, 
in five or seven parts 
(see Figures~\ref{fig:curva1-frontera} and \ref{fig:curva2-frontera})
as follows. 


\begin{enumerate}[(a)]
\item  On the horizontal segment~$\{(\mos, 0)| 0<\mos < \MOS_{\infty}\}$, 
on the vertical segment~$\{(0, \hum)| 0<\hum < \ub{\HUM}\}$ 
 and at the points~$(0,0)$ and $(0, \ub{\HUM})$,
all vectors~$\campo$ point towards the inside of
the set~$\VV$, defined in~\eqref{eq:nucleo-medio}.
Indeed, it suffices to copy the proof of 
Proposition~\ref{prop:restriccion-debil}. 

\item Along the segment $\{(\mos, \ub{\HUM})| 0<\mos<\ub{\MOS}\}$, 
a normal outward-pointing vector is
\begin{equation*}
\left (
\begin{array}{c}
n_{\mos}\\ 
n_{\hum}
\end{array}
\right )=\left (
\begin{array}{c}
0\\
1
\end{array}
\right ) \eqfinp
\end{equation*}
Therefore, for any control $\controlm \in
[\lb{\controlm},\ub{\controlm}]$,
the value~\eqref{eq:hamiltoniano} of the Hamiltonian is
\begin{eqnarray*}
	\hamiltonian(\mos,\ub{\HUM},n_{\mos},n_{\hum},\controlm)
= g_{\hum} (\mos,\ub{\HUM}) \times 1 <  0 \eqfinv
\end{eqnarray*}
by~\eqref{eq:carac_hat_mos} and because $\mos<\ub{\MOS}$.

\item Along the curve 
$\{(\mos,\solucion(\mos))|\ub{\MOS}<\mos<\MOS_{\infty}\}$,
a normal outward-pointing vector is  
\begin{equation*}
\left (
\begin{array}{c}
n_{\mos}\\
n_{\hum}
\end{array}
\right )= 
\left (
\begin{array}{c}
-\solucion'(\mos)\\
1
\end{array}
\right ) 
 \eqfinp
\end{equation*}
Therefore, for the control~$\ub{\controlm}$, 
the value~\eqref{eq:hamiltoniano} of the Hamiltonian is
\[
\hamiltonian(\mos,\ub{\HUM},n_{\mos},n_{\hum},\ub{\controlm})=
-g_{\mos}(\mos, \solucion(\mos), \ub{\controlm}) \solucion'(\mos)
+g_{\hum}(\mos, \solucion(\mos)) \times 1 = 0  \eqfinv
\]
because the function~$\solucion$ is the solution to the 
differential equation~\eqref{eq:frontera-derecha}. 
 
\item The point $(\ub{\MOS},\ub{\HUM})$ is a kink, to which is attached
a cone of normal vectors generated by the two following
normal outward-pointing vectors.
\begin{itemize}
\item A normal outward-pointing vector to the horizontal segment 
$\{(\mos, \ub{\HUM})| 0\leq\mos\leq\ub{\MOS}\}$ 
at the kink point $(\ub{\MOS},\ub{\HUM})$ is
\begin{equation*}
\left (
\begin{array}{c}
n_{\mos}\\
n_{\hum}
\end{array}
\right )=
 \left (
\begin{array}{c}
0\\
1 
\end{array}
\right ) \eqfinp
\end{equation*}
Therefore, for any control $\controlm \in [\lb{\controlm},\ub{\controlm}]$, 
the value~\eqref{eq:hamiltoniano} of the Hamiltonian is
\[
\hamiltonian(\ub{\MOS},\ub{\HUM},n_{\mos},n_{\hum},\controlm) = 
g_{\hum}(\ub{\MOS}, \ub{\HUM}) \times 1 =0 
\text{ by~\eqref{eq:carac_hat_mos}.}
\]

\item A normal outward-pointing vector to the curve 
$\{(\mos,\solucion(\mos))|\ub{\MOS}\leq\mos\leq\MOS_{\infty}\}$ 
at the kink point $(\ub{\MOS},\ub{\HUM})$ is
\begin{equation*}
\left (
\begin{array}{c}
n_{\mos}\\
n_{\hum}
\end{array}
\right )= 
\left (
\begin{array}{c}
-\solucion'({\MOS})\\
1
\end{array}
\right ) = \left (
\begin{array}{c}
0\\
1
\end{array}
\right )
\text{ by~\eqref{eq:frontera-derecha-equivalente} and~\eqref{eq:carac_hat_mos}.}
\end{equation*}
Therefore, for any control $\controlm \in
[\lb{\controlm},\ub{\controlm}]$, 
the value~\eqref{eq:hamiltoniano} of the Hamiltonian is
\[
\hamiltonian(\ub{\MOS},\ub{\HUM},n_{\mos},n_{\hum},\controlm) = 
g_{\hum}(\ub{\MOS}, \ub{\HUM}) \times 1 =0 
\text{ by~\eqref{eq:carac_hat_mos}.}
\]
\end{itemize}
We conclude that, for any control $\controlm \in
[\lb{\controlm},\ub{\controlm}]$, 
the value~\eqref{eq:hamiltoniano} of the Hamiltonian is
zero for any combination of the two normal outward-pointing vectors above. 
As a consequence, all vectors~$\campo$ point towards the inside of
the set~$\VV$, defined in~\eqref{eq:nucleo-medio},
at the kink point $(\ub{\MOS},\ub{\HUM})$.

\item When $\MOS_{\infty}<1$ in~\eqref{eq:solucion_Y} 
(see Figure~\ref{fig:curva1-frontera}), there only remains to consider 
the kink point $(\MOS_{\infty}, 0)$.
\begin{itemize}
\item A normal outward-pointing vector to the horizontal segment 
$\{(\mos, 0)| 0\leq\mos\leq\MOS_{\infty}\}$ at the 
kink point $(\MOS_{\infty},0)$ is
\begin{equation*}
\left (
\begin{array}{c}
n_{\mos}\\ 
n_{\hum}
\end{array}
\right )=\left (
\begin{array}{c}
1 \\
0
\end{array}
\right ) \eqfinp
\end{equation*}
Therefore, for any control $\controlm \in
[\lb{\controlm},\ub{\controlm}]$, 
the value~\eqref{eq:hamiltoniano} of the Hamiltonian is
\[
\hamiltonian(\MOS_{\infty},0,n_{\mos},n_{\hum},\controlm) = 
g_{\mos}(\MOS_{\infty}, 0,\controlm) \times 1 = -\controlm \MOS_{\infty} \leq 
- \lb{\controlm}\MOS_{\infty} \leq 0 \eqfinp 
\]

\item A normal outward-pointing vector to the curve 
$\{(\mos,\solucion(\mos))|\ub{\MOS}\leq\mos\leq\MOS_{\infty}\}$ 
at the kink point $(\MOS_{\infty},0)$ is
\begin{equation*}
\left (
\begin{array}{c}
n_{\mos}\\
n_{\hum}
\end{array}
\right )= 
\left (
\begin{array}{c}
-\solucion'(\MOS_{\infty})\\
1
\end{array}
\right )
 \eqfinp
\end{equation*}
Therefore, for the control $\ub{\controlm}$, 
the value~\eqref{eq:hamiltoniano} of the Hamiltonian is
\begin{eqnarray*}
\hamiltonian(\MOS_{\infty},0,n_{\mos},n_{\hum},\ub{\controlm}) & = &
-g_{\mos}(\MOS_{\infty}, 0,\ub{\controlm})\solucion'(\MOS_{\infty}) +
g_{\hum}(\MOS_{\infty}, 0)  \times 1 \\
&=& 0 \text{ by~\eqref{eq:frontera-derecha}.}
\end{eqnarray*}
\end{itemize}
We conclude that, for the control~$\ub{\controlm}$, 
the value~\eqref{eq:hamiltoniano} of the Hamiltonian is
zero for any combination of the two normal outward-pointing vectors above. 
As a consequence, the vector~$\campo$, for the control~$\ub{\controlm}$,
points towards the inside of
the set~$\VV$, defined in~\eqref{eq:nucleo-medio}, 
at the kink point $(\MOS_{\infty},0)$.

\item When $\MOS_{\infty}=1$ in~\eqref{eq:solucion_Y} 
(see Figure~\ref{fig:curva2-frontera}), we have to consider 
the vertical segment $\{(1,\hum)| 0\leq\hum < \solucion(1) \}$.
A normal outward-pointing vector to the vertical segment 
$\{(1,\hum)| 0\leq\hum < \solucion(1) \}$ has the form
\begin{equation*}
\left (
\begin{array}{c}
n_{\mos}\\ 
n_{\hum}
\end{array}
\right )=\left (
\begin{array}{c}
1\\
0
\end{array}
\right ) \eqfinp
\end{equation*}
Therefore, for any control $\controlm \in
[\lb{\controlm},\ub{\controlm}]$, 
the value~\eqref{eq:hamiltoniano} of the Hamiltonian is
\[
	\hamiltonian(1,\hum,n_{\mos},n_{\hum},\controlm) =
g_{\mos}(1, \hum, \controlm) \times 1 = -\controlm \leq -
\lb{\controlm} \leq 0 \eqfinp
\]
As a consequence, all vectors~$\campo$ point towards the inside of
the set~$\VV$, defined in~\eqref{eq:nucleo-medio}, along 
the vertical segment $\{(1,\hum)| 0\leq\hum < \solucion(1) \}$.

\item When $\MOS_{\infty}=1$ in~\eqref{eq:solucion_Y} 
(see Figure~\ref{fig:curva2-frontera}), there remains to consider 
the kink point $(1, \solucion(1))$.

\begin{itemize}
\item A normal outward-pointing vector to the vertical segment 
$\{(1,\hum)| 0\leq\hum\leq \solucion(1) \}$ 
at the kink point $(1, \solucion(1))$ is 
\begin{equation*}
\left (
\begin{array}{c}
n_{\mos}\\ 
n_{\hum}
\end{array}
\right )=\left (
\begin{array}{c}
1 \\
0
\end{array}
\right ) \eqfinp
\end{equation*}
Therefore, for any control $\controlm \in
[\lb{\controlm},\ub{\controlm}]$, 
the value~\eqref{eq:hamiltoniano} of the Hamiltonian is
\[
\hamiltonian(1,\solucion(1),n_{\mos},n_{\hum},\controlm) =
g_{\mos}(1, \solucion(1), \controlm) \times 1 = 
-\controlm \leq -\lb{\controlm} < 0 \eqfinp
\]

\item A normal outward-pointing vector to the curve 
$\{(\mos,\solucion(\mos))|\ub{\MOS}<\mos<\MOS_{\infty}\}$ 
at the kink point $(1, \solucion(1))$ is 
\begin{equation*}
\left (
\begin{array}{c}
n_{\mos}\\
n_{\hum}
\end{array}
\right )= 
\left (
\begin{array}{c}
-\solucion'(1)\\
1
\end{array}
\right )
 \eqfinp
\end{equation*}
Therefore, for the control $\ub{\controlm}$, 
the value~\eqref{eq:hamiltoniano} of the Hamiltonian is
\begin{eqnarray*}
\hamiltonian(1,\solucion(1),n_{\mos},n_{\hum},\ub{\controlm}) & = &
-g_{\mos}(1, \solucion(1),\ub{\controlm})\solucion'(1) + 
g_{\hum}(1, \solucion(1))  \times 1 \\
&=& 0 \text{ by~\eqref{eq:frontera-derecha}.}
\end{eqnarray*}
\end{itemize}
We conclude that, for the control~$\ub{\controlm}$, 
the value~\eqref{eq:hamiltoniano} of the Hamiltonian is
zero for any combination of the two normal outward-pointing vectors above. 
As a consequence, the vector~$\campo$, for the control~$\ub{\controlm}$,
points towards the inside of
the set~$\VV$, defined in~\eqref{eq:nucleo-medio}, 
at the kink point $(1, \solucion(1))$.
\end{enumerate}

From Proposition~\ref{prop:geometria-dominio-viables}, 
we can conclude that $\VV$, defined in~\eqref{eq:nucleo-medio}, 
is a viability domain. 
\end{proof}

\begin{lemma}
\label{lema:mayor-dominio-viable}
When the inequalities~\eqref{eq:cota-media} are fulfilled, 
the set~$\VV$, defined in~\eqref{eq:nucleo-medio}, 
is the largest viability domain for 
the system~\eqref{eq:model-control-vector}, 
within the constraint set~\eqref{eq:conjunto_de_restricciones}.
\end{lemma}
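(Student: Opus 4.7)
My plan is to establish that no point in $\constraintset \setminus \VV$ can belong to a viability domain contained in $\constraintset$; together with Lemma~\ref{lema:dominio-viable} this identifies $\VV$ as the largest viability domain within the constraint set. Concretely, for every $(\mos_0, \hum_0) \in \constraintset \setminus \VV$ and every admissible control $\controlm(\cdot)$ as in~\eqref{eq:restriccion-control}, I will exhibit a time $t > 0$ at which the trajectory of~\eqref{eq:model-control-vector} starting at $(\mos_0, \hum_0)$ satisfies $\hum(t) > \ub{\HUM}$, thereby violating~\eqref{eq:restriccion-estado}.

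First, I reduce to the constant control $\ub{\controlm}$. By Proposition~\ref{prop-cuasimonotono}, the $\ub{\controlm}$-trajectory from $(\mos_0, \hum_0)$ is componentwise smaller than every other admissible trajectory from the same initial state. Hence it suffices to produce a time $t$ with $\ub{\hum}(t) > \ub{\HUM}$ along this pointwise smallest trajectory: any other admissible choice then also violates the constraint.

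Next I exploit the orbit structure of the curve $\solucion$, which Lemma~\ref{lema:frontera-derecha} identifies as an orbit of the $\ub{\controlm}$-dynamics reaching the kink $(\ub{\MOS}, \ub{\HUM})$ in finite time. Depending on where $(\mos_0, \hum_0)$ sits in $\constraintset \setminus \VV$, I select a reference initial state $(\dub{\mos}_0, \dub{\hum}_0)$ on the upper-right frontier of $\VV$ that is componentwise no larger than $(\mos_0, \hum_0)$: when $\ub{\MOS} \leq \mos_0 \leq \MOS_{\infty}$ with $\hum_0 > \solucion(\mos_0)$, I take $(\dub{\mos}_0, \dub{\hum}_0) = (\mos_0, \solucion(\mos_0))$; when $\mos_0 > \MOS_{\infty}$, which is possible only if $\MOS_{\infty} < 1$ (and so $\solucion(\MOS_{\infty}) = 0$), I take $(\dub{\mos}_0, \dub{\hum}_0) = (\MOS_{\infty}, 0)$. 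Lemma~\ref{lema:frontera-derecha} then supplies a finite time $T$ at which the $\ub{\controlm}$-trajectory started at $(\dub{\mos}_0, \dub{\hum}_0)$ reaches $(\ub{\MOS}, \ub{\HUM})$. Applying the quasimonotonicity argument behind Proposition~\ref{prop-cuasimonotono} to the two $\ub{\controlm}$-trajectories (same vector field, different initial data), I obtain $\ub{\mos}(T) \geq \ub{\MOS}$ and $\ub{\hum}(T) \geq \ub{\HUM}$.

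The main obstacle is upgrading these weak inequalities to a strict constraint violation. Because the two $\ub{\controlm}$-trajectories solve the same ODE with distinct initial conditions, Cauchy-Lipschitz uniqueness forbids them from ever coinciding; combined with the componentwise domination at time $T$, at least one of the two inequalities must be strict. Either $\ub{\hum}(T) > \ub{\HUM}$ already, in which case we are done, or $\ub{\hum}(T) = \ub{\HUM}$ with $\ub{\mos}(T) > \ub{\MOS}$. In the second subcase, $g_{\hum}(\mos, \ub{\HUM}) = A_{\hum} \mos (1 - \ub{\HUM}) - \gamma \ub{\HUM}$ vanishes at $\mos = \ub{\MOS}$ by~\eqref{eq:carac_hat_mos} and is strictly increasing in $\mos$, so $g_{\hum}(\ub{\mos}(T), \ub{\HUM}) > 0$; therefore $\ub{\hum}$ is strictly increasing at time $T$ and $\ub{\hum}(t) > \ub{\HUM}$ for $t$ slightly larger than $T$. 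Either way the constraint is violated, completing the proof together with Lemma~\ref{lema:dominio-viable}.
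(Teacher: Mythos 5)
Your proof is correct and follows essentially the same route as the paper's: reduce to the extremal control~$\ub{\controlm}$ by the comparison result, use Lemma~\ref{lema:frontera-derecha} to drive a reference trajectory starting on the frontier curve up to $(\ub{\MOS},\ub{\HUM})$ in finite time~$T$, and conclude from componentwise domination plus uniqueness of solutions. The only (harmless) differences are that you choose the reference point vertically below $(\mos_0,\hum_0)$ where the paper takes the point on the curve at the same ordinate~$\hum_0$, and that you handle the borderline case $\ub{\hum}(T)=\ub{\HUM}$ by pushing slightly past~$T$ using $g_{\hum}(\ub{\mos}(T),\ub{\HUM})>0$, whereas the paper derives $\dub{\hum}(T)<\ub{\hum}(T)$ directly via a contradiction at the minimum of $\ub{\hum}-\dub{\hum}$.
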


\begin{figure}
\centering
\includegraphics[width=14cm,height=8cm]{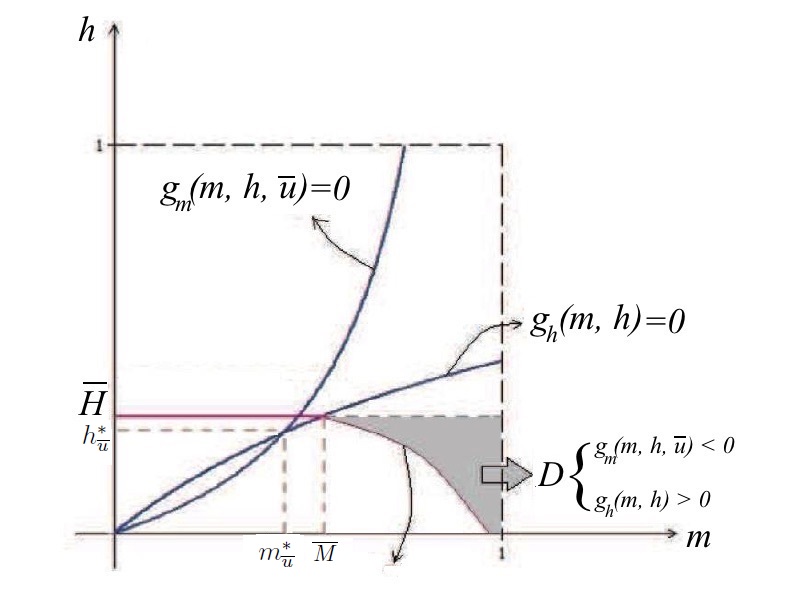}
\caption{The shaded portion of the graph corresponds to the complement of the viability kernel~$\nucleo$ with respect to the constraint set~$\constraintset$, when $\MOS_{\infty}<1$}
\label{fig:complemento-nucleo-medio}
\end{figure}

\begin{proof}
We prove that, for any control trajectory~$\controlm(\cdot)$ 
as in~\eqref{eq:restriccion-control}, 
the trajectory  $ t \mapsto \bp{{\mos}(t), {\hum}(t)}$ solution 
of~\eqref{eq:model-control-vector} and that starts from a point 
\begin{equation}
\bp{{\mos}(0), {\hum}(0)}= (\mos_{0}, \hum_{0}) 
\in \constraintset\backslash\VV
\label{eq:outsideVV}
\end{equation}
does not satisfy the constraint~\eqref{eq:restriccion-estado} for 
at least one $t>0$. 
In Figure~\ref{fig:complemento-nucleo-medio}, 
the set~$\constraintset\backslash\VV$ is represented by 
the shaded part (in the case when $\MOS_{\infty}<1$).
\medskip


First, we examine the point $(\mos_{0}, \hum_{0}) 
\in \constraintset\backslash\VV $.
On the one hand, as $(\mos_{0}, \hum_{0})\in \constraintset$,
we have that $ 0 \leq \hum_{0}\leq \ub{\HUM}$.
On the other hand, since $(\mos_{0}, \hum_{0})\notin \VV$, where
the set~$\VV$ is defined in~\eqref{eq:nucleo-medio}, 
we deduce from Lemma~\ref{lema:solucion-frontera-derecha},
and especially from~\eqref{eq:solucion_Y}, that 
\begin{align}
(\mos_{0}, \hum_{0}) \in \constraintset\backslash\VV & \iff  
\ub{\MOS} < \mos_{0} \leq 1 \eqsepv 0 \leq \hum_{0}\leq \ub{\HUM} 
\label{eq:constraintsetbackslashVV}  \\ 
& \text{ and } \nonumber 
\begin{cases}
\text{either \quad} & \mos_{0} > \MOS_{\infty} \eqfinv\\
\text{or \quad} & \mos_{0} \leq \MOS_{\infty} \text{ and } 
\solucion(\mos_{0})<\hum_{0} 
\eqfinp 
\end{cases}
\end{align}
Notice that, if $\hum_{0}= \ub{\HUM} $, then 
\begin{align*}
\frac{d {\hum}(t) }{dt}_{|t=0} =& g_{\hum}({\mos}_{0},\ub{\HUM})
\text{ by~\eqref{eq:model-control-vector} and 
\eqref{eq:campo-control-vector} } \\ 
>& g_{\hum}(\ub{\MOS},\ub{\HUM}) \text{ by~\eqref{eq:campo-control-vector} 
and } \ub{\MOS} < \mos_{0} \\ 
=& 0 \text{ by~\eqref{eq:carac_hat_mos}.} 
\end{align*}
Therefore, ${\hum}(t) > \ub{\HUM} $, for $t>0$ small enough.
As a consequence, we will concentrate on the case
$\hum_{0} < \ub{\HUM} $.
\medskip

Second, we show that there exists a point $(\dub{\mos}_{0}, \dub{\hum}_{0})$ 
such that
\begin{equation}
\dub{\mos}_{0} < {\mos}_{0} \text{ and } 
\dub{\mos}_{0} \in [\ub{\MOS},\MOS_{\infty}] \text{ and } 
\dub{\hum}_{0}=\hum_{0}=\solucion(\dub{\mos}_{0}) \leq \ub{\HUM} \eqfinp
\end{equation}
Of course, we take $\dub{\hum}_{0}=\hum_{0}$. 
To prove the existence of~$\dub{\mos}_{0} \in [\ub{\MOS},\MOS_{\infty}]$, 
it suffices to show that 
$\hum_{0} \in [\solucion(\MOS_{\infty}),\ub{\HUM}]$. Indeed, we know 
from Lemma~\ref{lema:solucion-frontera-derecha},
and especially~\eqref{eq:solucion_Y} that the function
$\solucion: [\ub{\MOS},\MOS_{\infty}]  \to [\solucion(\MOS_{\infty}),\ub{\HUM}]$ 
is $C^1$ and strictly decreasing. 
Now, from~\eqref{eq:constraintsetbackslashVV}, we deduce that 
\begin{itemize}
\item if $\MOS_{\infty}=1$, then 
\( \mos_{0} \leq \MOS_{\infty}=1 \) { and } 
\( \solucion(\mos_{0})<\hum_{0} \leq \ub{\HUM} \);
as the function~$\solucion$ is strictly decreasing, we conclude that
\(\solucion(\MOS_{\infty}) \leq \solucion(\mos_{0})<\hum_{0} \leq \ub{\HUM} \),
hence that $\hum_{0} \in [\solucion(\MOS_{\infty}),\ub{\HUM}]$;
\item if $\MOS_{\infty}<1$, then $\solucion(\MOS_{\infty})=0$ 
by~\eqref{eq:solucion_Y}; as $0 \leq \hum_{0}\leq \ub{\HUM} $,
we conclude that
\( 0=\solucion(\MOS_{\infty}) <\hum_{0} \leq \ub{\HUM} \),
hence that $\hum_{0} \in [\solucion(\MOS_{\infty}),\ub{\HUM}]$. 
\end{itemize} 
Notice in passing that, as the function~$\solucion$ is strictly decreasing, we 
have that
\begin{equation}
  \dub{\mos}_{0} = \ub{\MOS} \iff \dub{\hum}_{0}=\hum_{0}=\ub{\HUM} \eqfinp
\label{eq:in_passing}
\end{equation}
We have proved that $\dub{\mos}_{0}$ exists and that 
$\dub{\mos}_{0} \in [\ub{\MOS},\MOS_{\infty}]$. 
There remains to show that \( \dub{\mos}_{0} < {\mos}_{0} \).
Now, from $\dub{\mos}_{0} \in [\ub{\MOS},\MOS_{\infty}]$ and 
$ \dub{\hum}_{0}=\hum_{0}=\solucion(\dub{\mos}_{0}) $, we deduce that 
$(\dub{\mos}_{0},\dub{\hum}_{0})=(\dub{\mos}_{0},{\hum}_{0}) \in \VV$, 
defined in~\eqref{eq:nucleo-medio}. By definition~\eqref{eq:nucleo-medio} 
of~$\VV$, using the property that the function~$\solucion$ 
is strictly decreasing, we have that  
\[
(\dub{\mos}_{0},\dub{\hum}_{0}) =(\dub{\mos}_{0},{\hum}_{0}) \in \VV \Rightarrow 
({\mos},{\hum}_{0}) \in \VV \eqsepv \forall {\mos} \leq \dub{\mos}_{0}
\eqfinp
\]
As $({\mos}_{0},{\hum}_{0}) \not\in \VV$ by assumption~\eqref{eq:outsideVV},
we conclude that \( \dub{\mos}_{0} < {\mos}_{0} \).

From now on, we suppose that $\hum_{0}<\ub{\HUM} $, so that,
summing up the properties shown above,  
the point $(\dub{\mos}_{0}, \dub{\hum}_{0})$ is such that
\begin{equation}
\dub{\mos}_{0} < {\mos}_{0} \text{ and } 
\dub{\mos}_{0} \in ]\ub{\MOS},\MOS_{\infty}] \text{ and } 
\dub{\hum}_{0}=\hum_{0}=\solucion(\dub{\mos}_{0}) < \ub{\HUM} \eqfinp
\label{eq:sobre_la_frontera_derecha}
\end{equation}
\medskip

Now, in addition to the trajectory $ t \mapsto \bp{{\mos}(t), {\hum}(t)}$ 
that is the solution to~\eqref{eq:model-control-vector} for 
the control trajectory~$\controlm(\cdot)$ when 
$\bp{{\mos}(0), {\hum}(0)}=({\mos}_{0}, {\hum}_{0})$,
we study the trajectories
\begin{itemize}
\item $ t \mapsto \bp{\dub{\mos}(t), \dub{\hum}(t)}$,
the solution to~\eqref{eq:model-control-vector} when 
$\controlm(t)=\ub{\controlm}$ and when the starting point is 
$\bp{\dub{\mos}(0), \dub{\hum}(0)}=(\dub{\mos}_{0}, \dub{\hum}_{0})
=(\dub{\mos}_{0}, {\hum}_{0})$ 
as in~\eqref{eq:sobre_la_frontera_derecha}, that is, 
on the right of the frontier of~$\VV$ (the bottom
frontier of the shaded part in
Figure~\ref{fig:complemento-nucleo-medio});
\item $ t \mapsto \bp{\ub{\mos}(t), \ub{\hum}(t)}$,
the solution to~\eqref{eq:model-control-vector} when 
$\controlm(t)=\ub{\controlm}$ and when the starting point is 
$\bp{\ub{\mos}(0), \ub{\hum}(0)}=({\mos}_{0}, {\hum}_{0})$.
\end{itemize}
We have the following inequalities between initial conditions:
\[
\bp{\dub{\mos}(0), \dub{\hum}(0)}=(\dub{\mos}_{0}, \dub{\hum}_{0})
\leq ({\mos}_{0}, {\hum}_{0})=\bp{{\mos}(0), {\hum}(0)}
= \bp{\ub{\mos}(0), \ub{\hum}(0)} \eqfinp
\]
By Proposition~\ref{prop-cuasimonotono}, we deduce that 
\[
\bp{\dub{\mos}(t), \dub{\hum}(t)} \leq \bp{\ub{\mos}(t), \ub{\hum}(t)}
\leq \bp{{\mos}(t), {\hum}(t)} 
\eqsepv \forall t \geq 0 \eqfinp
\]
By Lemma~\ref{lema:frontera-derecha}, we obtain in particular that 
there exists $T> 0$ such that 
\[
\ub{\HUM} = \dub{\hum}(T) \leq \ub{\hum}(T) \leq {\hum}(T) \eqfinp
\]
Here, $T>0$ because 
(as can be seen in the proof of Lemma~\ref{lema:frontera-derecha}),
\[
\dub{\mos}(0)=\dub{\mos}_{0} > \ub{\MOS} \text{ and } 
\dub{\hum}(0)=\dub{\hum}_{0} < \ub{\HUM} \eqfinp
\]
We prove that $\dub{\hum}(T) < \ub{\hum}(T) $. 
\medskip

Suppose, by contradiction, that $\dub{\hum}(T) = \ub{\hum}(T) $. 
Therefore, the function \\
$ t \geq 0 \mapsto \ub{\hum}(t)-\dub{\hum}(t)$ is 
nonnegative with a minimum at~$T>0$, so that 
\[
\frac{d \ub{\hum}(t)}{dt}_{|t=T} - \frac{d \dub{\hum}(t)}{dt}_{|t=T} =0 \eqfinp
\]
From~\eqref{eq:model-control-vector}, we deduce that 
\[
A_{\hum} \ub{\mos}(T) (1-\ub{\hum}(T))-\gamma \ub{\hum}(T) =
A_{\hum} \dub{\mos}(T) (1-\dub{\hum}(T))-\gamma \dub{\hum}(T) \eqfinp
\]
As $\dub{\hum}(T) = \ub{\hum}(T) < 1$, we obtain that
$ \ub{\mos}(T)=\dub{\mos}(T)$.
Therefore, we have 
\[
\bp{\dub{\mos}(T), \dub{\hum}(T)} = \bp{\ub{\mos}(T), \ub{\hum}(T)}
\eqfinp 
\]
This is impossible because the two trajectories
$ t \mapsto \bp{\dub{\mos}(t), \dub{\hum}(t)}$,
and $ t \mapsto \bp{\ub{\mos}(t), \ub{\hum}(t)}$
are generated by the same vector fields, so that they must be equal.
However, we have 
\( \dub{\mos}(0)=\dub{\mos}_{0} < {\mos}_{0} = \ub{\mos}(0) \). 
\medskip

To conclude, we have proven that, for any control 
trajectory~$\controlm(\cdot)$ as 
in~\eqref{eq:restriccion-control}, the state trajectory starting 
from any point of the set~$\constraintset\backslash\VV$ does not satisfy the 
constraint~\eqref{eq:restriccion-estado} for a time~$T<+\infty$.
\end{proof}

\subsection{Epidemiological interpretation}

The following theorem summarizes the description of the viability kernel 
depending on whether the upper limit $\ub{\HUM}$ for the proportion 
of infected humans in~\eqref{eq:restriccion-estado} is low, high or medium. 

\begin{theorem}
\label{teo:nucleo-viabilidad}
The viability kernel~$\nucleo$ in~\eqref{eq:nucleo-viabilidad} is as follows.

\begin{description}
\item[L)] 
High constraint, that is, a low threshold of infected humans. If
\begin{equation}
\label{eq:restriccion-fuerte}
\ub{\HUM}<\frac{A_{\hum} - \gamma \ub{\controlm}/A_{\mos} }{A_{\hum} + \gamma }
\eqsepv
\end{equation}
the viability kernel consists only of the origin:
\begin{equation}
\nucleo=\{(0,0)\} \eqfinp
\end{equation}

\item[H)] Weak constraint, that is, a high threshold of infected humans. If
\begin{equation}
\label{eq:restriccion-debil}
\frac{A_{\hum}}{A_{\hum} + \gamma} < \ub{\HUM} \eqfinv
\end{equation}
the viability kernel~$\nucleo$ is the whole 
constraint set, that is,
\begin{equation}
\nucleo=\constraintset=\{(\mos, \hum)|0\leq\mos\leq 1, 0\leq\hum\leq \ub{\HUM} \}
=\rectangle\eqfinp
\end{equation}

\item[M)] Medium constraint, that is, a medium threshold of infected humans. 
If
\begin{equation}
\label{eq:restriccion-media}
0<\frac{A_{\hum} - \gamma \ub{\controlm}/A_{\mos} }{A_{\hum} + \gamma } < \ub{\HUM}< \frac{A_{\hum}}{A_{\hum} + \gamma} \eqfinv
\end{equation}
the viability kernel~$\nucleo$ is a strict subset 
of the constraint set~$\constraintset$
whose upper right frontier is a smooth and decreasing curve,
\begin{subequations} 
\begin{equation}
\nucleo=\left ([0, \ub{\MOS}]\times [0, \ub{\HUM}]\right )\bigcup
\left\{(\mos, \hum)\Big | \ub{\MOS}\leq \mos\leq \MOS_{\infty} \eqsepv 
\hum\leq \solucion(\mos)\right \}\eqfinv
\end{equation}
where $\solucion:m\mapsto \solucion(\mos)$ is the solution to
\begin{eqnarray}
\label{eq:frontera-superior-derecha}
-g_{\mos}(\mos, \solucion(\mos), \ub{\controlm})  
\solucion'(\mos)+g_{\hum}(\mos, \solucion(\mos))= 0\eqfinv \\
\solucion(\ub{\MOS})=\ub{\HUM} \eqfinp
\end{eqnarray}
\label{eq:nucleo-viable-medio}
\end{subequations}

\end{description}

\end{theorem}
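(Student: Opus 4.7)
The plan is straightforward: the theorem is a consolidation, organized as a trichotomy indexed by the magnitude of the threshold $\ub{\HUM}$, of the three main propositions already established in Section~\ref{sec:caracterizacion-nucleo}, namely Propositions~\ref{prop:restriccion-fuerte}, \ref{prop:restriccion-debil}, and \ref{prop:restriccion-media}. Consequently, the proof reduces to matching the hypotheses of each case of the theorem with the hypotheses of the corresponding proposition and invoking that proposition.

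For case (L), I would observe that hypothesis~\eqref{eq:restriccion-fuerte} coincides exactly with hypothesis~\eqref{eq:cota-baja} of Proposition~\ref{prop:restriccion-fuerte}, so the conclusion $\nucleo=\{(0,0)\}$ is immediate. For case (H), hypothesis~\eqref{eq:restriccion-debil} implies (strictly) hypothesis~\eqref{eq:cota-alta} of Proposition~\ref{prop:restriccion-debil}, yielding $\nucleo=\constraintset$.

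For case (M), the chain of strict inequalities~\eqref{eq:restriccion-media} encodes exactly the two hypotheses~\eqref{eq:cota-media_a} and~\eqref{eq:cota-media_b} of Proposition~\ref{prop:restriccion-media}: the leftmost inequality is~\eqref{eq:cota-media_b}, and the remaining chain is~\eqref{eq:cota-media_a}. The conclusion, namely the description of $\nucleo$ as the union of the rectangle $[0,\ub{\MOS}]\times[0,\ub{\HUM}]$ and the hypograph of $\solucion$ on $[\ub{\MOS},\MOS_{\infty}]$ together with the differential characterization of $\solucion$ through~\eqref{eq:frontera-superior-derecha}--\eqref{eq:condicion-inicial-frontera}, is exactly the statement of Proposition~\ref{prop:restriccion-media} together with Lemma~\ref{lema:solucion-frontera-derecha}.

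Since the substantive analytical work (vector field analysis on the frontier, construction and monotonicity of $\solucion$, largest-viability-domain argument via Theorem~\ref{th:Aubin}) has been carried out in the three preceding subsections, there is no real obstacle here; the proof is essentially a one-line invocation in each case. The only thing worth flagging is that the theorem uses strict inequalities in its hypotheses and so does not address the measure-zero transition values of $\ub{\HUM}$ that separate the three regimes, but no claim is made about those boundary cases.
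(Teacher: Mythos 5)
Your proposal is correct and matches the paper exactly: the theorem is presented there as a summary of Propositions~\ref{prop:restriccion-fuerte}, \ref{prop:restriccion-debil} and~\ref{prop:restriccion-media}, with no further argument needed beyond the hypothesis-matching you describe (including the harmless strengthening of~\eqref{eq:cota-alta} to a strict inequality in case~H). Your remark about the untreated boundary values of $\ub{\HUM}$ is accurate and consistent with the paper, which likewise makes no claim there.
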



The results of Theorem~\ref{teo:nucleo-viabilidad} make sense from an
epidemiological point of view. 
\begin{description}
\item[L)] 
One extreme situation is when the maximum proportion~$\ub{\HUM}$ 
of infected humans is low. In that case, 
the imposed constraint not to overshoot~$\ub{\HUM}$ can only be satisfied
when there are no infected humans nor infected mosquitoes at the start. 

\item[H)] 
The other extreme situation is when the maximum proportion~$\ub{\HUM}$ of 
infected humans is high. In that case, we are allowing a large fraction of 
the population to get infected. For any trajectory starting in the so-called 
constraint set (that is, such that the value with which the proportion of 
people starts is located below the maximum proportion), any 
mortality rate due to fumigation will make the proportion of infected
 humans always below~$\ub{\HUM}$ for all times.

\item[M)] 
The most interesting case 
is when the maximum proportion~$\ub{\HUM}$ of infected humans is medium 
(satisfying the condition~\eqref{eq:restriccion-media}). Here, 
the viability kernel is the strict subset of the constraint set whose upper 
frontier matches the state orbit associated with the maximum mortality rate 
due to fumigation (by Lemma~\ref{lema:frontera-derecha}).
\end{description}
Focusing more on this last case, the 
condition~\eqref{eq:restriccion-media} of Theorem~\ref{teo:nucleo-viabilidad}
can be rewritten as follows: 
\begin{equation}
\label{eq:desigualdad-caso-medio}
0< \frac{A_{\hum}}{\gamma+A_{\hum}}-\frac{\gamma}{A_{\mos}(\gamma+A_{\hum})} 
\ub{\controlm}<
\ub{\HUM} < \frac{A_{\hum}}{\gamma+A_{\hum}}\eqfinp
\end{equation}

This new form~\eqref{eq:desigualdad-caso-medio} taken by the 
inequalities~\eqref{eq:restriccion-media} allows us to visualize the 
relation between the maximum proportion~$\ub{\HUM}$ of infected humans 
and the maximum~$\ub{\controlm}$ mortality fumigation rate. 
Figure~\ref{fig:umbral-control} displays the graph of
$\ub{\HUM}$ versus $\ub{\controlm}$. 

\begin{figure}
\centering
\includegraphics[width=13cm,height=7.7cm]{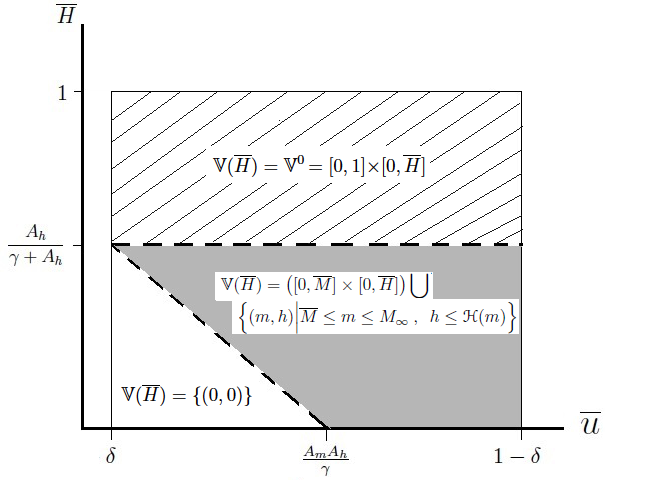}
\caption{Viability kernel for the Ross-MacDonald 
model~\eqref{eq:model-Ross-Macdonald}, 
in function of the cap~$\ub{\HUM}$ on the 
proportion of infected humans and of the maximum mortality 
rate~$\ub{\controlm}$ of mosquitoes}
\label{fig:umbral-control}
\end{figure}

Equalities in the inequalities on both sides 
of~\eqref{eq:desigualdad-caso-medio}
correspond to the two dashed 
straight lines on Figure~\ref{fig:umbral-control}. 
\begin{description}
\item[L)] 
When the couple $(\ub{\controlm}, \ub{\HUM})$ belongs to the 
unshaded bottom triangle 
(corresponding to a low upper bound~$\ub{\HUM}$), 
the viability kernel consists only of the origin. 
\item[H)] 
When the couple $(\ub{\controlm}, \ub{\HUM})$ belongs to the 
upper rectangle shaded with lines 
(corresponding to a high upper bound~$\ub{\HUM}$), 
the viability kernel is the 
whole constraint set $\constraintset=\rectangle$. 
\item[H)] 
When the couple $(\ub{\controlm}, \ub{\HUM})$ belongs to the 
the gray shaded region (corresponding to a medium upper bound~$\ub{\HUM}$),
the viability kernel is a strict subset of the constraint 
set~$\constraintset=\rectangle$, 
whose top right frontier is the smooth curve given 
in~\eqref{eq:frontera-superior-derecha}. 
\end{description}

\subsection{Viable controls}
\label{sec:controles-viables}

After having characterized the viability kernel, 
we turn to discussing so-called \emph{viable controls}.
In viability theory \cite{Aubin1991}, it is proven that any 
control $\controlm \in [\lb{\controlm},\ub{\controlm}]$, 
such that the vector~$\campo$ points towards the inside 
of the viability kernel~$\nucleo$ is \emph{viable} in the following sense:
a trajectory of viable controls and an initial state in~$\nucleo$
produce a state trajectory in~$\nucleo$.

\begin{description}
\item[L)] 
With a strong constraint as in~\eqref{eq:restriccion-fuerte},
it is impossible to keep the proportion of infected humans below~$\ub{\HUM}$
for all times, for \emph{whatever} admissible control 
trajectory~\eqref{eq:restriccion-control}
(except if there are no infected humans or mosquitoes at the start), 
by Proposition~\ref{prop:restriccion-fuerte}.
\item[H)] 
With a weak constraint as in~\eqref{eq:restriccion-debil}, 
the constraint set $\constraintset=[0, 1]\times[0, \ub{\HUM}]$ is strongly
invariant, by Proposition~\ref{prop:restriccion-debil}.
Therefore, \emph{any} admissible control 
trajectory~\eqref{eq:restriccion-control} makes it possible that
the proportion of infected humans remains below~$\ub{\HUM}$
for all times. For instance, the minimum control $\lb{\controlm}=\delta$,
corresponding to natural death rate (hence to no control), is a viable
control.
\item[M)] 
With a medium constraint as in~\eqref{eq:restriccion-media}, 
a viable control is the stationary control $\controlm(\cdot)=\ub{\controlm}$,
corresponding to maximal fumigation. However, other controls might be viable:
it suffices that, on the frontier of the viability kernel~$\nucleo$, 
a control yields a vector~$\campo$ that points towards the inside of~$\nucleo$.
For instance, we could opt for a control which is low when far from the frontier
of the viability kernel~$\nucleo$, 
but reaches~$\ub{\controlm}$ when close to the frontier.
Letting $d(\mos,\hum)$ be the distance between point~$(\mos,\hum)$
and the upper frontier of the viability kernel~$\nucleo$, 
we propose the following viable control for 
problem~\eqref{eq:problema-viabilidad-recursos-ilimitados}:
\begin{equation}
\controlm(t)=\Big( 1-\exp\Big(-d\big(\mos(t),\hum(t)\big)\Big) \Big) 
\lb{\controlm}
+ \exp\Big(-d\big(\mos(t),\hum(t)\big)\Big) \ub{\controlm} \eqfinp
\label{eq:control-viable}
\end{equation}
Figures~\ref{fig: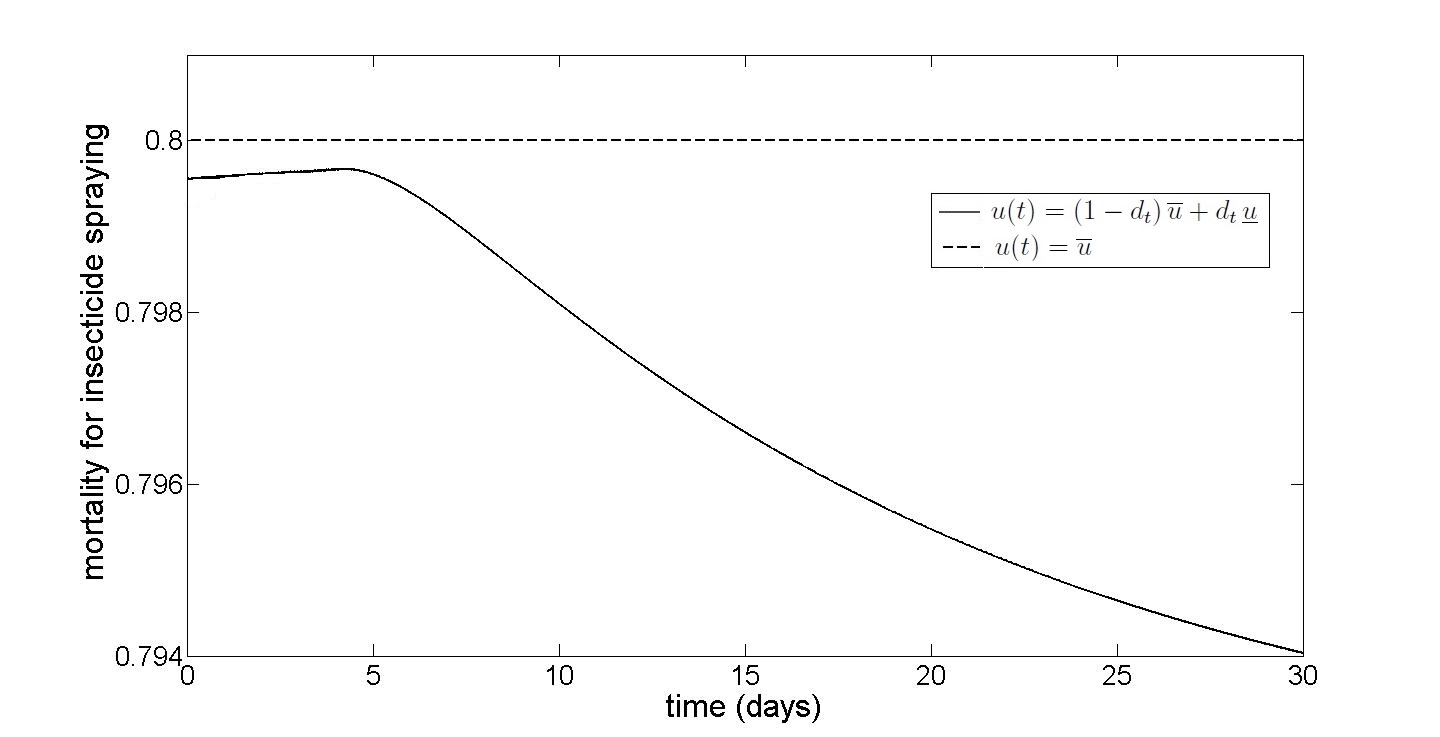} and \ref{fig: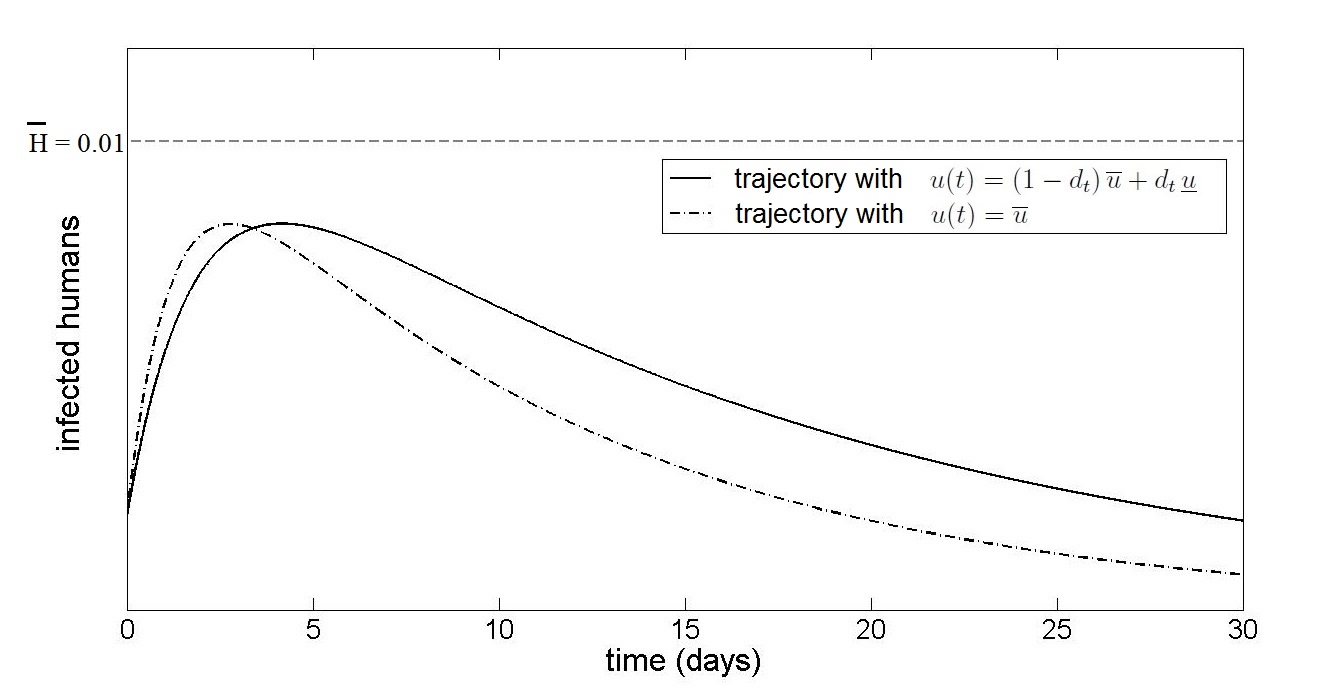}
display examples of viable control and state (infected humans) 
trajectories for the Ross-MacDonald model~\eqref{eq:model-Ross-Macdonald} 
with a proportion of 1\% infected humans not to be exceeded ($\ub{\HUM}=0.01$).
\end{description}

\begin{figure}
\centering
\includegraphics[width=10cm,height=6cm]{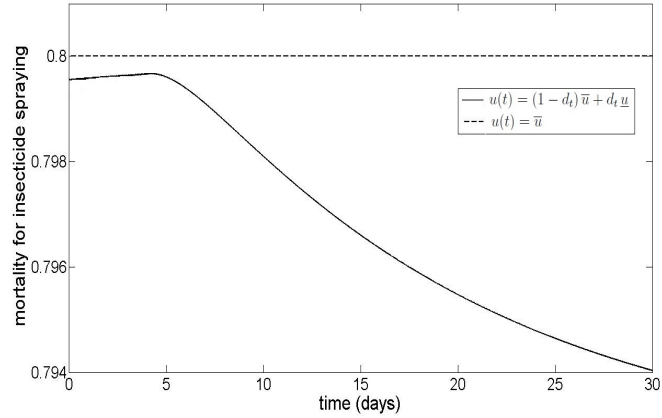}
\caption{Viable control trajectories for the Ross-MacDonald 
model~\eqref{eq:model-Ross-Macdonald} with $\ub{\HUM}=0.01$
\label{fig:controles-viables.jpg}}
\end{figure}

\begin{figure}
\centering
\includegraphics[width=10cm,height=6cm]{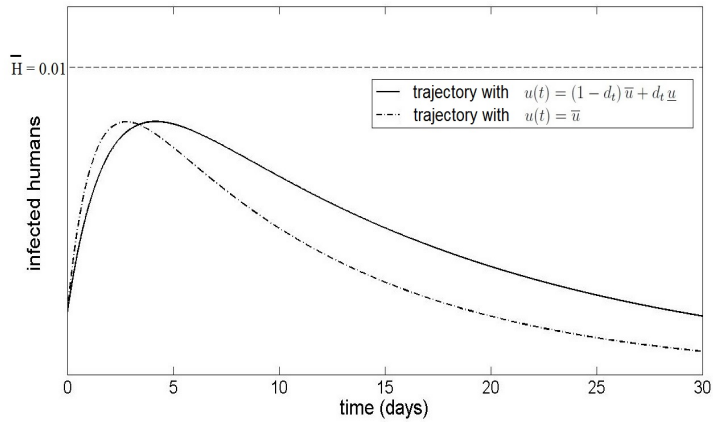}
\caption{Viable state (infected humans) trajectories for the Ross-MacDonald 
model~\eqref{eq:model-Ross-Macdonald} with $\ub{\HUM}=0.01$
\label{fig:infected-viables.jpg}}
\end{figure}

\section{Conclusion}
\label{sec:conclusion-viabilidad}

As said in the introduction, the approach we have developed 
is (to our best knowledge) new in mathematical epidemiology.
Instead of aiming at an equilibrium or optimizing, we have looked for policies
able to maintain the infected individuals below a threshold for all times. 
More precisely, we have allowed for time-dependent fumigation rates 
to reduce the population
of mosquito vector, in order to maintain the proportion of infected 
individuals by dengue below a threshold for all times.
By definition, the viability kernel is
the set of initial states (mosquitoes and infected individuals) for which  
such a fumigation control trajectory exists.

Our theoretical results are the following. 
For the Ross-Macdonald model with vector mortality control,
we have been able to characterize the viability kernel associated with 
the viability constraint that consists in capping 
the proportion of infected humans for all times. 
Depending on whether the cap on the proportion of infected is 
low, high or medium, we have provided different expressions 
of the viability kernel.
We have also characterized so-called viable policies that produce, 
at each time, a fumigation rate as a function of 
current proportions of infected humans and mosquitoes,
such that the proportion of infected humans remains 
below a threshold for all times. 

Regarding the use of our theoretical results,
we have provided a numerical application in the case of the control
of a dengue outbreak in 2013 in Cali, Colombia.  
Indeed, thanks to numerical data yielded by the
Municipal Secretariat of Public Health of Cali, we have produced figures 
of viability kernels and of viable trajectories. 
What our analysis suggests is that, to cap the proportion of infected humans
at the peak, you need to measure the proportion of infected mosquitoes,
at least when it is larger than the characteristic proportion~$\ub{\MOS}$,
defined in~\eqref{eq:hat_mos}. 
Measuring mosquito abundance is a difficult task, whereas measuring
a \emph{proportion} of infected mosquitoes might be done by proper sampling.
Naturally, the weaker the control, 
the more you need to estimate the proportion of infected mosquitoes,
as illustrated in Figure~\ref{fig:nucleo-2013}. 
\begin{figure}
\centering
\includegraphics[width=13cm,height=7.7cm]{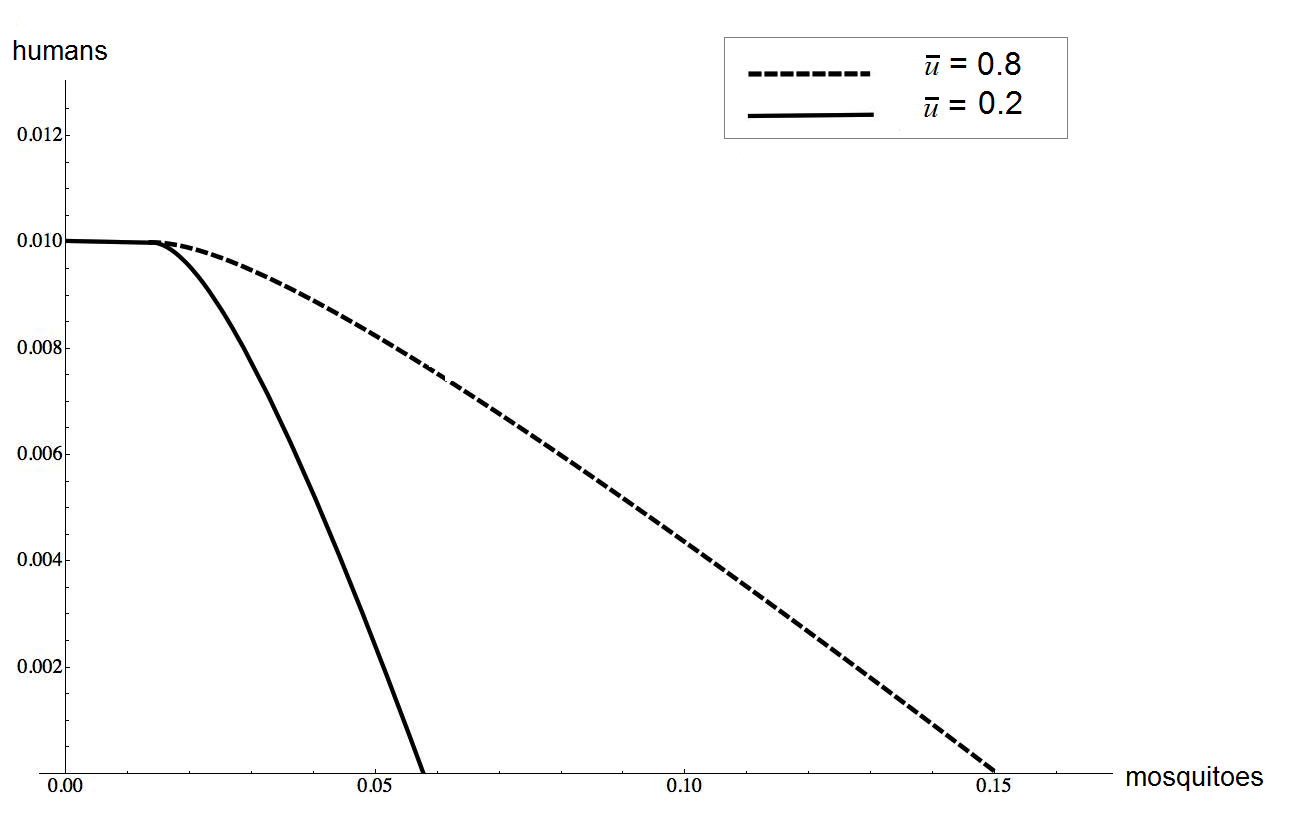}
\caption{Dependence of the viability kernel on the maximal fumigation 
mortality rate~$\ub{\controlm}$}
\label{fig:nucleo-2013}
\end{figure}


\appendix 

\section{Appendix}

\subsection{Recalls on comparison theorems for differential systems}
\label{sec:resultados-teoricos}

\begin{definition}
\label{def:funcion-cuasimonotona}
The function $g:\RR^n\rightarrow \RR^n$ is said to be 
\emph{quasi monotonous} if $g$ is $C^1$ and that 
 \[
 \frac{\partial g^i}{\partial x_j}\geq 0 \eqsepv \forall i \neq j \eqfinp
 \]
\end{definition}

In what follows, all inequalities between vectors 
have to be understood componentwise. 
\begin{theorem}
\label{teo:comparacion-edo}
Let $f$ and $g$ be two vector fields on $D\subset\mathds{R}^n$, 
with $f$ or $g$ {quasi monotonous} and $f\leq g$.
Suppose that the differential systems
\[
\dot{\xx}=f(\xx) \eqsepv \dot{\yy}=g(\yy)\eqfinv
\]
have solutions $t \mapsto \xx_t$ and $t \mapsto \yy_t$ defined 
for all $t \geq 0$. Then, 
if $\xx_0\leq \yy_0$, we have that $\xx_t\leq \yy_t$ for all $t \geq 0$.
\end{theorem}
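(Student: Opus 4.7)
The plan is to reduce to the case where $g$ is quasi monotonous (the other case is completely symmetric by considering $-f, -g$ and reversing inequalities, or by an analogous perturbation argument on $\xx$) and then run a standard strict-perturbation argument. The starting point is the observation that componentwise inequalities are preserved under smooth flows only when one can push the defect $\yy_t - \xx_t$ away from zero a little at the outset; so I would introduce, for $\epsilon > 0$, the perturbed system $\dot{\yy}^{\epsilon} = g(\yy^{\epsilon}) + \epsilon \mathbf{1}$ with $\yy^{\epsilon}_0 = \yy_0 + \epsilon \mathbf{1}$, where $\mathbf{1} = (1,\dots,1)$. By continuous dependence on initial conditions and on the vector field, $\yy^{\epsilon}_t \to \yy_t$ uniformly on compact time intervals as $\epsilon \downarrow 0$, so it will suffice to prove the \emph{strict} inequality $\xx_t < \yy^{\epsilon}_t$ componentwise for all $t \ge 0$ and then let $\epsilon \to 0$.

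To prove strict inequality, I would argue by contradiction. Suppose there is some first time
\[
t^{*} = \inf\{ t \ge 0 \mid \exists\, i,\ \xx^{i}_t \ge \yy^{\epsilon,i}_t \}
\]
at which the inequality breaks. Since $\xx_0 < \yy^{\epsilon}_0$ componentwise and both trajectories are continuous, $t^{*} > 0$, and by continuity there exists an index $i$ with $\xx^{i}_{t^{*}} = \yy^{\epsilon,i}_{t^{*}}$ while $\xx^{j}_{t^{*}} \le \yy^{\epsilon,j}_{t^{*}}$ for every $j$. The $C^{1}$ function $t \mapsto \yy^{\epsilon,i}_t - \xx^{i}_t$ is strictly positive on $[0,t^{*})$ and vanishes at $t^{*}$, hence its derivative at $t^{*}$ is nonpositive, giving $\dot{\yy}^{\epsilon,i}_{t^{*}} \le \dot{\xx}^{i}_{t^{*}}$.

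Now I would combine three inputs at the time $t^{*}$. Using the hypothesis $f \le g$, $\dot{\xx}^{i}_{t^{*}} = f^{i}(\xx_{t^{*}}) \le g^{i}(\xx_{t^{*}})$. Using quasi monotonicity of $g$ (Definition~\ref{def:funcion-cuasimonotona}) together with $\xx^{j}_{t^{*}} \le \yy^{\epsilon,j}_{t^{*}}$ for $j \ne i$ and the equality $\xx^{i}_{t^{*}} = \yy^{\epsilon,i}_{t^{*}}$, the fundamental theorem of calculus along the segment joining the two points (on which only the coordinates $j \ne i$ vary) yields $g^{i}(\xx_{t^{*}}) \le g^{i}(\yy^{\epsilon}_{t^{*}})$. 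Finally $\dot{\yy}^{\epsilon,i}_{t^{*}} = g^{i}(\yy^{\epsilon}_{t^{*}}) + \epsilon$. Chaining these gives $\dot{\xx}^{i}_{t^{*}} \le g^{i}(\yy^{\epsilon}_{t^{*}}) < \dot{\yy}^{\epsilon,i}_{t^{*}}$, contradicting $\dot{\yy}^{\epsilon,i}_{t^{*}} \le \dot{\xx}^{i}_{t^{*}}$. Hence $\xx_t < \yy^{\epsilon}_t$ for all $t \ge 0$, and passing to the limit $\epsilon \downarrow 0$ closes the proof.

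The main obstacle is the quasi monotonicity step: it is where one really needs $\partial g^{i}/\partial x_{j} \ge 0$ for $j \ne i$, because the monotone comparison of $g^{i}(\xx_{t^{*}})$ and $g^{i}(\yy^{\epsilon}_{t^{*}})$ only uses the coordinates in which $\xx \le \yy^{\epsilon}$ strictly, whereas the coordinate $i$ along which equality holds plays no role. A secondary, more technical subtlety is to guarantee that the perturbed solution $\yy^{\epsilon}$ is actually defined on the same time interval as $\yy$; this can be handled by local existence together with the linear growth / Gronwall-type bounds that already ensure global existence of $\yy$, and by taking $\epsilon$ small enough that $\yy^{\epsilon}$ remains in a common compact set on any prescribed finite horizon.
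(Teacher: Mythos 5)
Your proof is correct and follows essentially the same route as the paper: an $\epsilon$-perturbation of the $\yy$-system to force strict inequalities, a first-crossing-time contradiction exploiting quasi monotonicity of $g$ together with $f\leq g$, and a passage to the limit $\epsilon\downarrow 0$. The only difference is organizational --- the paper first proves the strict case $f<g$, $\xx_0<\yy_0$ as a separate step and then perturbs, whereas you perturb at the outset and argue directly on $\yy^{\epsilon}$ --- but the key inequalities at the crossing time are identical.
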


\begin{proof}
We consider two cases. 
\begin{enumerate}[(a)]
\item 
Suppose that $f<g$ --- that is, $f^i<g^i$ for all $i=1,\dots,n$ ---
and that $\xx_0<\yy_0$. We define 
\[
\tau=\inf \{t\geq 0 \mid \exists i=1,\dots,n \eqsepv \xx_t^i>\yy_t^i\}\eqfinp
\]
We show that $\tau=+\infty$, that is, $\xx_t\leq \yy_t$ for all $t \geq 0$.
Indeed, let us suppose the contrary. 
If $\tau<+\infty$, then there exists at least one $i=1,\dots,n$ such that
\[
\xx^i_{\tau}=\yy^i_{\tau} \eqsepv \xx^j_{\tau}\leq \yy^j_{\tau}
\eqsepv \forall j \neq i \eqfinp
\]
Supposing that $g$ is {quasi monotonous} (the proof is similar
when $f$ is {quasi monotonous}), 
we deduce that $g^i(\xx_{\tau})\leq g^i(\yy_{\tau})$.

Moreover, as $\xx_t^i>\yy_t^i$, for all $t\in ]\tau, \tau+\epsilon[$
(for $\epsilon$ small enough) 
and $\xx^i_{\tau}=\yy^i_{\tau} $, we have that 
\[
f^i(\xx_{\tau}) = \frac{d \xx_t^i}{dt}_{|t=\tau}
\geq \frac{d \yy_t^i}{dt}_{|t=\tau} = g^i(\yy_{\tau}) \eqfinp
\]
From $g^i(\xx_{\tau})\leq g^i(\yy_{\tau})$, $ g^i(\yy_{\tau}) \leq f^i(\xx_{\tau}) $
and $f^i<g^i$, we deduce that
\[
g^i(\xx_{\tau})\leq g^i(\yy_{\tau})\leq f^i(\xx_{\tau})<g^i(\xx_{\tau})\eqfinp
\]
This is contradictory. As a consequence, 
$\xx_0< \yy_0$ implies that $\xx_t\leq \yy_t$ for all $t \geq 0$.

\item 
Suppose that $f \leq g$ and $\xx_0 \leq \yy_0$.
For any $\epsilon >0$, denote by $\yy_t^{\epsilon}$ the solution of 
\[
\dot{\yy^{\epsilon}}=g(\yy^{\epsilon})+\epsilon
\eqsepv \yy^{\epsilon}_0=\yy_0+\epsilon\eqfinp
\]
We have that 
\[
\xx_0<\yy^{\epsilon}_0 \eqsepv f<g+\epsilon\eqfinp
\]
Therefore, we can conclude from the previous item that
$\xx_t\leq \yy^{\epsilon}_t$ for all $t \geq 0$.
It is well known that, for fixed~$t$, when $\epsilon\downarrow 0$, we have that 
$\yy^{\epsilon}_t\rightarrow \yy_t$.

As a consequence, 
$\xx_0\leq \yy_0$ implies that $\xx_t\leq \yy_t$ for all $t \geq 0$.
\end{enumerate}

\end{proof}

The following extension is immediate.

\begin{theorem}
\label{teo:comparacion-edo-extension}
Let $(f_t)_{t\geq 0}$ and $(g_t)_{t\geq 0}$ be two families
of vector fields on $D\subset\mathds{R}^n$, 
with $f_t$ or $g_t$ {quasi monotonous} and $f_t\leq g_t$,
for all~$t\geq 0$.
Suppose that the differential systems
\[
\dot{\xx}=f_t(\xx) \eqsepv \dot{\yy}=g_t(\yy)\eqfinv
\]
have solutions $t \mapsto \xx_t$ and $t \mapsto \yy_t$ defined 
for all $t \geq 0$. Then, 
if $\xx_0\leq \yy_0$, we have that $\xx_t\leq \yy_t$ for all $t \geq 0$.
\end{theorem}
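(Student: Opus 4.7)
The plan is to mimic the proof of Theorem~\ref{teo:comparacion-edo} step by step, since the only new ingredient is that the vector fields depend on~$t$, and the key contradiction in the autonomous argument is derived at a \emph{single} time~$\tau$, where only the instantaneous values $f_\tau$ and $g_\tau$ intervene. Quasi-monotonicity is used pointwise in~$t$, so the same machinery carries over without change.

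First, I would treat the strict case: assume $f_t < g_t$ componentwise for every $t\geq 0$, and $\xx_0 < \yy_0$. Define
\[
\tau = \inf \{t\geq 0 \mid \exists i=1,\dots,n \eqsepv \xx_t^i>\yy_t^i\} \eqfinp
\]
Suppose for contradiction that $\tau<+\infty$; by continuity of the solutions, there exists an index~$i$ with $\xx_\tau^i = \yy_\tau^i$ and $\xx_\tau^j \leq \yy_\tau^j$ for all $j\neq i$, together with
\[
f_\tau^i(\xx_\tau) = \frac{d\xx_t^i}{dt}\Big|_{t=\tau} \geq \frac{d\yy_t^i}{dt}\Big|_{t=\tau} = g_\tau^i(\yy_\tau) \eqfinp
\]
Applying quasi-monotonicity of $g_\tau$ (or symmetrically of $f_\tau$, if it is the one assumed quasi-monotonous) gives $g_\tau^i(\xx_\tau)\leq g_\tau^i(\yy_\tau)$, and combining with $f_\tau^i<g_\tau^i$ yields the chain
\[
g_\tau^i(\xx_\tau)\leq g_\tau^i(\yy_\tau)\leq f_\tau^i(\xx_\tau)<g_\tau^i(\xx_\tau) \eqfinv
\]
a contradiction. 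Hence $\tau=+\infty$ and $\xx_t\leq \yy_t$ for all $t\geq 0$.

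Second, I would reduce the case $f_t\leq g_t$, $\xx_0\leq \yy_0$ to the strict one by the same perturbation trick as in the proof of Theorem~\ref{teo:comparacion-edo}. For each $\epsilon>0$, let $\yy^\epsilon$ solve $\dot\yy^\epsilon = g_t(\yy^\epsilon)+\epsilon$ with $\yy^\epsilon_0 = \yy_0+\epsilon$; then $\xx_0<\yy^\epsilon_0$ and $f_t<g_t+\epsilon$, so the strict case gives $\xx_t\leq \yy^\epsilon_t$ for every $t\geq 0$. Letting $\epsilon\downarrow 0$ and invoking continuous dependence of ODE solutions on the vector field and on initial data yields $\xx_t\leq \yy_t$ in the limit.

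The only point that requires any attention, and that explains why the author merely calls the extension ``immediate'' rather than trivial, is the regularity of $t\mapsto f_t$ and $t\mapsto g_t$ needed for (a) global existence of $\yy^\epsilon$ and (b) the convergence $\yy^\epsilon_t\to \yy_t$; a standard Gronwall argument, under local Lipschitz continuity of the fields in~$\xx$ and local boundedness in~$t$, disposes of both at once. No genuine new obstacle appears beyond what was already handled in the autonomous case.
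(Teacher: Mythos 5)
Your proposal is correct and takes exactly the route the paper intends: the paper dispatches this extension with the single remark that it is ``immediate'' from the proof of Theorem~\ref{teo:comparacion-edo}, and your write-up simply makes explicit why --- the contradiction at the first crossing time~$\tau$ involves only the instantaneous fields $f_\tau$, $g_\tau$, and the $\epsilon$-perturbation step carries over verbatim. Your closing remark on the regularity needed for the limit $\yy^{\epsilon}_t \to \yy_t$ is a reasonable (and slightly more careful) addition, but it does not change the argument.
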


%
%
%
%
%
%
%
%
%
%
%

\subsection{Epidemic model adjusted to 2013 dengue outbreak in Cali, Colombia}

\def\zz{z}
\def\thetab{\theta}
The city of Cali, in Colombia, has witnessed several episodes of dengue,
as displayed in Figure~\ref{fig:brotes-cali}. 
\begin{figure}
\centering
\includegraphics[width=13cm,height=7.7cm]{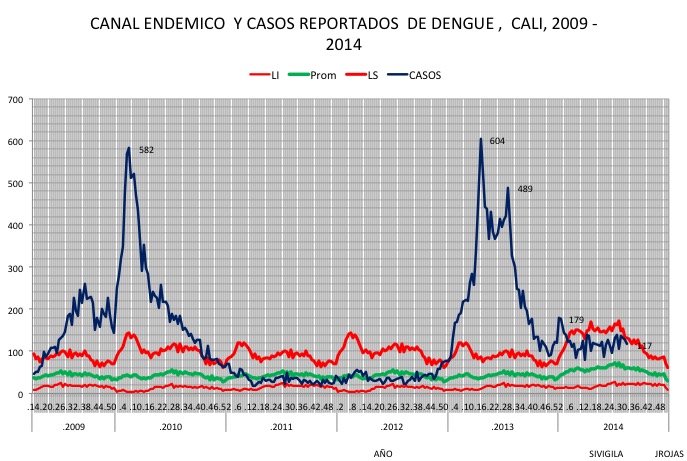}
\caption{Number of infected by dengue, revealing several episodes of dengue 
in the city of Cali, in Colombia}
\label{fig:brotes-cali}
\end{figure}
The Municipal Secretariat of Public Health of Cali provided us with data
corresponding to the dengue outbreak registered in 2013.
Here, we present how we have estimated the parameters in 
the Ross-Macdonal model~\eqref{eq:model-Ross-Macdonald} ---
using the information of daily reports for new registered cases of dengue in Cali --- to obtain different figures of viability kernels and of viable trajectories.

\subsubsection*{Definition of parameters} 

We introduce the state vector 
\begin{equation}
  \zz(t)= \big( \mos(t), \hum(t) \big) \in [0,1]^2 \eqfinv
\end{equation}
and the vector of parameters
\begin{equation}
\thetab=\big( \alpha, p_{\hum}, p_{\mos}, \xi, \delta \big) \in \Theta 
 \subset \mathbb{R}^5_+  \eqfinv
\label{eq:parameter}
\end{equation}
consisting of the five parameters previously defined in 
Table~\ref{tabla:parametros-Ross}. The parameter set 
$\Theta \subset \mathbb{R}^5_+$ is given by the Cartesian product of the 
five intervals in the third column of Table~\ref{tab-A1}. 

With these notations, the Ross-Macdonal model~\eqref{eq:model-Ross-Macdonald}
now writes
\begin{equation}
\label{RMmodel-par}
\begin{array}{rclcl}
\dfrac{d}{dt} \zz(t;\thetab) & = & \ff \big( \zz(t;\thetab),\thetab \big) & & t> 0\\
            \zz(0; \thetab)    & = & \zz_0                                  & & \zz_0 = ( \mos_0, \hum_0)^{\prime}
\end{array}
\end{equation}
where
\begin{equation}
\ff (\zz,\thetab) = \left( \begin{array}{c}
f_1 (\zz,\thetab) \\ f_2 (\zz,\thetab)
\end{array} \right)  = \left( \begin{array}{c}
\alpha\, p_{\mos}\,\hum (1-\mos) - \delta\, \mos \\ \alpha\, p_{\hum}\, \xi \, \mos (1-\hum)-\gamma\, \hum
\end{array} \right) \eqfinv \gamma=0.1 \eqfinp 
\end{equation}

\subsubsection*{Daily data deduced from health reports}

Notice that the rate~$\gamma$ of human recovery 
does not appear in the parameter vector~$\thetab$
in~\eqref{eq:parameter}. Indeed, in the data we only have new cases of dengue 
registered per day; there is no information regarding how many people recover 
daily. We chose an infectiousness period of~10 days, that is, 
a rate of human recovery fixed at $\gamma=0.1$. 
Under this assumption, the \emph{daily incidence data} 
(i.e., numbers of newly registered cases reported on daily basis) 
provided by the Municipal Secretariat of Public Health (Cali, Colombia) 
can be converted into the \emph{daily prevalence data}
(i.e., numbers of infected people on a given day, be they new or not).
With this, we deduce values of daily proportion of infected people 
in the form of the set
\begin{equation}
\label{dataset}
\mathbb{O} = \Big\{ \big( t_j, \widehat{\hum}_j \big) \eqsepv 
j =0,1, \ldots, \mathcal{D} \Big\} \eqfinv 
\end{equation}
where $t_j$ refers to $j$-th day, within the observation period 
of~$(\mathcal{D}+1)$ days, and where $\widehat{\hum}_j$ stands for the 
fraction of infected people at the day~$t_j$. Naturally, the first couple 
in the set~\eqref{dataset} defines the initial condition $\hum(t_0)=\widehat{\hum}_0$ with $t_0=0$ (initial observation day). 
Unfortunately, there is no available data for the fraction of infected 
mosquitoes. As mosquito abundance is strongly correlated with dengue outbreaks \cite{Jansen2010}, we have chosen a linear relation 
$\mos(0)= 3 \widehat{\hum}_0$ at the beginning of an epidemic outburst
(other choices gave similar numerical results).

\subsubsection*{Parameter estimation procedure}

To estimate a parameter vector~$\thetab \in \Theta$ that suits with the data, 
we have applied the curve-fitting approach based on least-square method.
More precisely, we look for an optimal solution to 
the problem of constrained optimization 
\begin{equation}
\label{min}
\min \limits_{\thetab \in \Theta} \varphi (\thetab) = \frac{1}{2} \sum \limits_{j=1}^\mathcal{D}\, \big( \hum(t_j,\thetab)-\widehat{\hum}_j \big)^2,  \quad \mathcal{D}=60 \; \mbox{days} \eqfinv
\end{equation}
subject to the differential constraint \eqref{RMmodel-par}. 
Regarding numerics, we have solved this optimization problem with the 
\texttt{lsqcurvefit} routine (MATLAB Optimization Toolbox),
starting with an admissible $\thetab_0 \in \Theta$ (see its exact value in Table~\ref{tab-A1}, second column). The routine generates a sequence 
$\{ \thetab_1, \thetab_2, \ldots \}$ that we stop once it is stationary, 
up to numerical precision.
For a better result, we have combined two particular methods 
(Trust-Region-Reflective Least Squares Algorithm \cite{Sorensen1982} and 
Levenberg-Marquardt Algorithm \cite{More1978}) in the implementation of 
the \texttt{lsqcurvefit} MATLAB routine.

\begin{table}[t]
\begin{center}
\begin{tabular}{|c|c|c|c|c|}\hline
Parameter &  Initial value & Range & Reference & Estimated value \\ \hline

$\alpha$            &  1     & $[0, 5]$ & \cite{Costero1998,Scott2000b} & 0,3365 \\ \hline
$p_{\mos}$          &  0.5   & $[0, 1]$ &  & 0,1532 \\ \hline
$p_{\hum}$          &  0.5   & $[0, 1]$ &  & 0,2287 \\ \hline
$\xi$               &  1     & $[1, 5]$ & \cite{Mendez2006,Scott2000a} & 1,0359 \\ \hline
$\delta$            &  0.035 & $\left[ \frac{1}{30}, \frac{1}{15} \right]$ & \cite{Costero1998,Scott2000a} & 0.0333 \\ \hline
\end{tabular}
\end{center}
\caption{Initial values, admissible ranges, respective source references, and estimated values of parameters (numerical solution of optimization problem \eqref{min}).}
\label{tab-A1}
\end{table}

The last column of Table~\ref{tab-A1} provides estimated values for the
parameters $\thetab= \big( \alpha, p_{\hum}, p_{\mos}, \xi, \delta \big)$,
and Figure~\ref{fig-A1} displays the curve-fitting results. 
  \begin{figure}
  	\begin{center}
	   \includegraphics[width=14cm,height=7cm]{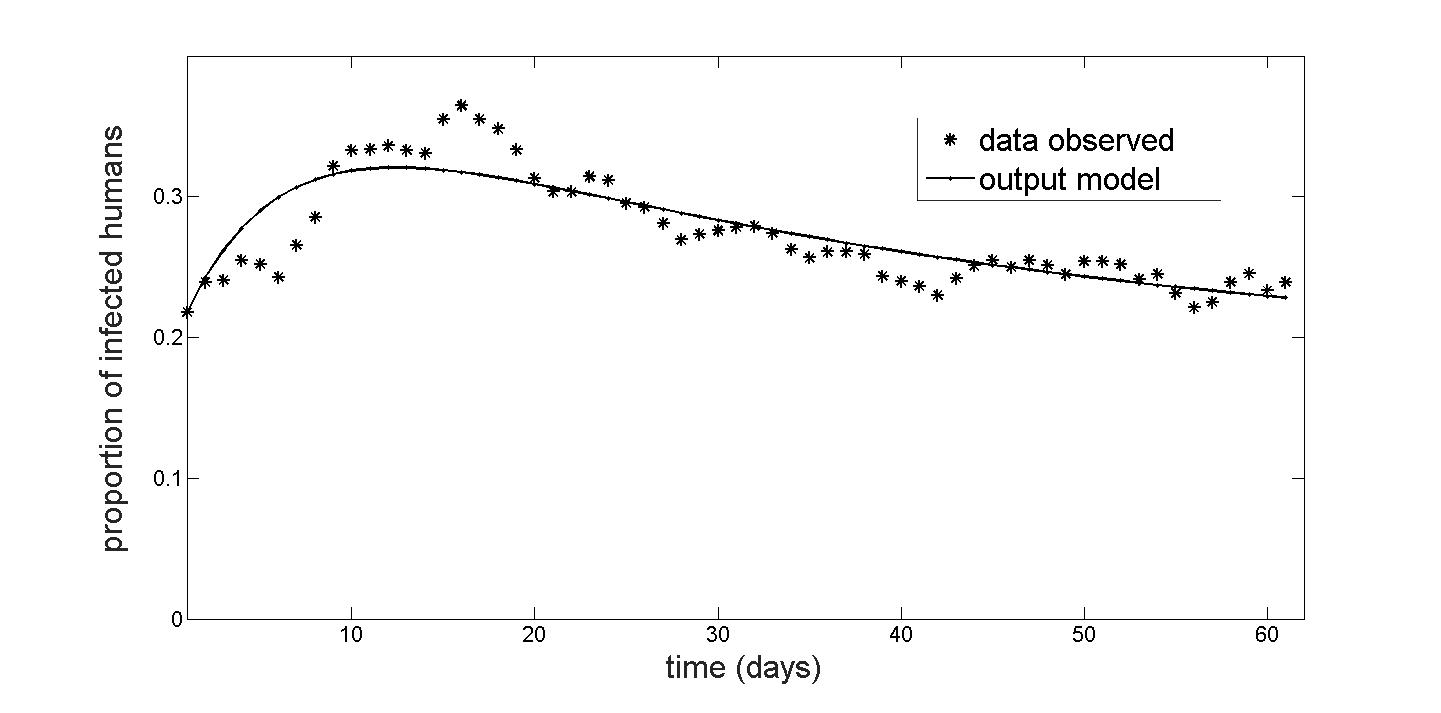}
	\end{center}
	\caption{Fraction of people infected with dengue obtained by 
adjustment of the Ross-Macdonal model~\eqref{eq:model-Ross-Macdonald} 
(smooth solid curve) versus registered daily prevalence cases 
(star isolated points) during the 2013 dengue outbreak in Cali, Colombia
\label{fig-A1}}
  \end{figure}

\paragraph*{Acknowledgments.}

The authors thank the French program PEERS-AIRD 
(\emph{Modèles d'optimisation et de viabilité en écologie et en économie})
and the Colombian Programa Nacional de Ciencias Básicas COLCIENCIAS
(\emph{Modelos y métodos matemáticos para el co trol y vigilancia del dengue},
código 125956933846)
that offered financial support for missions, together with 
\'Ecole des Ponts ParisTech (France), 
Universit\'e Paris-Est (France), 
Universidad Aut\'onoma de Occidente (Cali, Colombia)
and Universidad del Valle (Cali, Colombia).

\newcommand{\noopsort}[1]{} \ifx\undefined\allcaps\def\allcaps#1{#1}\fi

\end{document}